\title{Integer-valued polynomials on subsets of quaternion algebras}
\author{Nicholas J. Werner}
\address{Department of Mathematics, Computer and Information Science, SUNY at Old Westbury, Old Westbury, NY 11568,USA}
\email{wernern@oldwestbury.edu}
\numberwithin{equation}{section}
\theoremstyle{definition}\newtheorem{Def}[equation]{Definition}
\theoremstyle{plain}\newtheorem{Lem}[equation]{Lemma}
\theoremstyle{plain}\newtheorem{Prop}[equation]{Proposition}
\theoremstyle{plain}\newtheorem{Thm}[equation]{Theorem}
\theoremstyle{plain}\newtheorem{Cor}[equation]{Corollary}
\theoremstyle{plain}
\theoremstyle{remark}\newtheorem{Rem}[equation]{Remark}
\theoremstyle{definition}\newtheorem{Ex}[equation]{Example}
\theoremstyle{definition}
\theoremstyle{definition}
\theoremstyle{definition}\newtheorem{Alg}[equation]{Algorithm}
\newcommand{\bfh}{\mathbf{h}}
\newcommand{\bfi}{\mathbf{i}}
\newcommand{\bfj}{\mathbf{j}}
\newcommand{\bfk}{\mathbf{k}}
\newcommand{\Int}{{\rm Int}}
\newcommand{\smat}[4]{[\begin{smallmatrix} #1 & #2 \\ #3 & #4 \end{smallmatrix}]}
\newcommand{\mcC}{\mathcal{C}}
\newcommand{\mcN}{\mathcal{N}}
\newcommand{\D}{\mathbb{D}}
\newcommand{\F}{\mathbb{F}}
\newcommand{\HQ}{\mathbf{H}}
\newcommand{\LQ}{\mathbf{L}}
\newcommand{\Q}{\mathbb{Q}}
\newcommand{\Z}{\mathbb{Z}}
\begin{document}

\maketitle

\begin{center}\today\end{center}

\begin{abstract}
Let $R$ be either the ring of Lipschitz quaternions, or the ring of Hurwitz quaternions. Then, $R$ is a subring of the division ring $\D$ of rational quaternions. For $S \subseteq R$, we study the collection $\Int(S,R) = \{f \in \D[x] \mid f(S) \subseteq R\}$ of polynomials that are integer-valued on $S$. The set $\Int(S,R)$ is always a left $R$-submodule of $\D[x]$, but need not be a subring of $\D[x]$. We say that $S$ is a ringset of $R$ if $\Int(S,R)$ is a subring of $\D[x]$. In this paper, we give a complete classification of the finite subsets of $R$ that are ringsets.
\end{abstract}

\section{Introduction}
\thispagestyle{empty}

In this paper, we study integer-valued polynomials on subsets of noncommutative rings. In the commutative setting, these rings emerge by letting $D$ be a (commutative) integral domain with fraction field $K$, taking $S \subseteq D$, and defining
\begin{equation*}
\Int(S,D) := \{f \in K[x] \mid f(S) \subseteq D\},
\end{equation*}
which is the ring of integer-valued polynomials sending $S$ into $D$. When $S=D$, we let $\Int(D):=\Int(D,D)$. Work on the ring-theoretic aspects of $\Int(S,D)$ dates to at least the 1970s. The book \cite{CahenChabert} is the standard reference for results in this field up to the late 1990s.

During the past fifteen years, attention has turned to similar constructions over noncommutative rings. Integer-valued polynomials have been studied over various quaternion algebras \cite{CigliolaLoper, CigliolaSpirito, JohnsonPav, WernerQuaternion, WernerSurvey}, matrix algebras \cite{Frisch2013, NagRisHaf2017, Peruginelli, HafNagRis2020, HafNagSak2019, HafNag2021,  WernerMatrix}, and rings of upper triangular matrices \cite{EvrardFaresJohnson, Frisch2017, NagHaf2024}. When $D$ is a commutative domain, it is clear that $\Int(D)$ and $\Int(S,D)$ are subrings of $K[x]$. However, this may not be the case when working with polynomials with noncommuting coefficients; see Example \ref{ex:not a ringset} below.

Let $B$ be a noncommutative ring. We will follow standard conventions for working with the noncommutative polynomial ring $B[x]$ as in \cite[\S 16]{Lam}. 
In $B[x]$, polynomials are added and multiplied as in the commutative case, and the indeterminate $x$ is central in $B[x]$. The main difference from the commutative setting is that we will assume polynomials satisfy right evaluation, i.e.\ that before a polynomial $f \in B[x]$ can be evaluated, it must be written as $f(x) = \sum_i a_i x^i$, where the indeterminate appears to the right of any coefficients. Because of this, evaluation at an element of $B$ may not define a multiplicative map $B[x] \to B$. For instance, let $a, b \in B$, $f(x)=ax$, and $g(x)=bx$. Denote the product of $f$ and $g$ in $B[x]$ by $(fg)(x)$. Then, $(fg)(x)=abx^2$, so $(fg)(\alpha) = ab\alpha^2$ for $\alpha \in B$, and this may fail to equal $f(\alpha)g(\alpha)=(a\alpha)(b\alpha)$. Note that if $\alpha \in B$ is central, then it is true that $(fg)(\alpha)=f(\alpha)g(\alpha)$ for all $f, g \in B[x]$. Also, if each coefficient of $g$ is central in $B$, then $(gf)(\alpha) = (fg)(\alpha)=f(\alpha)g(\alpha)$ for all $f \in B[x]$ and all $\alpha \in B$.

When $A$ is a subring of $B$ and $S \subseteq A$, we can define the sets of polynomials
\begin{equation*}
\Int(S,A) := \{f \in B[x] \mid f(S) \subseteq A\}
\end{equation*}
and $\Int(A):=\Int(A,A)$ as in the commutative case. It is straightforward to check that $\Int(S,A)$ is a left $A$-module, but $\Int(S,A)$ may not be closed under multiplication, and hence may fail to be a ring. One of the basic problems in the study of noncommutative integer-valued polynomials is to determine when $\Int(S,A)$ is a ring. For this, we focus on the case where $A$ is an associative, torsion-free $D$-algebra such that $A \cap K = D$, and we take $B = K \otimes_D A$, which is the extension of $A$ to a $K$-algebra.

\begin{Def}\label{def:ringset}
A subset $S \subseteq A$ is called a \textit{ringset} of $A$ if $\Int(S,A)$ is a ring.
\end{Def}

\begin{Ex}\label{ex:not a ringset}\cite[Ex.\ 39]{WernerSurvey} 
Assume that $D$ is a commutative Noetherian domain, and that $A$ is a noncommutative $D$-algebra. Choose $\alpha, \beta \in A$ such that $\alpha\beta \ne \beta\alpha$. We show that $S=\{\alpha\}$ is not a ringset. Since $D$ is Noetherian, there exists $d \in D\setminus\{0\}$ such that $\alpha\beta-\beta\alpha\notin dA$. Take $f(x)=(x-\alpha)/d$ and $g(x)=x-\beta$. Both polynomials are in $\Int(S,A)$, but $(fg)(x) = (x^2 - (\alpha+\beta)x + \alpha\beta)/d$, and $(fg)(\alpha) = (\alpha\beta-\beta\alpha)/d \notin A$. Thus, $fg \notin \Int(S,A)$, $\Int(S,A)$ is not a ring, and $S$ is not a ringset.
\end{Ex}

In contrast to the situation with singleton sets, there are many cases for which $A$ itself is a ringset.

\begin{Thm}\label{thm:sums of units}\cite[Thm.\ 1.2]{WernerMatrix}
Let $A$ and $B$ be as above. Assume that each $\alpha \in A$ can be written as a finite sum $\alpha = \sum_i c_i u_i$, where $c_i, u_i \in A$ are such that each $c_i$ is central in $B$ and each $u_i$ is a unit of $A$. Then, $\Int(A)$ is a ring.
\end{Thm}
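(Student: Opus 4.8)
The plan is to show that $\Int(A)$ is closed under multiplication; since it is already known to be a left $A$-submodule of $B[x]$ and it contains the constant polynomial $1$, closure under multiplication makes it a subring of $B[x]$, the remaining axioms being inherited. So I fix $f, g \in \Int(A)$, write $f(x) = \sum_j a_j x^j$, fix $\alpha \in A$, and aim to show $(fg)(\alpha) \in A$.

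The first step is an elementary identity that isolates the noncommutativity. Because $x$ is central in $B[x]$, expanding $fg$ and evaluating at $\alpha$ yields
\begin{equation*}
(fg)(\alpha) \;=\; \sum_j a_j\, g(\alpha)\, \alpha^j .
\end{equation*}
Setting $\gamma := g(\alpha)$, we have $\gamma \in A$ since $g \in \Int(A)$, so it suffices to prove the pointwise statement: for every $f \in \Int(A)$ and all $\gamma, \alpha \in A$, the element $\sum_j a_j \gamma \alpha^j$ lies in $A$. When $f$ has central coefficients this is immediate, as the sum collapses to $\gamma f(\alpha)$; the content lies in the general case, where $\gamma$ is wedged between the coefficients of $f$ and the powers of $\alpha$.

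For the pointwise statement I would first treat the case $\gamma = cu$ with $c \in A$ central in $B$ and $u$ a unit of $A$. The conjugation identity $u\alpha^j = (u\alpha u^{-1})^j u$ gives
\begin{equation*}
\sum_j a_j\, u\, \alpha^j \;=\; f\!\left(u\alpha u^{-1}\right) u ,
\end{equation*}
and since $u\alpha u^{-1} \in A$ and $f \in \Int(A)$, the right-hand side lies in $A$; left-multiplying by the central element $c \in A$ then shows $\sum_j a_j (cu)\alpha^j = c\, f\!\left(u\alpha u^{-1}\right) u \in A$. For a general $\gamma \in A$, I invoke the hypothesis to write $\gamma = \sum_k c_k u_k$ with each $c_k \in A$ central in $B$ and each $u_k$ a unit of $A$; since $\gamma \mapsto \sum_j a_j \gamma \alpha^j$ is additive, this yields $\sum_j a_j \gamma\alpha^j = \sum_k c_k\, f\!\left(u_k\alpha u_k^{-1}\right) u_k$, a finite sum of elements of $A$. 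This establishes the pointwise claim, hence the theorem.

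I expect no serious obstacle: the two computational facts that carry the argument are the identity $(fg)(\alpha) = \sum_j a_j g(\alpha)\alpha^j$ and the conjugation identity $\sum_j a_j u\alpha^j = f\!\left(u\alpha u^{-1}\right)u$, both of which are short. The only point needing care is conceptual — recognizing that the sum-of-(central)(unit) hypothesis should be applied to the \emph{value} $g(\alpha)$ rather than to the coefficients of $g$ — together with routine attention to the order of multiplication.
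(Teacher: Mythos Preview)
Your argument is correct. The paper does not supply its own proof of this theorem; it merely cites the result from \cite{WernerMatrix}, so there is no in-paper proof to compare against. That said, your approach---reducing to the identity $(fg)(\alpha)=\sum_j a_j\,g(\alpha)\,\alpha^j$ and then handling the sandwiched factor $g(\alpha)$ via the sum-of-central-times-unit decomposition together with the conjugation identity $\sum_j a_j u\alpha^j = f(u\alpha u^{-1})u$---is exactly the standard one and matches the argument in the cited source.
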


Theorem \ref{thm:sums of units} applies to many types of $D$-algebras, including matrix rings and group algebras over $D$. Even when $A$ fails to satisfy the hypothesis of the theorem, $A$ may still be a ringset. For instance, if $A$ is a ring of upper triangular matrices over $D$, then there may exist elements of $A$ that cannot be written in the form $\sum_i c_i u_i$. Nevertheless, it is known \cite[Thm.\ 5.4]{Frisch2017} that $\Int(A)$ is a ring. To date, no example has been given of a $D$-algebra $A$ for which $\Int(A)$ is not a ring.

The purpose of this article is to study finite ringsets of two quaternion algebras over $\Z$. Let $\bfi$, $\bfj$, and $\bfk$ be the standard quaternion units that satisfy $\bfi^2=\bfj^2=-1$ and $\bfi\bfj = \bfk = -\bfj\bfi$. The ring of Lipschitz quaternions $\LQ$ and the ring of Hurwitz quaternions $\HQ$ are
\begin{align*}
\LQ &:= \{a_0+a_1\bfi+a_2\bfj+a_3\bfk \mid a_i \in \Z \text{ for each } i\}, \text{ and}\\
\HQ &:= \{a_0+a_1\bfi+a_2\bfj+a_3\bfk \mid a_i \in \Z \text{ for each } i, \text{ or } a_i \in \Z+\tfrac{1}{2} \text{ for each } i\}.
\end{align*}
Clearly, $\LQ \subseteq \HQ$, and both $\LQ$ and $\HQ$ are subrings of the division ring
\begin{equation*}
\D:= \{a_0+a_1\bfi+a_2\bfj+a_3\bfk \mid a_i \in \Q \text{ for each } i\}.
\end{equation*}
Basic properties of the arithmetic of $\LQ$, $\HQ$, and $\D$ can be found in \cite[Chap.\ 11]{Voight}. Note that both $\LQ$ and $\HQ$ have center $\Z$. The elements in $\HQ \setminus \LQ$ are exactly those of the form $(a_0+a_1\bfi+a_2\bfj+a_3\bfk)/2$ with $a_i$ an odd integer for each $0 \leq i \leq 3$. The unit group $\LQ^\times$ of $\LQ$ is $\LQ^\times = \{\pm 1, \, \pm \bfi, \pm \bfj, \pm \bfk\}$, which is the well-known quaternion group $Q_8$. The ring $\HQ$ contains the additional Hurwitz unit $\bfh:=(1+\bfi+\bfj+\bfk)/2$, and the full unit group $\HQ^\times$ has order 24 and is generated by $\bfi$ and $\bfh$.

For the remainder of this paper, $R$ will be either $\LQ$ or $\HQ$. The majority of our results and proofs hold for both $\LQ$ and $\HQ$, so we will distinguish between the two rings only when necessary. By \cite[Thm.\ 2.3]{WernerQuaternion}, $\Int(R)$ is a ring, so $R$ is a ringset of itself. Our goal is to characterize the finite subsets of $R$ that are ringsets. That is, we will describe all finite $S \subseteq R$ such that
\begin{equation*}
\Int(S,R) := \{f \in \D[x] \mid f(S) \subseteq R\}
\end{equation*}
is a subring of $\D[x]$. Similar sets of polynomials have been studied previously in the context of matrix algebras \cite{HafNagSak2019} and upper triangular matrix algebras \cite{NagHaf2024}. Some basic results on ringsets of $R$ were proved in \cite{WernerSurvey}.

\begin{Lem}\label{lem:basic ringsets}\cite[Sec.\ 6.3]{WernerSurvey}
Let $S, T \subseteq R$.
\begin{enumerate}[(1)]
\item A singleton set $S=\{\alpha\}$ is a ringset if and only if $\alpha \in \Z$.
\item For each $\alpha \in R$, the conjugacy class $\{u\alpha u^{-1} \mid u \in R^\times\}$ is a ringset.
\item If $S$ and $T$ are ringsets, then $S \cup T$ is a ringset.
\item If $S$ is a union of conjugacy classes, then $S$ is a ringset.
\end{enumerate}
\end{Lem}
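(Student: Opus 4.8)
The plan is to treat the four statements in the order (1), (3), (2), (4): parts (1) and (3) are short, (4) follows formally from (2), and (2) is where the real argument lies.

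For (1): if $\alpha \in \Z$ then $\alpha$ is central in $\D$, so $f \mapsto f(\alpha)$ is a ring homomorphism $\D[x] \to \D$ and $\Int(\{\alpha\},R)$, being the preimage of the subring $R$, is a subring of $\D[x]$. Conversely, if $\alpha \notin \Z$, then since $Z(R) = \Z$ there is $\beta \in R$ with $\alpha\beta \ne \beta\alpha$, and $R$ is a noncommutative $\Z$-algebra, so Example~\ref{ex:not a ringset} (with $D = \Z$, $A = R$) shows $\{\alpha\}$ is not a ringset. For (3): one has $\Int(S \cup T, R) = \Int(S,R) \cap \Int(T,R)$, and an intersection of subrings of $\D[x]$ is a subring; the same identity holds for an arbitrary union of subsets, which will also be used for (4).

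For (2), I would set $t = \mathrm{trd}(\alpha)$ and $n = \mathrm{nrd}(\alpha)$ (both in $\Z$) and $p(x) = x^2 - tx + n$. Every $\gamma$ in the conjugacy class $C = \{u\alpha u^{-1} \mid u \in R^\times\}$ has the same reduced trace and norm as $\alpha$, so $p(\gamma) = 0$; and since $p$ has coefficients in $\Z$, it is central in $\D[x]$. Dividing by $p$ writes any $f \in \D[x]$ as $f = pq + r$ with $\deg r \le 1$, and because $p$ is central in $\D[x]$ and vanishes at each $\gamma \in C$, one gets $f(\gamma) = r(\gamma)$ and $(f_1 f_2)(\gamma) = (r_1 r_2)(\gamma)$ for $\gamma \in C$, where $r_i$ is the remainder of $f_i$. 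So it suffices to show: if $r_1 = a_1 x + b_1$ and $r_2 = a_2 x + b_2$ with $a_i, b_i \in \D$ satisfy $r_1(\gamma), r_2(\gamma) \in R$ for all $\gamma \in C$, then $(r_1 r_2)(\gamma) \in R$ for all $\gamma \in C$. Expanding the product in $\D[x]$ (where $x$ is central) and evaluating yields the identity
\[
(r_1 r_2)(\gamma) = r_1(\gamma)\, r_2(\gamma) + a_1\big(r_2(\gamma)\,\gamma - \gamma\, r_2(\gamma)\big).
\]
The first summand lies in $R$. For the second, I would write $c = r_2(\gamma) \in R$ as a finite integer combination $c = \sum_i m_i v_i$ with $m_i \in \Z$ and $v_i \in R^\times$ (possible because $\LQ$ and $\HQ$ are spanned over $\Z$ by their units), so that
\[
c\gamma - \gamma c = \sum_i m_i(v_i \gamma - \gamma v_i) = \sum_i m_i\,(v_i \gamma v_i^{-1} - \gamma)\, v_i .
\]
Since each $v_i \gamma v_i^{-1}$ again lies in $C$, the difference $a_1(v_i\gamma v_i^{-1} - \gamma) = r_1(v_i\gamma v_i^{-1}) - r_1(\gamma)$ lies in $R$; right-multiplying by the unit $v_i$ and forming the integer combination keeps everything in $R$, so $a_1(c\gamma - \gamma c) \in R$ and hence $(r_1 r_2)(\gamma) \in R$. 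Thus $\Int(C,R)$ is closed under multiplication, and being a left $R$-submodule of $\D[x]$ that contains $1$, it is a subring. Part (4) then follows: if $S = \bigcup_i C_i$ is a union of conjugacy classes, $\Int(S,R) = \bigcap_i \Int(C_i,R)$ is an intersection of subrings by (2), hence a subring.

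The step I expect to be the obstacle is handling the commutator term $a_1\big(r_2(\gamma)\gamma - \gamma r_2(\gamma)\big)$: one never has information about $a_1$ alone, only about the values $a_1\gamma + b_1$ as $\gamma$ ranges over $C$, so the argument must route this term through differences of elements of $C$. That is exactly where the closure of $C$ under $R^\times$-conjugation and the description of each element of $R$ as an integer combination of units are both indispensable; for a single non-central point (as in part (1)) the analogous term genuinely need not land in $R$, consistent with the non-ringset behavior recorded there.
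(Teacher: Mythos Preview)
The paper does not actually supply its own proof of this lemma: it is quoted from \cite[Sec.\ 6.3]{WernerSurvey} and used as a black box. So there is no in-paper argument to compare against directly. That said, your proof is correct, and I checked the one nontrivial computation: with $r_i = a_i x + b_i$ one indeed has
\[
(r_1 r_2)(\gamma) - r_1(\gamma)r_2(\gamma) = a_1\bigl(r_2(\gamma)\gamma - \gamma r_2(\gamma)\bigr),
\]
and your unit-decomposition argument correctly routes this commutator through differences $r_1(v\gamma v^{-1}) - r_1(\gamma)$ with $v \in R^\times$, which lie in $R$ because $C$ is closed under $R^\times$-conjugation. The claim that $\LQ$ and $\HQ$ are $\Z$-spanned by their units is clear (for $\HQ$, any element of $\HQ\setminus\LQ$ differs from $\bfh$ by an element of $\LQ$).

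It is worth noting that your approach is very much in the spirit of the techniques the paper does develop: the reduction to linear remainders modulo the common minimal polynomial is exactly the device used in Lemma~\ref{lem:S star and linear polys} and throughout Section~\ref{sec:reduced ringsets}, and the ``write an element as an integer combination of units and pass to conjugates'' idea is the same mechanism behind Theorem~\ref{thm:sums of units}. One small point to make explicit if you write this up: when $\alpha \in \Z$ the polynomial $p(x) = x^2 - 2\alpha x + \alpha^2$ is not the minimal polynomial but still vanishes on $C = \{\alpha\}$, so the division-with-remainder step goes through unchanged; alternatively, that case is already covered by part~(1).
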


\begin{Ex}\label{ex:i and j}\mbox{}
\begin{itemize}
\item When $R = \LQ$, the full conjugacy class of $\bfi$ in $R$ is $\{\pm \bfi\}$. So, $\{\pm \bfi\}$ is a ringset with respect to $\LQ$. When $R = \HQ$, the full conjugacy class of $\bfi$ is $\{\pm \bfi, \pm \bfj, \pm \bfk\}$, so this set is a ringset of $\HQ$. 

\item Let $S = \{\pm \bfi\}$. Then, $S$ is not a ringset with respect to $\HQ$. To see this, let $f(x) = (x-\bfi)/2 \in \Int(S,\HQ)$. Then, $(f\bfh)(x) = (\bfh x - \bfi \bfh)/2$, so $(f\bfh)(\bfi) = (\bfh \bfi - \bfi \bfh)/2 = (\bfj - \bfk)/2 \notin \HQ$. Thus, $f\bfh \notin \Int(S,\HQ)$ and $S$ is not a ringset with respect to $\HQ$.

\item It will follow from our later results (see Theorem \ref{thm:Gamma is 1, 2, or 4}) that every $S \subseteq \{\pm \bfi, \pm \bfj, \pm \bfk\}$ such that $|S| \geq 2$ is a ringset with respect to $\LQ$. So, both $\{\pm \bfi\}$ and $\{\bfi, \bfj\}$ are ringsets, but $\{\bfi\} = \{\pm \bfi\} \cap \{\bfi, \bfj\}$ is not a ringset by Lemma \ref{lem:basic ringsets}(1). This shows that intersections of ringsets need not be ringsets.
\end{itemize}
\end{Ex}

The first step in our classification of finite ringsets of $R$ is to reduce the problem to sets in which each element satisfies the same polynomial over $\Z$.

\begin{Def}\label{def:min polys}
Given $\alpha = a_0+a_1\bfi+a_2\bfj+a_3\bfk \in \D$, the \textit{real part} of $\alpha$ is $a_0$, and we call $a_1$, $a_2$, and $a_3$ the \textit{imaginary coefficients} of $\alpha$. Let $\overline{\alpha}:= a_0-a_1\bfi-a_2\bfj-a_3\bfk$. The norm of $\alpha$ is $N(\alpha):= \alpha \cdot \overline{\alpha} = a_0^2 + a_1^2 + a_2^2 + a_3^2$. The minimal polynomial $\mu_\alpha \in \Q[x]$ of $\alpha$ is
\begin{equation*}
\mu_\alpha(x) := \begin{cases} x-\alpha, & \alpha \in \Q\\ x^2 - 2a_0x + N(\alpha), & \alpha \in \D\setminus \Q. \end{cases}
\end{equation*}
When $\alpha \in R$, $\mu_\alpha$ will have integer coefficients. Given a monic polynomial $m \in \Z[x]$ of degree 1 or 2, we define $\mcC_R(m) := \{\alpha \in R \mid \mu_\alpha = m\}$, which is the \textit{minimal polynomial class} of $m$ in $R$.
\end{Def}

\begin{Rem}\label{C_R(m) is finite}
Observe that any minimal polynomial class $\mcC_R(m)$ in $R$ is a finite set. Indeed, for any $\alpha=a_0+a_1\bfi+a_2\bfj+a_3\bfk \in R$, we have $N(\alpha)=a_0^2+a_1^2+a_2^2+a_3^2$. Since each $a_i$ is either an integer or a half-integer, for any given norm $n$, there are only finitely many $\alpha \in R$ such that $N(\alpha)=n$. Thus, there are only finitely many elements of $R$ that satisfy $m$. Hence, $\mcC_R(m)$ must be finite.
\end{Rem}

Note that since $\D$ is a division ring, $\mu_\alpha$ is irreducible over $\Q$ for all $\alpha \in \D$. Also, it is clear that conjugate elements of $R$ share the same minimal polynomial, but the converse is not true. For example, $3\bfi$ and $\bfi+2\bfj+2\bfk$ both have minimal polynomial $x^2+9$, but they are not conjugate in $R$. Thus, the sets $\mcC_R(m)$ need not coincide with conjugacy classes in $R$. However, $\mcC_R(m)$ is always a union of conjugacy classes.

\begin{Lem}\label{lem:C(m) is a ringset}
Let $m \in \Z[x]$ be monic and have degree 1 or 2. Then,
\begin{equation*}
\mcC_R(m) = \bigcup_{\alpha \in \mcC_R(m)} \{u \alpha u^{-1} \mid u \in R^\times\},
\end{equation*}
and $\mcC_R(m)$ is a ringset.
\end{Lem}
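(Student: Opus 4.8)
The plan is to first show that $\mcC_R(m)$ is closed under conjugation by the units of $R$ — which immediately yields the displayed equality — and then to deduce the ringset property from Lemma~\ref{lem:basic ringsets}. If $\mcC_R(m)$ is empty there is nothing to prove, since then the union is empty and $\Int(\emptyset,R) = \D[x]$ is a ring; so I would assume $\mcC_R(m) \neq \emptyset$.

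For the closure step, I would fix $\alpha \in \mcC_R(m)$ and $u \in R^\times$ and check two things. First, $u\alpha u^{-1} \in R$: since $u$ is a unit of $R$ it has norm $1$, so $u^{-1} = \overline{u} \in R$, whence $u\alpha u^{-1} \in R$. Second, $\mu_{u\alpha u^{-1}} = \mu_\alpha$: conjugation by $u$ fixes the norm, $N(u\alpha u^{-1}) = u\,N(\alpha)\,u^{-1} = N(\alpha)$, and the real part, $u\alpha u^{-1} + \overline{u\alpha u^{-1}} = u(\alpha+\overline{\alpha})u^{-1} = \alpha + \overline\alpha$ (using $\overline{u^{-1}} = u$ and that $\alpha+\overline\alpha \in \Q$ is central), and $\mu_\alpha$ is determined by these two quantities — in the degenerate case $\alpha \in \Q$, the element $\alpha$ is central, so $u\alpha u^{-1} = \alpha$. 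Equivalently, one can observe that $m(u\alpha u^{-1}) = u\,m(\alpha)\,u^{-1} = 0$ because the coefficients of $m \in \Z[x]$ lie in the center of $\D$, and then note that $m$ has the correct degree to be the minimal polynomial. Hence $\{u\alpha u^{-1} \mid u \in R^\times\} \subseteq \mcC_R(m)$; taking the union over all $\alpha \in \mcC_R(m)$, each of which lies in its own conjugacy class, gives the asserted equality.

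For the ringset claim, the equality just established presents $\mcC_R(m)$ as a union of conjugacy classes of $R$, so Lemma~\ref{lem:basic ringsets}(4) applies directly; alternatively, Remark~\ref{C_R(m) is finite} shows $\mcC_R(m)$ is a \emph{finite} union of conjugacy classes, each a ringset by Lemma~\ref{lem:basic ringsets}(2), so finitely many uses of Lemma~\ref{lem:basic ringsets}(3) finish the proof. I do not anticipate a real obstacle here: the only point needing genuine care is confirming that conjugation by a unit of $R$ simultaneously preserves the minimal polynomial and keeps the element in $R$, and both facts reduce to the elementary observation that units of $R$ have norm $1$; everything else is bookkeeping with Lemma~\ref{lem:basic ringsets}.
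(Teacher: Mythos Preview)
Your proposal is correct and follows essentially the same approach as the paper: establish that $\mcC_R(m)$ is closed under conjugation by units (the paper simply asserts this, having noted just before the lemma that conjugate elements share the same minimal polynomial), deduce the displayed equality, and then invoke Lemma~\ref{lem:basic ringsets}. You supply more detail than the paper does---verifying explicitly that $u^{-1}=\overline{u}\in R$ and that conjugation preserves real part and norm---but the structure of the argument is identical.
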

\begin{proof}
It is clear that $\mcC_R(m)$ is contained in the above union of conjugacy classes. Conversely, if $\beta \in \{u \alpha u^{-1} \mid u \in R^\times\}$ for some $\alpha \in \mcC_R(m)$, then $\beta \in \mcC_R(m)$. Thus, the given decomposition of $\mcC_R(m)$ holds, and $\mcC_R(m)$ is a ringset by Lemma \ref{lem:basic ringsets}.
\end{proof}

Any finite subset $S \subseteq R$ can be partitioned as $S=\bigcup_{i=1}^t S_i$, where each subset $S_i$ lies in a different minimal polynomial class. In Corollary \ref{cor:min poly decomp}, we prove that $S$ is a ringset if and only if each $S_i$ is a ringset. Thus, describing all finite ringsets of $R$ amounts to classifying the subsets of $\mcC_R(m)$ that are ringsets. For this, we introduce the notion of a \textit{reduced} set.

\begin{Def}\label{def:reduced set}
Let $S \subseteq R$. For $a \in \Z$, let $S+a := \{\alpha+a \mid \alpha \in S\}$. We say that $S$ is \textit{reduced} if both of the following two conditions hold.
\begin{enumerate}[(i)]
\item $S \subseteq \mcC_R(m)$ for some monic quadratic $m \in \Z[x]$.
\item For all $a \in \Z$ and all $n \in \Z$, $n \geq 2$, $S+a \not\subseteq nR$.
\end{enumerate}
\end{Def}

Note that a reduced set is necessarily finite by condition (i) of Definition \ref{def:reduced set} and Remark \ref{C_R(m) is finite}. More details and motivation for the definition of a reduced set are given in Section \ref{sec:min poly reduced}. We prove in Proposition \ref{prop:reduced set exists} that each nonempty $S \subseteq \mcC_R(m)$ has an associated reduced set $T$, and furthermore $S$ is a ringset if and only if $T$ is a ringset. This leads us to our main result, which is a full classification of the reduced subsets of $R$ that are ringsets.

\begin{Thm}\label{thm:main}
Let $S \subsetneq R$ be reduced and such that $|S| \geq 2$. Let $\Delta(S) = \{\alpha-\beta \mid \alpha, \beta \in S\}$ and let $\Gamma(S) = \gcd(\{N(\delta) \mid \delta \in \Delta(S)\})$.
\begin{enumerate}[(1)]
\item Assume $R = \LQ$. Then, $S$ is a ringset if and only if $\Gamma(S)=2$; or $\Gamma(S)=4$; or $\Gamma(S)=8$ and the following condition holds: there exist $\delta_1, \delta_2 \in \Delta(S)$ such that $\delta_1$ and $\delta_2$ are congruent modulo 4 to distinct residues in $\{2\bfi+2\bfj, \; 2\bfi+2\bfk, \; 2\bfj+2\bfk\}$.
\item Assume $R = \HQ$. Then, $S$ is a ringset if and only if $\Gamma(S)=1$ or $\Gamma(S) = 2$.
\end{enumerate}
\end{Thm}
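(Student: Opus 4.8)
\medskip
\noindent\emph{Proof plan.}
The argument rests on a reduction to linear polynomials. Since $S$ lies in a single minimal polynomial class, all of its elements satisfy one monic quadratic $m(x) = x^2 - tx + n \in \Z[x]$, and $m$ is central in $\D[x]$; hence every polynomial in $m\,\D[x]$ vanishes on $S$, and each $f \in \Int(S,R)$ is congruent modulo $m$ to a unique linear polynomial $c_f x + d_f$ with $f(\alpha) = c_f \alpha + d_f$ for all $\alpha \in S$. Multiplying two linear polynomials, reducing modulo $m$, and using $\alpha^2 = t\alpha - n$ gives the identity
\[
(fg)(\alpha) \;=\; f(\alpha)\,g(\alpha) \;+\; c_f\bigl(g(\alpha)\alpha - \alpha g(\alpha)\bigr) \qquad (\alpha \in S),
\]
valid for all $f,g \in \Int(S,R)$. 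Since $f(\alpha)g(\alpha) \in R$ automatically, since $\{c_f : f \in \Int(S,R)\}$ is exactly $C(S) := \{c \in \D : c(\alpha-\beta) \in R \text{ for all } \alpha,\beta \in S\}$, and since $g(\alpha)$ can be made an arbitrary element of $R$ (take $g$ a constant), this yields the governing criterion: $S$ is a ringset if and only if $c\,(\rho\alpha - \alpha\rho) \in R$ for every $c \in C(S)$, every $\rho \in R$, and every $\alpha \in S$.

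The next step makes the criterion concrete. Writing $M(S)$ for the additive subgroup of $R$ generated by $\Delta(S)$, one has $C(S) = \{c \in \D : cM(S) \subseteq R\}$. Expanding $N(\alpha - \beta)$ and using that elements of a common minimal polynomial class share their norm and real part, one checks that $\gcd\{N(\mu) : \mu \in M(S)\} = \Gamma(S)$, so $\Gamma(S)$ is the content of the reduced norm on the lattice $M(S)$; in particular $\Delta(S)$ and $M(S)$ consist of \emph{integer} pure quaternions. Moreover $\rho\alpha - \alpha\rho = 2\,\rho' \times \alpha'$ in terms of the imaginary parts and the quaternion cross product, so $[R,\alpha] := \{\rho\alpha - \alpha\rho : \rho \in R\}$ is twice an explicit sublattice of the pure quaternions that depends only on $\alpha'$, and the criterion becomes $C(S)\cdot[R,\alpha] \subseteq R$ for all $\alpha \in S$. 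Now one works prime by prime. For an odd prime $p$, $R_{(p)}$ is a maximal order in the split algebra $M_2(\Q_p)$; a space of trace-zero $2 \times 2$ matrices on which the determinant vanishes modulo $p$ lies in the nilpotent cone of $\mathfrak{sl}_2(\F_p)$ and so has dimension at most $1$, whence $p \mid \Gamma(S)$ forces $M(S)_{(p)}$ to be a rank-one lattice $\Z_p v$ with $v$ primitive and $v_p(N(v)) = v_p(\Gamma(S))$. The reduced hypothesis guarantees that some $\alpha \in S$ has imaginary part not divisible by $p$, which keeps the commutator lattice $[R,\alpha]_{(p)}$ nondegenerate; computing $C(S)_{(p)}$ and $[R,\alpha]_{(p)}$ from elementary divisors then shows that $C(S)_{(p)}[R,\alpha]_{(p)} \subseteq R_{(p)}$ forces $v_p(\Gamma(S)) = 0$. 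Hence a ringset has $\Gamma(S)$ equal to a power of $2$ --- a positive power in the $\LQ$ case, since differences within a minimal polynomial class have even norm.

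The remaining and most delicate work is the prime $p = 2$, where $\D$ ramifies: $\HQ_{(2)}$ is the unique maximal order in the quaternion division algebra over $\Q_2$ (uniformizer $\pi$ with $N(\pi) = 2$ and $\pi^2 \sim 2$), while $\LQ_{(2)}$ is a suborder of index $2$. Here one enumerates the possible $2$-adic data --- the lattice $M(S)_{(2)}$ of integer pure quaternions of norm-content $2^a$ and the residue of $S$ modulo $2R$, as constrained by the reduced hypothesis --- computes $C(S)_{(2)} = \{c : cM(S)_{(2)} \subseteq R_{(2)}\}$ in each case, and, using a normal form for $\alpha'$ modulo $4$ together with the ramification of $\D$, computes the cross-product lattices $[R,\alpha]_{(2)}$. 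The key structural point is that every $\alpha'$ is an integer pure quaternion, so $[\LQ,\alpha] \subseteq 2\LQ$ always and a factor of $2$ can be absorbed, whereas $[\HQ,\alpha]$ contains genuinely odd elements such as $\bfj - \bfk \in [\HQ,\bfi]$ and no such absorption is available. The computation then shows that $C(S)_{(2)}[R,\alpha]_{(2)} \subseteq R_{(2)}$ for all $\alpha \in S$ holds exactly when: $R = \HQ$ and $a \le 1$, i.e.\ $\Gamma(S) \in \{1,2\}$; or $R = \LQ$ and $a \le 2$, i.e.\ $\Gamma(S) \in \{2,4\}$; or $R = \LQ$, $a = 3$ (so $\Gamma(S) = 8$), and in addition $C(S)_{(2)}$ contains no element $c$ with $v_2(N(c)) = -3$ while the common residue modulo $2$ of the imaginary parts of the elements of $S$ is $0$ or $\bfi+\bfj+\bfk$; and one shows this last pair of conditions is equivalent to $\Delta(S)$ meeting two of the three classes $2\bfi+2\bfj$, $2\bfi+2\bfk$, $2\bfj+2\bfk$ modulo $4$. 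Assembling the local conditions over all primes gives the two classifications. The main obstacle is precisely this $2$-adic analysis: untangling how the degeneracy type of $M(S)_{(2)}$, the ramification of $\D$ at $2$, and the commutator lattices $[R,\alpha]_{(2)}$ interact is what produces both the $\LQ$/$\HQ$ asymmetry and the exceptional case $\Gamma(S) = 8$.
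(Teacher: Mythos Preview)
Your plan is sound and runs parallel to the paper's proof, repackaged in more structural language. The commutator identity $(fg)(\alpha)=f(\alpha)g(\alpha)+c_f\bigl(g(\alpha)\alpha-\alpha g(\alpha)\bigr)$ and the resulting criterion ``$S$ is a ringset $\Leftrightarrow$ $C(S)\cdot[R,\alpha]\subseteq R$ for all $\alpha\in S$'' are exactly what the paper is using, though it never states them in that form: the paper's Lemma~\ref{lem:mult by units} (closure under right multiplication by $\bfi,\bfj,\bfk,\bfh$) together with the computations of $(fu)(\beta)-f(\beta)u$ is precisely your criterion restricted to generators $\rho\in\{\bfi,\bfj,\bfk,\bfh\}$, and on the positive side Lemma~\ref{lem:S star and linear polys} (passing to $S^\ast$) is equivalent because $\alpha-u\alpha u^{-1}=-[u,\alpha]u^{-1}$. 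Your odd-prime step is likewise the paper's Lemma~\ref{lem:nilpotent matrices}/Proposition~\ref{prop:p divides Gamma}: both use that a linear subspace of trace-zero matrices in $M_2(\F_p)$ consisting of nilpotents is at most one-dimensional. What your packaging buys is a single iff criterion that handles both directions uniformly and makes the local nature of the problem explicit; what the paper's hands-on approach buys is that the $2$-adic case is actually carried out rather than asserted.

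One genuine imprecision to fix: it is not true that $p\mid\Gamma(S)$ forces $M(S)_{(p)}$ to be a \emph{rank-one} $\Z_p$-lattice. Take $S=\{3\bfi,\,3\bfj,\,\bfi+2\bfj+2\bfk\}$ with $p=3$: then $M(S)$ has full rank $3$ over $\Z$, but all norms of differences are divisible by $3$. What is true (and sufficient) is that the \emph{image} of $M(S)$ in $R/pR\cong M_2(\F_p)$ is at most one-dimensional; this already yields a $\gamma\notin pR$ with $\gamma M(S)\subseteq pR$, hence $\gamma/p\in C(S)_{(p)}\setminus R_{(p)}$, and since the reduced hypothesis provides $\beta\in S$ with $[R,\beta]_{(p)}$ containing a unit, the criterion fails at $\beta$. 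Your statement ``$v_p(N(v))=v_p(\Gamma(S))$'' should be dropped for the same reason. The claim $\gcd\{N(\mu):\mu\in M(S)\}=\Gamma(S)$ is also unnecessary (and needs an argument, since the norm is not additive); you only need that $C(S)$ is determined by $\Delta(S)$, which is immediate.

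Finally, your $2$-adic paragraph is a promissory note rather than a proof: the phrases ``one enumerates the possible $2$-adic data'', ``the computation then shows'', and ``one shows this last pair of conditions is equivalent'' stand in for all of Sections~\ref{sec:reduced ringsets}--\ref{sec:Gamma is 8} of the paper. In particular, the exceptional case $R=\LQ$, $\Gamma(S)=8$ requires the parity analysis of Lemmas~\ref{lem:Gamma is 8 lem1}--\ref{lem:not a ringset 2} (forcing all imaginary coefficients odd) and then the argument of Theorem~\ref{thm:Gamma=8 full thm} that two distinct residues in $\{2\bfi+2\bfj,2\bfi+2\bfk,2\bfj+2\bfk\}$ suffice to push $4\gamma_1$ down to the right multiples; your plan gives no mechanism for either step. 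The route is correct, but this is where the actual work lives.
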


The majority of this paper is dedicated to proving Theorem \ref{thm:main} and working with the quantity $\Gamma(S)$ that is given in its statement. Section \ref{sec:reduced ringsets} deals with most cases in Theorem \ref{thm:main} (see Theorem \ref{thm:all except 8}), while Section \ref{sec:Gamma is 8} handles the exceptional case $R = \LQ$ and $\Gamma(S)=8$. We close the paper with a short summary of how to use our results to decide if a finite subset of $R$ is a ringset, and give some remarks on the difficulty of determining whether an infinite subset of $R$ is a ringset.

\section{Minimal Polynomial Classes and Reduced Sets}\label{sec:min poly reduced}

Recall from the introduction that $R$ denotes either the Lipschitz quaternions $\LQ$ or the Hurwitz quaternions $\HQ$. Any result that does not specify $R = \LQ$ or $R = \HQ$ will apply to both rings. For a monic $m \in \Z[x]$ of degree 1 or 2, $\mcC_R(m)$ is the set of elements of $R$ having minimal polynomial $m$. Our first theorem shows that distinct minimal polynomial classes can be separated via polynomials from $\Q[x]$. As a consequence of this, it will be enough to work with subsets of the classes $\mcC_R(m)$ when studying finite ringsets of $R$.

Note that when $f \in \D[x]$ and $g \in \Q[x]$, we have $(gf)(\alpha) = (fg)(\alpha) = f(\alpha)g(\alpha)$ for all $\alpha \in \D$. We will use this fact freely and frequently throughout the paper.

\begin{Thm}\label{thm:separation}
Let $S \subseteq R$ be finite and nonempty. Let $T \subseteq R$ be nonempty and such that each element of $T$ has the same minimal polynomial $m \in \Z[x]$, but no element of $S$ has minimal polynomial $m$. That is, $T \subseteq \mcC_R(m)$ and $S \cap \mcC_R(m) = \varnothing$.
\begin{enumerate}[(1)]
\item There exists $F \in \Q[x]$ such that for $\alpha \in S \cup T$,
\begin{equation*}
F(\alpha) = \begin{cases} 0, & \alpha \in S\\ 1, & \alpha \in T. \end{cases}
\end{equation*}
\item If $T$ is not a ringset, then $S \cup T$ is not a ringset.
\end{enumerate}
\end{Thm}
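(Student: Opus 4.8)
The plan is to prove (1) first and then deduce (2) as an easy consequence. For (1), the idea is to build $F$ from the minimal polynomials. Since $m$ is irreducible over $\Q$ and, for each $\beta \in S$, the minimal polynomial $\mu_\beta$ is coprime to $m$ in $\Q[x]$ (distinct monic irreducibles), evaluating $m$ at any element of $S$ gives a \emph{nonzero rational} — indeed $m(\beta) \in \Q$ because $m(\beta)$ is a polynomial expression in $\beta$ that, upon reducing $\beta^2 = 2a_0\beta - N(\beta)$, collapses to a scalar whenever $\beta \notin \Q$ (and trivially when $\beta \in \Q$, since then $m(\beta) \ne 0$ as $m \ne \mu_\beta$). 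So consider the polynomial $g(x) = \prod_{\beta \in S} m(x)$ raised to a suitable power, or more simply $g(x) = m(x)^k$ for $k$ large; then $g \in \Q[x]$, $g$ vanishes on... no — rather, I want a polynomial that \emph{kills} $S$ and is $1$ on $T$. So instead set $p(x) = \prod_{\beta \in S} \mu_\beta(x) \in \Q[x]$: this vanishes identically on $S$. Its value on any $\tau \in T$ is a fixed nonzero rational $c$ (nonzero because each $\mu_\beta(\tau) \ne 0$, as $\mu_\beta \ne m = \mu_\tau$ and $\mu_\tau$ is irreducible; and the value is the \emph{same} $c$ for all $\tau \in T$ because each $\mu_\beta(\tau)$ depends only on $m$ via the reduction $\tau^2 = 2a_0\tau - N(\tau)$ — concretely $\mu_\beta(\tau) = m(\tau) \cdot q_\beta(\tau) + r_\beta(\tau)$ but $m(\tau)=0$ so $\mu_\beta(\tau) = r_\beta(\tau)$ where $r_\beta = \mu_\beta \bmod m$ has degree $\le 1$; since $\tau$ and its conjugates all reduce the same way and $r_\beta(\tau)$ being rational forces the linear coefficient to contribute zero... one checks $r_\beta(\tau) \in \Q$ is determined by $m$ alone). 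Then $F := p/c$ does the job. I should double-check the claim that $\mu_\beta(\tau)$ is rational and independent of the choice of $\tau \in \mcC_R(m)$: write $\mu_\beta(x) = m(x)h(x) + (sx+t)$ with $s,t \in \Q$; then $\mu_\beta(\tau) = s\tau + t$, and since $\mu_\beta(\tau)$ must be rational (it equals $\overline{\mu_\beta(\tau)}$ as $\mu_\beta$ has rational coefficients and $\tau$, $\overline\tau$ satisfy the same $m$... actually $\mu_\beta(\tau)$ need not a priori be rational). The cleaner route: $\mu_\beta(\tau)\overline{\mu_\beta(\tau)} = \mu_\beta(\tau)\mu_\beta(\overline\tau)$ and the resultant $\mathrm{Res}(\mu_\beta, m) \ne 0$; but for a transparent proof I will instead take $F(x) = \bigl(\prod_\beta \mu_\beta(x)\bigr)\bigl(\prod_\beta \mu_\beta(\overline{x})\bigr)$ — no, $\overline x$ is not polynomial.

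Let me restructure: the genuinely clean argument uses that $\mu_\beta(x) m(x)$-coprimality gives $a(x)\mu_\beta(x) + b(x)m(x) = 1$ in $\Q[x]$ for each $\beta$, but products are awkward. The slickest approach, which I expect to use, is interpolation in the quotient ring. Let $M(x) = \prod_{\beta \in S}\mu_\beta(x) \cdot m(x) \in \Q[x]$ be the product of the distinct monic irreducibles involved; by CRT, $\Q[x]/(M) \cong \prod \Q[x]/(\mu_\beta) \times \Q[x]/(m)$, and each factor is a field. Pick $F \bmod M$ corresponding to $(0,\dots,0,1)$; lift to $F \in \Q[x]$. Then for $\beta \in S$, $\mu_\beta \mid F$, so $F(\beta) = 0$; for $\tau \in T$, $m \mid (F-1)$, so $F(\tau) = 1$. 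This sidesteps all the rationality bookkeeping.

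For part (2), suppose $T$ is not a ringset: there are $f, g \in \Int(T,R)$ with $fg \notin \Int(T,R)$, say $(fg)(\tau_0) \notin R$ for some $\tau_0 \in T$. Using $F$ from (1), set $\tilde f = Ff$ and $\tilde g = Fg$. Since $F \in \Q[x]$ has rational (hence central) coefficients, $(Ff)(\alpha) = F(\alpha)f(\alpha)$ and $(F^2 fg)(\alpha) = F(\alpha)^2 (fg)(\alpha)$ for all $\alpha \in \D$. For $\alpha \in S$: $F(\alpha) = 0$, so $\tilde f(\alpha) = \tilde g(\alpha) = 0 \in R$ and $(\tilde f \tilde g)(\alpha) = 0 \in R$. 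For $\tau \in T$: $F(\tau) = 1$, so $\tilde f(\tau) = f(\tau) \in R$, $\tilde g(\tau) = g(\tau) \in R$, and $(\tilde f\tilde g)(\tau) = (fg)(\tau)$. Hence $\tilde f, \tilde g \in \Int(S\cup T, R)$ but $(\tilde f \tilde g)(\tau_0) = (fg)(\tau_0) \notin R$, so $\tilde f \tilde g \notin \Int(S \cup T, R)$ and $S \cup T$ is not a ringset. The only subtlety to verify is that $F^2 f g$ is literally the product $\tilde f \tilde g$ in $\D[x]$ and that evaluation behaves as claimed — this is exactly the "$g \in \Q[x] \Rightarrow (gf)(\alpha) = f(\alpha)g(\alpha)$" fact highlighted before the theorem, applied twice. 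I do not anticipate a real obstacle here; the one place to be careful is ensuring $F$ can be chosen in $\Q[x]$ (not just some extension), which the CRT argument over the field $\Q$ handles since $S$ is finite so $M$ has finite degree.
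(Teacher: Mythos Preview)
Your proof is correct. Part (2) is essentially identical to the paper's argument: both multiply the offending $f,g \in \Int(T,R)$ by the separator $F$ and use centrality of rational coefficients to push evaluation through the product.

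For part (1) you take a genuinely different route. After the exploratory discussion, your final argument is the Chinese Remainder Theorem: with $M$ the product of the distinct minimal polynomials from $S$ together with $m$, the isomorphism $\Q[x]/(M) \cong \prod_\beta \Q[x]/(\mu_\beta) \times \Q[x]/(m)$ lets you lift the idempotent $(0,\ldots,0,1)$ to $F \in \Q[x]$; then $\mu_\beta \mid F$ and $m \mid (F-1)$, and since these divisors have central coefficients the evaluations $F(\beta)=0$, $F(\tau)=1$ follow. The paper instead builds $F$ by hand: it sets $M = \prod \mu_\beta$ (distinct), divides by $m$ to get a linear remainder $r(x)=c_1x+c_0$, observes that $N(r(\tau))$ is the same nonzero rational $d$ for every $\tau \in T$ (it depends only on the trace and norm encoded in $m$), constructs an auxiliary $s \in \Z[x]$ with $s(\tau)=\overline{r(\tau)}$, and takes $F = Ms/d$. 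Your CRT argument is shorter and conceptually cleaner; the paper's construction is more explicit and, as a bonus, shows directly how to write down $F$ and that it can be taken with integer numerator and a single integer denominator. Both arguments ultimately hinge on the same fact you identified early on: the distinct irreducibles $\mu_\beta$ and $m$ are pairwise coprime in $\Q[x]$.
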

\begin{proof}
(1) Let $M$ be the product of all the distinct minimal polynomials of elements in $S$. Then, $M$ is monic and has integer coefficients. Note that $m$ does not divide $M$ in $\Q[x]$, because no element of $S$ has minimal polynomial $m$, and $\mu_\alpha$ is irreducible over $\Q[x]$ for all $\alpha \in \D$. If $\deg m = 1$, then $T = \{\alpha\}$ for some $\alpha \in \Z$. In this case, we can take $F(x) = M(x)/M(\alpha)$, and we are done.

From here, assume that $\deg m = 2$. Divide $M$ by $m$ to get $M = qm+r$, where $q,r \in \Z[x]$ and either $r = 0$ or $\deg r \leq 1$. Then, $M(\alpha) = 0$ for all $\alpha \in S$, and $M(\alpha) = r(\alpha)$ for all $\alpha \in T$. Since $m$ does not divide $M$, $M(\alpha) \ne 0$ and $r \ne 0$.

Let $m(x) = x^2 - 2ax+n$. Since $T \subseteq \mcC_R(m)$, each element of $T$ has real part $a$ and norm $n$. Write $r(x) = c_1 x + c_0$ for some $c_0, c_1 \in \Z$. For every $\alpha \in T$, we have
\begin{equation*}
r(\alpha) = c_1 \alpha + c_0 = (c_1a + c_0) + c_1(\alpha - a).
\end{equation*}
Let $d=N(r(\alpha)) = c_0^2 + 2c_0c_1a + c_1^2n$. Observe that $d$ is independent of the choice of $\alpha \in T$. Moreover, $d \ne 0$ because $r(\alpha) \ne 0$, and $0$ itself is the only element of $\D$ with norm 0. Let $s(x) = c_1a + c_0 - c_1(x-a) \in \Z[x]$. Then, $s(\alpha) = \overline{r(\alpha)}$ for all $\alpha \in T$. Since both $M$ and $s$ have integer coefficients, $(Ms)(\alpha) = M(\alpha)s(\alpha)$ for all $\alpha \in S \cup T$. Thus, $(Ms)(\alpha) = 0$ when $\alpha \in S$, and 
\begin{equation*}
(Ms)(\alpha) = M(\alpha)s(\alpha) = r(\alpha)s(\alpha) = d
\end{equation*}
when $\alpha \in T$. Hence, the polynomial $F(x) = (Ms)(x)/d$ has the desired properties.

(2) Assume that $T$ is not a ringset. Then, there exist $f, g \in \Int(T,R)$ such that $fg \notin \Int(T,R)$. Let $F \in \Q[x]$ be as in part (1). Then, for any $\alpha \in S \cup T$, 
\begin{equation*}
(fF)(\alpha) = f(\alpha)F(\alpha) \text{ and } (gF)(\alpha) = g(\alpha)F(\alpha),
\end{equation*}
so both $fF$ and $gF$ are in $\Int(S \cup T, R)$. We claim that $fF \cdot gF \notin \Int(S \cup T, R)$. To see this, pick $\beta \in T$ such that $(fg)(\beta) \notin R$. We have 
\begin{equation*}
(fF \cdot gF)(\beta) = (fg)(\beta)\cdot (F(\beta))^2 = (fg)(\beta) \notin R.
\end{equation*}
Thus, $fF \cdot gF \notin \Int(S \cup T, R)$ and $S \cup T$ is not a ringset.
\end{proof}

\begin{Cor}\label{cor:min poly decomp}
Let $S \subseteq R$ be finite and nonempty. Let $m_1, \ldots, m_t$ be the distinct minimal polynomials of all of the elements of $S$. For each $1 \leq i \leq t$, let $S_i = S \cap \mcC_R(m_i)$. Then, $S$ is a ringset if and only if $S_i$ is a ringset for all $1 \leq i \leq t$.
\end{Cor}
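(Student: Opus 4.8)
The plan is to obtain this directly from Theorem \ref{thm:separation} together with Lemma \ref{lem:basic ringsets}(3), handling the two implications separately.

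For the ``if'' direction, suppose each $S_i$ is a ringset. Since $S = \bigcup_{i=1}^t S_i$, a straightforward induction on $t$ using Lemma \ref{lem:basic ringsets}(3) shows that $S$ is a ringset: the base case $t = 1$ is immediate, and each inductive step is a single application of part (3) to the ringsets $\bigcup_{i=1}^{t-1} S_i$ and $S_t$.

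For the ``only if'' direction, I would argue by contraposition. Suppose some $S_j$ fails to be a ringset. If $t = 1$ then $S = S_1 = S_j$ and there is nothing to prove, so assume $t \geq 2$ and set $S' := \bigcup_{i \neq j} S_i = S \setminus S_j$. Then $S'$ is finite and nonempty, $S_j \subseteq \mcC_R(m_j)$ is nonempty (since $m_j$ is the minimal polynomial of at least one element of $S$), and no element of $S'$ has minimal polynomial $m_j$, i.e.\ $S' \cap \mcC_R(m_j) = \varnothing$. Thus the hypotheses of Theorem \ref{thm:separation} hold with the roles of ``$S$'' and ``$T$'' played by $S'$ and $S_j$, respectively. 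Since $S_j$ is not a ringset, Theorem \ref{thm:separation}(2) yields that $S' \cup S_j = S$ is not a ringset, which is the desired contrapositive.

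I do not expect a real obstacle here, as the substantive work is already contained in Theorem \ref{thm:separation}; the only points needing a little care are verifying the nonemptiness and minimal-polynomial hypotheses required to invoke that theorem, and disposing of the degenerate case $t = 1$ separately.
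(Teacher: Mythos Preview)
Your proposal is correct and follows essentially the same approach as the paper: the ``if'' direction comes from Lemma~\ref{lem:basic ringsets}(3) applied to the partition $S=\bigcup_i S_i$, and the ``only if'' direction is the contrapositive via Theorem~\ref{thm:separation}(2). The paper's proof is terser, but your added care in verifying the nonemptiness and disjointness hypotheses for Theorem~\ref{thm:separation} and in separating out the case $t=1$ is entirely appropriate.
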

\begin{proof}
The sets $S_1, \ldots, S_t$ form a partition of $S$, so $S = \bigcup_{i=1}^t S_i$. If each $S_i$ is a ringset, then $S$ is a ringset by Lemma \ref{lem:basic ringsets}. Conversely, if some $S_i$ is not a ringset, then $S$ is not a ringset by Theorem \ref{thm:separation}.
\end{proof}

By Corollary \ref{cor:min poly decomp}, determining the finite ringsets of $R$ amounts to characterizing the ringsets within each minimal polynomial class $\mcC_R(m)$. When $m$ is linear, $\mcC_R(m) \subseteq \Z$ and every subset of $\mcC_R(m)$ is a ringset. So, for the remainder of the paper, we will focus on the classes $\mcC_R(m)$ with $m$ quadratic. 

\begin{Rem}\label{Disjoint classes}
Note that when $R = \HQ$, the minimal polynomial classes that lie within $\LQ$ are disjoint from those within $\HQ\setminus\LQ$. Explicitly, $\mcC_R(m) \subseteq \LQ$ if and only if the linear coefficient of $m$ is even, and $\mcC_R(m) \subseteq \HQ\setminus\LQ$ if and only if the linear coefficient of $m$ is odd. This dichotomy will be used several times to split proofs about a set $S \subseteq \mcC_R(m)$ into two cases: one case in which $S \subseteq \HQ\setminus\LQ$, and a second case in which $S \subseteq \LQ$.
\end{Rem}

For a subset $S$ of a single minimal polynomial class $\mcC_R(m)$, we can further reduce the problem by considering integer translations and integer multiples of $S$. When $a, n \in \Z$ and $n \ne 0$, we define $S+a:=\{\alpha+a \mid \alpha \in S\}$ and $nS:=\{n\alpha \mid \alpha \in S\}$.

\begin{Lem}\label{lem:translation}
Let $S \subseteq R$, $a \in \Z$, and $n \in \Z$ with $n \ne 0$. The following are equivalent:
\begin{enumerate}[(i)]
\item $S$ is a ringset.
\item $S+a$ is a ringset.
\item $nS$ is a ringset.
\end{enumerate}
\end{Lem}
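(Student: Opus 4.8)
The plan is to prove the three-way equivalence by establishing (i) $\Leftrightarrow$ (ii) and (i) $\Leftrightarrow$ (iii) separately, in each case exhibiting a bijection between $\Int(S,R)$ and the corresponding set of integer-valued polynomials that respects multiplication. The underlying idea is that translating the argument by an integer, or scaling it by an integer, is implemented on the polynomial side by a substitution $x \mapsto x+a$ or $x \mapsto nx$ (combined, in the scaling case, with clearing a denominator), and since integers are central in $B$ these substitutions are ring homomorphisms of $\D[x]$ that interact well with right evaluation.

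For (i) $\Leftrightarrow$ (ii): define $\Phi \colon \D[x] \to \D[x]$ by $\Phi(f)(x) = f(x+a)$. Because $a$ is central, $\Phi$ is a ring automorphism of $\D[x]$ (its inverse is $g(x) \mapsto g(x-a)$), and for every $\alpha \in \D$ we have $\Phi(f)(\alpha) = f(\alpha+a)$; here one uses that substituting a central element into a product behaves multiplicatively, which is exactly the remark recalled before Theorem~\ref{thm:separation}. Hence $f \in \Int(S+a,R)$ iff $\Phi(f) \in \Int(S,R)$, so $\Phi$ restricts to an additive bijection $\Int(S+a,R) \to \Int(S,R)$; since $\Phi$ is a ring homomorphism, one side is closed under multiplication iff the other is. This gives (i) $\Leftrightarrow$ (ii).

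For (i) $\Leftrightarrow$ (iii): here the bookkeeping is slightly less clean because the scaling map is not surjective on $\D[x]$. I would argue directly that $f \in \Int(nS,R)$ iff the polynomial $g(x) := f(nx)$ lies in $\Int(S,R)$ — indeed $g(\alpha) = f(n\alpha)$ for all $\alpha$, again by centrality of $n$ — and that $f \mapsto g$ is an injective ring homomorphism whose image is $\{g \in \D[x] : g$'s degree-$j$ coefficient lies in $n^{-j}\cdot(\text{coeff.\ of }f) \}$, i.e.\ it is the subring of $\D[x]$ generated by $\D$ and $nx$. To keep the correspondence between the two rings honest one should check: (a) if $\Int(S,R)$ is closed under multiplication, then so is $\Int(nS,R)$, which follows because $f_1, f_2 \in \Int(nS,R)$ give $g_1, g_2 \in \Int(S,R)$ with $g_1 g_2 = (f_1 f_2)(nx)$, hence $f_1 f_2 \in \Int(nS,R)$; and (b) conversely, if $\Int(nS,R)$ is closed under multiplication, then given $h_1, h_2 \in \Int(S,R)$, write $h_i(x) = g_i(x/n)$ is not integral, so instead choose a large power $n^N$ killing all relevant denominators, set $f_i(x) = h_i(n^{N-?}\,\cdots)$ — the cleanest route is to note $h_i(x) \in \Int(S,R) \Rightarrow h_i((1/n)\cdot nx)$, so $h_i \in \Int(S,R)$ corresponds to $f_i(x) := h_i(x)$ viewed after the substitution $x \mapsto nx$ applied to $h_i(n^{-1}x)$; rather than fight this, I would simply prove (i) $\Leftrightarrow$ (iii) by applying the already-proved implication ``$S$ ringset $\Rightarrow nS$ ringset'' twice, once to $S$ and once to $nS$ with multiplier... — but that fails since $n \cdot nS \ne S$. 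So the honest argument is: the map $f(x) \mapsto f(nx)$ identifies $\Int(nS,R)$ with the subring $\Int(S,R) \cap (\text{image})$, and one checks $\Int(S,R)$ is multiplicatively closed iff this subring is, using that any $h \in \Int(S,R)$ satisfies $h(x) = \tilde h(nx)$ for $\tilde h(x) = h(x/n) \in \D[x]$, and $\tilde h \in \Int(nS, R)$ trivially from $\tilde h(n\alpha) = h(\alpha) \in R$. That last observation is the key point: $\Int(nS,R) = \{\,f \in \D[x] : f(nx) \in \Int(S,R)\,\}$ and the substitution $x \mapsto nx$ is a ring isomorphism from $\D[x]$ onto itself (with inverse $x \mapsto x/n$, which exists in $\D[x]$ since $n$ is a nonzero integer, hence a unit in $\D$), so it transports the multiplicatively-closed property in both directions.

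The main obstacle I anticipate is not any deep idea but getting the scaling direction (iii) airtight: one must be careful that $x \mapsto nx$ genuinely is a ring automorphism of $\D[x]$ — it is, because $n^{-1} \in \D$ is central, so the formal substitution is well-defined and invertible — and that evaluation commutes with it in the sense $\big(f(nx)\big)\big|_{x=\alpha} = f(n\alpha)$, which requires $n$ to be central when distributing it through coefficients under right evaluation. Once these two facts are nailed down, both equivalences reduce to the general principle that a ring automorphism $\psi$ of $\D[x]$ carrying $\Int(S',R)$ bijectively onto $\Int(S,R)$ forces the two to be simultaneously closed under multiplication. I would also remark that the case $n < 0$ needs no separate treatment since $nS = (-n)(-S)$ and $x \mapsto -x$ is covered by the same automorphism argument (or absorb it into $x \mapsto nx$ directly, as $n$ is simply a nonzero integer).
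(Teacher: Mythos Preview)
Your proposal is correct and, once the dust settles, takes exactly the same route as the paper: both equivalences are obtained from ring automorphisms of $\D[x]$ given by central substitutions, $f(x)\mapsto f(x\pm a)$ for (ii) and $f(x)\mapsto f(x/n)$ (equivalently $f(x)\mapsto f(nx)$) for (iii). The long detour in your treatment of (iii) is unnecessary---the single observation you eventually reach, that $n$ is a unit in $\D$ so $x\mapsto nx$ is an automorphism of $\D[x]$ with inverse $x\mapsto x/n$, is precisely what the paper uses and is all that is needed.
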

\begin{proof}
Note that since $a, n \in \Z$, both $S+a$ and $nS$ are subsets of $R$. Since $a$ is central in $\D$, the mapping $f(x) \mapsto f(x-a)$ is a ring automorphism on $\D[x]$. The image of $\Int(S,R)$ under this mapping is $\Int(S+a,R)$; hence, $S$ is a ringset if and only if $S+a$ is a ringset. The analogous result holds for $S$ and $nS$ by considering the mapping $f(x) \mapsto f(x/n)$, which is also a ring automorphism on $\D[x]$.
\end{proof}

\begin{Ex}\label{ex:4+5i}
Let $S = \{4+5\bfi, 4+5\bfj\} \subseteq \mcC_R(x^2-8x+41)$ and $T = \{\bfi, \bfj\} \subseteq \mcC_R(x^2+1)$. Then, $S = 4+5T$. In light of Lemma \ref{lem:translation}, $S$ is a ringset if and only if $T$ is a ringset. As our forthcoming work will show (see Theorem \ref{thm:Gamma is 1, 2, or 4}), we can conclude that $T$ is a ringset simply because $N(\bfi-\bfj)=2$. Hence, $S$ is also a ringset.
\end{Ex}

The relationships among $S$, $a+S$, and $nS$ are the inspiration for reduced subsets of $R$. Recall from Definition \ref{def:reduced set} that $S \subseteq R$ is \textit{reduced} if the following two conditions hold:
\begin{enumerate}[(i)]
\item $S \subseteq \mcC_R(m)$ for some monic quadratic $m \in \Z[x]$. 
\item For all $a \in \Z$ and all $n \in \Z$, $n \geq 2$, $S+a \not\subseteq nR$.
\end{enumerate}
Essentially, reduced sets are those for which we have ``factored out'' as many positive integers as possible from the imaginary coefficients of the elements of $S$. We state this more precisely in Proposition \ref{prop:reduced equivalent def} below. Note that since $\mcC_R(m)$ is always finite (see Remark \ref{C_R(m) is finite}), any reduced set is finite by condition (i). Also by condition (i), $S \cap \Z = \varnothing$, and condition (ii) implies that $S$ is nonempty. So, any reduced set is a finite nonempty subset of $R \setminus \Z$. Finally, each element of a reduced set has the same real part. We will exploit these facts often in our subsequent work.

\begin{Ex}\label{ex:reduced sets}
We give some examples to illustrate reduced sets.
\begin{itemize}
\item With $S$ and $T$ as in Example \ref{ex:4+5i}, $S$ is not reduced, but $T$ is reduced.

\item Let $S = \{3\bfi, 3\bfj, 3\bfk\}$ and $T = \{3\bfi, 3\bfj, \bfi+2\bfj+2\bfk\}$, which are both subsets of $\mcC_R(x^2+9)$. Then, $S$ is not reduced, because $S \subseteq 3R$. However, $T$ is reduced.

\item Let $R = \HQ$. Let $S = \{(1+5\bfi+15\bfj+25\bfk)/2,  (1-5\bfi-15\bfj-25\bfk)/2\} \subseteq \mcC_R(x^2-x+219)$ and $T = \{(1+\bfi+3\bfj+5\bfk)/2,  (1-\bfi-3\bfj-5\bfk)/2\} \subseteq \mcC_R(x^2-x+9)$. Then, $T$ is reduced, but $S+2=5T$ and hence $S$ is not reduced.

\item Determining whether a subset is reduced or not can depend on whether $R = \LQ$ or $R = \HQ$. For instance, let $S = \{\bfi+\bfj+\bfk, \bfi-\bfj-\bfk\}$. Then, $S$ is reduced with respect to $\LQ$. However, $\HQ$ contains both $(1+\bfi+\bfj+\bfk)/2$ and $(1+\bfi-\bfj-\bfk)/2$, so $S+1 \subseteq 2\HQ$. Thus, $S$ is not reduced with respect to $\HQ$.

\item More generally, when $R = \HQ$, $S \subseteq \mcC_R(m) \subseteq \LQ$ for some monic quadratic $m$, and each imaginary coefficient of every element of $S$ is odd, then $S$ is not reduced. In this case, either $S$ or $S+1$ is in $2\HQ$, depending on whether the real part of the elements of $S$ is odd or even.
\end{itemize}
\end{Ex}

\begin{Prop}\label{prop:reduced equivalent def}
Let $S \subsetneq R$ be nonempty. Assume that $S \subseteq \mcC_R(m)$ for some monic quadratic $m \in \Z[x]$.
\begin{enumerate}[(1)]
\item Assume $R = \LQ$. Then, $S$ is reduced if and only if for each prime $p$, there exists $\beta = b_0 + b_1\bfi + b_2\bfj + b_3\bfk \in S$ such that $p \nmid b_i$ for some $1 \leq i \leq 3$.

\item Assume $R = \HQ$. By Remark \ref{Disjoint classes}, either $S \subsetneq \HQ\setminus\LQ$ or $S \subsetneq \LQ$.
\begin{enumerate}[(a)]
\item Assume $S \subsetneq \HQ\setminus\LQ$. Then, $S$ is reduced if and only if for each odd prime $p$, there exists $\beta = (b_0 + b_1\bfi + b_2\bfj + b_3\bfk)/2 \in S$ such that $p \nmid b_i$ for some $1 \leq i \leq 3$.

\item Assume $S \subsetneq \LQ$. Then, $S$ is reduced if and only if both of the following conditions hold:
\begin{enumerate}[(i)]
\item $S$ is reduced with respect to $\LQ$.
\item There exists $\alpha \in S$ such that at least one imaginary coefficient of $\alpha$ is even.
\end{enumerate}
\end{enumerate}
\end{enumerate}
\end{Prop}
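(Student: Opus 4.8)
### Proof Proposal

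The plan is to unwind Definition~\ref{def:reduced set}(ii) directly. Condition (i) is assumed throughout, so in every case the work is to show that condition (ii)---the statement that $S+a \not\subseteq nR$ for all $a \in \Z$ and all $n \geq 2$---is equivalent to the stated arithmetic condition on the imaginary coefficients. The first observation to make is that it suffices to treat the case $n = p$ prime: if $S + a \subseteq nR$ for some composite $n$, then $S + a \subseteq pR$ for any prime $p \mid n$; conversely, $pR \supseteq nR$ only goes the wrong way, so one does need $n$ prime, but that is exactly what the ``for all $n \geq 2$'' quantifier reduces to. So $S$ fails to be reduced iff there is a prime $p$ and an integer $a$ with $S + a \subseteq pR$.

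For part (1), with $R = \LQ$: an element $\beta + a = (b_0 + a) + b_1\bfi + b_2\bfj + b_3\bfk$ lies in $p\LQ$ iff $p$ divides all four of $b_0 + a, b_1, b_2, b_3$. Since all elements of $S$ share the same real part $b_0$ (noted in the text after Example~\ref{ex:reduced sets}) and the same norm, choosing $a \equiv -b_0 \pmod p$ handles the real part uniformly. Hence $S + a \subseteq p\LQ$ for a suitable $a$ iff $p$ divides every imaginary coefficient of every element of $S$. Negating: $S$ is reduced iff for every prime $p$ there is some $\beta \in S$ with $p \nmid b_i$ for some $i \in \{1,2,3\}$. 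This is exactly the claimed equivalence; the only point needing care is the reduction to primes and the use of the common real part, both of which are routine.

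For part (2)(a), $S \subsetneq \HQ\setminus\LQ$: here every element is $(b_0 + b_1\bfi + b_2\bfj + b_3\bfk)/2$ with all $b_i$ odd. One checks that $(\beta+a)/1 \in 2\HQ$ is impossible---$2\HQ \subseteq \LQ$, but $\beta + a \notin \LQ$ since its coefficients stay half-integers---so the prime $p = 2$ never causes a failure of condition (ii), which is why the statement quantifies only over odd primes. For odd $p$, $\beta + a \in p\HQ$: since $p$ is odd, $p\HQ \cap (\HQ\setminus\LQ)$ consists of those half-integer quaternions all of whose ``numerators'' $b_i$ (after clearing the $2$) are divisible by $p$; equivalently, writing $2(\beta+a) = (b_0 + 2a) + b_1\bfi + \cdots$, we need $p \mid b_0 + 2a$ and $p \mid b_1, b_2, b_3$. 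Again the real part is common and $2$ is invertible mod $p$, so a suitable $a$ exists iff $p$ divides all imaginary coefficients $b_i$ across all of $S$. Negating gives (2)(a).

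Part (2)(b), $S \subsetneq \LQ$ but working inside $R = \HQ$, is where the subtlety lies and is the main obstacle. For odd primes $p$, the condition $S + a \subseteq p\HQ$ with $S \subseteq \LQ$: since $p$ is odd, $p\HQ = p\LQ$ intersected appropriately---more precisely an element of $\LQ$ lies in $p\HQ$ iff it lies in $p\LQ$ (because $\HQ/\LQ$ is $2$-torsion and $p$ is odd, clearing denominators shows $p\HQ \cap \LQ = p\LQ$). So the odd-prime part of condition (ii) for $\HQ$ is identical to that for $\LQ$, giving clause (i). The new phenomenon is $p = 2$: an element $\alpha + a \in \LQ$ can lie in $2\HQ$ without lying in $2\LQ$, namely exactly when $\alpha + a = 2\bfh'$ for a Hurwitz unit-type element, i.e.\ when all four coordinates of $\alpha + a$ are odd. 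As the last bullet of Example~\ref{ex:reduced sets} explains, if every imaginary coefficient of every $\alpha \in S$ is odd, then taking $a$ to be $0$ or $1$ according to the parity of the common real part puts $S + a$ entirely inside $2\HQ$. Conversely if some $\alpha \in S$ has an even imaginary coefficient, then for no choice of $a$ can all coordinates of all translated elements be odd, so $S + a \not\subseteq 2\HQ$; combined with clause (i) covering odd primes and with $2\LQ \subseteq 2\HQ$ making the $\LQ$-reducedness hypothesis necessary, this yields that $S$ is $\HQ$-reduced iff (i) and (ii) both hold. I would write this last case most carefully, isolating the claim $p\HQ \cap \LQ = p\LQ$ for odd $p$ and the characterization ``$\gamma \in 2\HQ \setminus 2\LQ$, $\gamma \in \LQ$'' iff ``all coordinates of $\gamma$ odd'' as two small lemmas, since everything else then falls out by the same real-part bookkeeping used in parts (1) and (2)(a).
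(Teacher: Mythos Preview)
Your proposal is correct and follows essentially the same route as the paper: reduce condition (ii) to prime moduli, then analyze the coefficients of $\alpha+a$ directly, using that all elements of $S$ share a common real part. Parts (1) and (2)(a) match the paper's argument almost exactly.

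The only noteworthy difference is in (2)(b). In the contrapositive direction (assuming $S$ is $\LQ$-reduced but not $\HQ$-reduced, deduce that every imaginary coefficient of every element of $S$ is odd), the paper first pins down one element $\beta$ with all odd imaginary coefficients and then propagates this to every $\alpha\in S$ via the norm identity $N(\alpha)=N(\beta)$ taken modulo $4$. Your sketch instead leans on the common real part: once $\beta+a\in 2\HQ\setminus 2\LQ$ forces $b_0+a$ to be odd, every $\alpha+a$ has odd real part, so $\alpha+a\in 2\HQ$ forces all coordinates of $\alpha+a$ odd. Both arguments are valid; yours is arguably cleaner since it avoids the mod-$4$ norm computation. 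Do be careful when you write up the forward direction of (2)(b): the step ``some $\alpha$ has an even imaginary coefficient, so $S+a\not\subseteq 2\HQ$'' needs the case split on the parity of $b_0+a$ (when $b_0+a$ is even, you must invoke (i) to rule out $S+a\subseteq 2\LQ$), which you allude to but do not spell out. Your plan to isolate the two small lemmas $p\HQ\cap\LQ=p\LQ$ for odd $p$ and the description of $2\HQ\cap\LQ$ is exactly the right way to make this airtight.
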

\begin{proof}
For each part, we prove the contrapositive statement.

(1) $(\Rightarrow)$ Assume there exists a prime $p$ such that $p$ divides every imaginary coefficient of every element of $S$. Let $b_0$ be the real part of each element of $S$. Then, $S - b_0 \subseteq pR$, so $S$ is not reduced.

$(\Leftarrow)$ Assume that $S$ is not reduced. Then, there exist $a, n \in \Z$ with $n \geq 2$ and such that $S+a \subseteq nR$. Let $p$ be a prime divisor of $n$. Then, $S+a \subseteq pR$, which means that $p$ divides each imaginary coefficient of every element of $S$.

(2a) This is similar to the proof of (1). First, assume there exists a prime $p$ such that for every $(b_0 + b_1\bfi + b_2\bfj + b_3\bfk)/2 \in S$, $p$ divides $b_i$, $1 \leq i \leq 3$. Necessarily, $p$ must be odd. Let $a \in \Z$ such that $b_0 + 2a = p$. Then, $S+a \subseteq pR$ and $S$ is not reduced.

Conversely, assume that $S$ is not reduced. As in the proof of (1), $S+a \subseteq pR$ for some $a \in \Z$ and some prime $p$. Given $\beta = (b_0 + b_1\bfi + b_2\bfj + b_3\bfk)/2 \in S$, we have 
\begin{equation*}
b_0 + 2a + b_1\bfi + b_2\bfj + b_3\bfk = 2(\beta+a) \in 2pR.
\end{equation*}
Let $\gamma = c_0+c_1\bfi+c_2\bfj+c_3\bfk \in R$ be such that $2(\beta+a) = 2p\gamma$. Then, $b_i = 2pc_i$ for all $1 \leq i \leq 3$. Since $2c_i \in \Z$, this shows that $p \mid b_i$ for each $1 \leq i \leq 3$, and $p$ must be odd because $\beta \in \HQ\setminus\LQ$.

(2b) $(\Rightarrow)$ Certainly, if $S$ is not reduced with respect to $\LQ$, then $S$ is not reduced with respect to $\HQ$. Also, as pointed out at the end of Example \ref{ex:reduced sets}, if every imaginary coefficient of every element of $S$ is odd, then $S$ is not reduced with respect to $\HQ$.

$(\Leftarrow)$ Assume that $S$ is not reduced with respect to $\HQ$. If $S$ is not reduced with respect to $\LQ$, then the negation of (i) holds and we are done. So, assume that $S$ is reduced with respect to $\LQ$. Find $a \in \Z$ and a prime $p$ such that $S+a \subseteq pR$. By (1), there exists $\beta = b_0+b_1\bfi+b_2\bfj+b_3\bfk \in S$ such that $p \nmid b_i$ for some $1 \leq i \leq 3$. Without loss of generality, assume that $p \nmid b_1$.

Let $\gamma = c_0+c_1\bfi+c_2\bfj+c_3\bfk \in R$ such that $\beta+a=p\gamma$. Then, $b_1=pc_1$. Since $p$ does not divide $b_1$, the only way this can be true is if $p=2$ and $c_1 \in \Z+\tfrac{1}{2}$. Consequently, $c_i \in \Z+\tfrac{1}{2}$ for all $0 \leq i\leq 3$, and so $b_i = 2c_i$ is odd for all $1 \leq i \leq 3$. 

Finally, let $\alpha = b_0+a_1\bfi+a_2\bfj+a_3\bfk \in S$. Then, $N(\alpha)=N(\beta)$. Considering these norms modulo 4 and recalling that $b_i$ is odd for $1 \leq i \leq 3$, we have
\begin{equation*}
a_1^2 + a_2^2 + a_3^2 \equiv b_1^2 + b_2^2 + b_3^2 \equiv 3 \pmod{4}.
\end{equation*}
This shows that $a_i$ is odd for all $1 \leq i \leq 3$. Since $\alpha \in S$ was arbitrary, the negation of (ii) holds.
\end{proof}

We record as a corollary the portions of Proposition \ref{prop:reduced equivalent def} that we will reference most often.

\begin{Cor}\label{cor:prime factors}
Let $S \subsetneq R$ be reduced. By Remark \ref{Disjoint classes}, either $S \subsetneq \HQ\setminus\LQ$ or $S \subsetneq \LQ$.
\begin{enumerate}[(1)]
\item If $S \subsetneq \LQ$, then for each prime $p$, there exists $\beta = b_0 + b_1 \bfi + b_2 \bfj + b_3\bfk \in S$ such that $p \nmid b_i$ for some $1 \leq i \leq 3$.
\item If $S \subsetneq \HQ\setminus\LQ$, then for each odd prime $p$, there exists $\beta = (b_0 + b_1 \bfi + b_2 \bfj + b_3\bfk)/2 \in S$ such that $p \nmid b_i$ for some $1 \leq i \leq 3$.
\end{enumerate}
\end{Cor}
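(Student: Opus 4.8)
The plan is to derive this statement directly from Proposition \ref{prop:reduced equivalent def}, which already isolates, for each type of minimal polynomial class, a divisibility condition on the imaginary coefficients that is equivalent to being reduced. The corollary is just the ``only if'' direction of that equivalence in the cases we will use most, so no new computation is needed; the only point requiring care is the bookkeeping over whether the ambient ring $R$ is $\LQ$ or $\HQ$.

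For part (1), suppose $S \subsetneq \LQ$, and split on $R$. If $R = \LQ$, then since $S$ is reduced, Proposition \ref{prop:reduced equivalent def}(1) applies verbatim and produces, for each prime $p$, an element $\beta = b_0 + b_1\bfi + b_2\bfj + b_3\bfk \in S$ with $p \nmid b_i$ for some $1 \le i \le 3$. If instead $R = \HQ$, then because $S$ is reduced with respect to $\HQ$, condition (i) in Proposition \ref{prop:reduced equivalent def}(2b) tells us that $S$ is also reduced with respect to $\LQ$; now Proposition \ref{prop:reduced equivalent def}(1) again yields the desired $\beta$ for every prime $p$.

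For part (2), suppose $S \subsetneq \HQ\setminus\LQ$. Since $S$ is reduced it is nonempty, and every element of $\HQ\setminus\LQ$ has all coordinates in $\Z + \tfrac{1}{2}$, hence fails to lie in $\LQ$; so this case can occur only when $R = \HQ$. Then Proposition \ref{prop:reduced equivalent def}(2a) applies, and its ``only if'' direction gives, for each odd prime $p$, an element $\beta = (b_0 + b_1\bfi + b_2\bfj + b_3\bfk)/2 \in S$ with $p \nmid b_i$ for some $1 \le i \le 3$, which is exactly the assertion.

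There is no genuine obstacle here: all of the substance lives in Proposition \ref{prop:reduced equivalent def}, and this corollary is merely the convenient half of it that recurs throughout the rest of the paper. The one thing to watch is that in part (1) the conclusion is needed for \emph{every} prime $p$, including $p = 2$; this is precisely why, in the $R = \HQ$ case, we route the argument through the implication ``reduced with respect to $\HQ$ $\Rightarrow$ reduced with respect to $\LQ$'' rather than attempting to treat $p = 2$ separately by hand.
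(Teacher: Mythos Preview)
Your proof is correct and matches the paper's approach: the paper states the corollary without a separate proof, presenting it simply as a restatement of the forward implications in Proposition~\ref{prop:reduced equivalent def}. Your case analysis---in particular, routing the $R = \HQ$, $S \subsetneq \LQ$ case through condition (i) of Proposition~\ref{prop:reduced equivalent def}(2b) to recover the conclusion for all primes including $p=2$---is exactly the right way to unpack that implicit argument.
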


Regardless of whether we work over $\LQ$ or $\HQ$, it suffices to consider reduced sets when looking for ringsets within minimal polynomial classes. In order to prove this---and to help decide when reduced sets are ringsets---we need one more definition.

\begin{Def}\label{def:Gamma(S)}
Let $S \subseteq R$ be nonempty. We define $\Delta(S) := \{\alpha - \beta \mid \alpha, \beta \in S \}$. When $|S| \geq 2$, we set $\Gamma(S) := \gcd(\{N(\delta) \mid \delta \in \Delta(S)\})$. If $S$ is a singleton set, then $\Delta(S) =\{0\}$. In this case, we define $\Gamma(S) := 0$ to avoid complications with the definition of $\gcd(\{0\})$.
\end{Def}

\begin{Rem}\label{rem:Gamma}
When $S \subseteq \LQ$ is reduced, $\Gamma(S)$ will always be even. Indeed, let $\alpha = a_0 + a_1\bfi + a_2\bfj + a_3\bfk$ and $\beta = a_0 + b_1\bfi + b_2\bfj + b_3\bfk$ be elements of $S$. Then, $a_1^2+a_2^2+a_3^2 = b_1^2+b_2^2+b_3^2$, so
\begin{equation*}
N(\alpha-\beta) = (a_1-b_1)^2+(a_2-b_2)^2+(a_3-b_3)^2 = 2N(\alpha-a) - 2(a_1b_1 + a_2b_2 + a_3b_3).
\end{equation*}
In contrast, $\Gamma(S)$ can be odd if $S \subseteq \HQ\setminus\LQ$, as demonstrated by $S = \{(1+\bfi+\bfj+\bfk)/2, (1+\bfi+\bfj-\bfk)/2\}$ with $\Gamma(S)=1$.
\end{Rem}

The quantity $\Gamma(S)$ is what we will ultimately use to classify reduced subsets of $R$ that are ringsets. We pause to give some motivation for this definition. When $S \subseteq \mcC_R(m)$ and $f \in \Int(S,R)$, we can divide $f$ by $m$ to get $f=qm+r$, where $q,r \in \D[x]$ and the remainder polynomial $r$ is in $\Int(S,R)$ and has the form $r(x)=\gamma_1x + \gamma_0$. As we show in Section \ref{sec:reduced ringsets}, we can decide whether $S$ is a ringset by studying the polynomials $r$. The subjects of Definition \ref{def:Gamma(S)} arise in the following way. Given $\alpha, \beta \in S$, we have $f(\alpha)-f(\beta) = r(\alpha)-r(\beta) = \gamma_1(\alpha-\beta)$. Considering all such differences as $\alpha$ and $\beta$ run through $S$ leads to the set $\Delta(S)$, and then to $\Gamma(S)$.

Our first use of $\Gamma(S)$ is to prove that any nonempty subset of a minimal polynomial class has an associated reduced subset.

\begin{Prop}\label{prop:reduced set exists}
Let $S \subsetneq R$ be nonempty. Assume that $S \subseteq \mcC_R(m)$ for some monic quadratic $m \in \Z[x]$. Then, there exists a reduced set $T \subsetneq R$ such that $S = a+nT$ for some $a, n \in \Z$ with $n \geq 1$. Moreover, $S$ is a ringset if and only if $T$ is a ringset.
\end{Prop}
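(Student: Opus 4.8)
The plan is to obtain $T$ by repeatedly ``dividing out'' integers from $S$, organized as an induction on a positive-integer invariant that strictly decreases at each division step. Since $S \subseteq \mcC_R(m)$ with $m(x) = x^2 - 2a_0x + N$ monic quadratic, every $\alpha \in S$ has real part $a_0$, norm $N$, and is not rational. I would track the invariant $\nu(S) := 4(N - a_0^2)$. Writing $\alpha - a_0 = c_1\bfi + c_2\bfj + c_3\bfk$ (with each $2c_i \in \Z$), we have $N - a_0^2 = N(\alpha - a_0) = c_1^2 + c_2^2 + c_3^2$, so $\nu(S) = (2c_1)^2 + (2c_2)^2 + (2c_3)^2$; this is a positive integer (nonzero because $\alpha \notin \Z$) and is independent of $\alpha \in S$. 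The claim will then be proved by strong induction on $\nu(S)$, ranging over all nonempty $S \subsetneq R$ contained in a quadratic minimal polynomial class.

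For the induction, if $S$ is already reduced I would take $T = S$, $a = 0$, $n = 1$. Otherwise, since condition (i) of Definition \ref{def:reduced set} holds, condition (ii) must fail, so there are $a_1 \in \Z$ and an integer $n_1 \geq 2$ with $S + a_1 \subseteq n_1R$; put $S' := \tfrac{1}{n_1}(S + a_1) \subseteq R$. The routine verifications I would carry out are: (a) $S' \cap \Z = \varnothing$ (from $\tfrac{1}{n_1}(\alpha + a_1) \in \Z \Rightarrow \alpha \in \Z$, contradicting that $m$ is quadratic), so $S'$ is a nonempty proper subset of $R$; (b) every element of $S'$ has real part $\tfrac{a_0+a_1}{n_1}$ and norm $\tfrac{1}{n_1^2}\bigl((a_0+a_1)^2 + (N - a_0^2)\bigr)$, each the same across $S'$, so all elements of $S'$ share a minimal polynomial $m'$, which is monic with integer coefficients because $S' \subseteq R$ and is quadratic by (a); hence $S' \subseteq \mcC_R(m')$; (c) $|S'| = |S|$; (d) $\nu(S') = \tfrac{1}{n_1^2}\nu(S) < \nu(S)$. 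Applying the induction hypothesis to $S'$ yields a reduced $T \subsetneq R$ and integers $a', n'$ with $n' \geq 1$ and $S' = a' + n'T$; then $S = n_1S' - a_1 = (n_1a' - a_1) + (n_1n')T$, which is of the desired form with $a := n_1a' - a_1 \in \Z$ and $n := n_1n' \geq 1$.

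The ``moreover'' clause is then immediate from Lemma \ref{lem:translation}: because $n \geq 1 \neq 0$ and $a \in \Z$, $T$ is a ringset iff $nT$ is a ringset iff $nT + a = S$ is a ringset.

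I expect the only real obstacle to be identifying the invariant $\nu$ and checking it behaves correctly: it must be a positive integer (so the induction is well-founded) that strictly decreases under the division step. One could alternatively use $\Gamma(S)$ when $|S| \geq 2$ --- indeed $\Gamma(S') = n_1^{-2}\Gamma(S)$ --- but $\Gamma$ is $0$ on singletons, so $\nu$ is preferable for a uniform treatment. The other point needing a little care is verification (b), that $S'$ again lands inside a single quadratic minimal polynomial class so the induction hypothesis applies; this relies on every element of $S$ having the same real part and the same norm.
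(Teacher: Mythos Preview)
Your proof is correct and follows the same core idea as the paper's: iterate the ``divide out an integer'' step and terminate via a strictly decreasing positive-integer invariant, then invoke Lemma \ref{lem:translation}. The difference is in the bookkeeping. The paper treats singletons by an explicit construction and, for $|S|\geq 2$, uses $\Gamma(S)$ as the invariant (noting $\Gamma(S)=n_1^2\Gamma(T_1)$). You instead use $\nu(S)=4(N-a_0^2)$, which is a positive integer for every $S$ in a quadratic class and satisfies $\nu(S')=\nu(S)/n_1^2$, so a single strong induction handles all cases at once---including singletons---without a separate analysis. Your route is a bit cleaner and avoids the case split; the paper's route has the incidental benefit of previewing the relation $\Gamma(S)=n_1^2\Gamma(T_1)$ that motivates $\Gamma$'s role later. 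Your checks (a)--(d) are exactly the right verifications, and your remark that $\Gamma$ fails as a uniform invariant because $\Gamma(\{\alpha\})=0$ is precisely why the paper needs the singleton case handled separately.
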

\begin{proof}
First, consider the case where $S=\{\alpha\}$ is a singleton set. If $\alpha \in \LQ$, then let $\alpha = a_0 + a_1 \bfi + a_2 \bfj + a_3 \bfk$, where each $a_i \in \Z$. Let $n=\gcd(a_1,a_2,a_3)$ and $\beta = (\alpha-a_0)/n$. If $R = \LQ$, then $\{\beta\}$ is reduced. We can take $T =\{\beta\}$ and then $S = a_0 + nT$. If $R = \HQ$, then write $\beta = b_1\bfi+b_2\bfj+b_3\bfk$ for some $b_1, b_2, b_3 \in \Z$. Necessarily, $\gcd(b_1,b_2,b_3)=1$. If some $b_i$ is even, then $\{\beta\}$ is reduced and we can again take $T=\{\beta\}$. On the other hand, if each $b_i$ is odd, then $(1+\beta)/2 \in \HQ$ and $\{(1+\beta)/2\}$ is reduced. In this case, we take $T = \{(1+\beta)/2\}$ and then $S = a_0-n + 2nT$.

Next, assume that $S = \{\alpha\}$ and $\alpha \in \HQ\setminus\LQ$. Then, $\alpha = (a_0+a_1\bfi+a_2\bfj+a_3\bfk)/2$ for some odd integers $a_i$, $0 \leq i \leq 3$. Let $n=\gcd(a_1,a_2,a_3)$, which is odd. There exists $a \in \Z$ such that $2a+a_0=n$. Let $\beta = (\alpha+a)/n = (1+(a_1/n)\bfi+(a_2/n)\bfj+(a_3/n)\bfk)/2$. Then, $T = \{\beta\}$ is reduced and $S=-a+nT$.

From here, assume that $|S| \geq 2$. If $S$ is reduced, then we are done. If not, then $S = a_1+n_1T_1$ for some $T_1 \subseteq R$ and $a_1,n_1 \in \Z$ with $n_1 \geq 2$. By construction, each element of $T_1$ has the same minimal polynomial. Observe that $n_1$ divides each element of $\Delta(S)$, and in fact $\Delta(S) = n_1 \Delta(T_1)$. It follows that $\Gamma(S) = n_1^2 \Gamma(T_1)$. Since both $\Gamma(S)$ and $\Gamma(T_1)$ are positive integers and $n_1 \geq 2$, $\Gamma(S) > \Gamma(T_1)$. If $T_1$ is reduced, then we are done. If not, then we may repeat the above argument with $T_1$ to get $T_1 = a_2 + n_2 T_2$ for some $T_2 \subseteq R$ and $a_2, n_2 \in \Z$. This procedure cannot continue indefinitely, because the sequence $\{\Gamma(S)$, $\Gamma(T_1)$, $\Gamma(T_2)$, $\ldots\}$ is strictly decreasing. Thus, after a finite number of steps, we can express $S$ as $S = a +nT$, where $T$ is reduced. The final claim follows from Lemma \ref{lem:translation}.
\end{proof}

\section{Classifying Reduced Sets that are Ringsets}\label{sec:reduced ringsets}

In this section, we will use $\Gamma(S)$ to determine when a reduced subset $S$ of $R$ is a ringset. Our general strategy is to examine the linear polynomials in $\Int(S,R)$, and consider their effect not just on elements of $S$, but on the set of all conjugates of elements of $S$.

\begin{Def}\label{def:S star}
For $S \subseteq R$, we define $S^\ast:=\{u \alpha u^{-1} \mid \alpha \in S, u \in R^\times\}$.
\end{Def}

\begin{Lem}\label{lem:difference of conjugates}
Let $\alpha \in R$ and $u \in R^\times$. 
\begin{enumerate}[(1)]
\item Assume $R = \LQ$. Then, $\alpha - u \alpha u^{-1} \in 2R$. 
\item Assume $R = \HQ$ and let $I$ be the two-sided ideal of $R$ generated by $1+\bfi$. Then, $\alpha - u \alpha u^{-1} \in I$. Moreover, if $\beta \in R$ and $N(\beta)$ is even, then $\beta \in I$.
\end{enumerate}
\end{Lem}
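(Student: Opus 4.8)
The plan is to treat the two cases separately and, in each, reduce to a computation with conjugation by units.

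\textbf{Case $R = \LQ$.} Since $\LQ^\times = Q_8 = \{\pm 1, \pm\bfi, \pm\bfj, \pm\bfk\}$, it suffices (because conjugation by $-u$ equals conjugation by $u$, and conjugation by $1$ is trivial) to check that $\alpha - u\alpha u^{-1} \in 2\LQ$ for $u \in \{\bfi, \bfj, \bfk\}$. Writing $\alpha = a_0 + a_1\bfi + a_2\bfj + a_3\bfk$ with $a_i \in \Z$, a direct computation gives, for instance with $u = \bfi$, that $\bfi\alpha\bfi^{-1} = a_0 + a_1\bfi - a_2\bfj - a_3\bfk$, so $\alpha - \bfi\alpha\bfi^{-1} = 2a_2\bfj + 2a_3\bfk \in 2\LQ$; the cases $u = \bfj$ and $u = \bfk$ are symmetric. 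I would present just the $u=\bfi$ computation and note the other two are analogous by the symmetry of the quaternion relations. This disposes of part (1).

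\textbf{Case $R = \HQ$.} First I would record basic facts about the ideal $I = (1+\bfi)$: since $(1+\bfi)(1-\bfi) = 1 - \bfi^2 = 2$, we have $2 \in I$, and since $N(1+\bfi) = 2$, the ideal $I$ lies over the prime $2$. Because $\HQ$ is a maximal order in $\D$ and $2$ is ramified, $I$ is the unique two-sided ideal above $2$ with $I^2 = 2\HQ$ and $\HQ/I \cong \F_2$; I would either cite \cite[Chap.\ 11]{Voight} for this or argue it directly by noting $|\HQ/2\HQ| = 16$ and $I/2\HQ$ has order $2$ inside it. For the first claim of part (2), I would again reduce to checking generators of $\HQ^\times$. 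Since $\HQ^\times$ is generated by $\bfi$ and $\bfh = (1+\bfi+\bfj+\bfk)/2$, and the set of $u$ for which $\alpha - u\alpha u^{-1} \in I$ for \emph{all} $\alpha$ is closed under products (because $\alpha - uv\alpha(uv)^{-1} = (\alpha - u\alpha u^{-1}) + u(\alpha - v\alpha v^{-1})u^{-1}$ and $I$ is a two-sided ideal), it suffices to verify the membership for $u = \bfi$ and $u = \bfh$. For $u = \bfi$, by the Lipschitz computation above $\alpha - \bfi\alpha\bfi^{-1} \in 2\HQ \subseteq I$ (note this works even for $\alpha \in \HQ\setminus\LQ$, since the half-integer parts cancel in the same way). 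For $u = \bfh$, I would compute $\bfh\alpha\bfh^{-1}$ on the basis $\{1, \bfi, \bfj, \bfk\}$ — conjugation by $\bfh$ cyclically permutes $\bfi \to \bfj \to \bfk \to \bfi$ — so that $\alpha - \bfh\alpha\bfh^{-1} = a_1(\bfi - \bfj) + a_2(\bfj - \bfk) + a_3(\bfk - \bfi)$, and then check each of $\bfi - \bfj$, $\bfj-\bfk$, $\bfk-\bfi$ lies in $I$; this follows since each has norm $2$, which brings us to the second claim.

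\textbf{The "moreover" claim.} To show that $N(\beta)$ even implies $\beta \in I$: since $\HQ/I \cong \F_2$ and $I^2 = 2\HQ$, an element $\beta \notin I$ is a unit mod $I$, hence $N(\beta)$ is a unit mod $2$, i.e.\ odd; contrapositively $N(\beta)$ even forces $\beta \in I$. Alternatively, and more elementarily, I would argue by cases on whether $\beta \in \LQ$ or $\beta \in \HQ\setminus\LQ$: if $\beta = b_0 + b_1\bfi + b_2\bfj + b_3\bfk \in \LQ$, then $N(\beta)$ even means an even number of the $b_i$ are odd, and one checks directly (finitely many residue patterns mod $2$) that $\beta$ is then an $\HQ$-combination of $2$ and elements like $1+\bfi$, $\bfi+\bfj$, etc., all in $I$; if $\beta \in \HQ\setminus\LQ$ then $N(\beta)$ is always odd (all four half-integer coordinates are odd, so $N(\beta) \equiv 4\cdot\frac14 = 1 \bmod 2$... more carefully $N(\beta) = (b_0^2+b_1^2+b_2^2+b_3^2)/4$ with all $b_i$ odd, giving $N(\beta) \equiv 1 \bmod 2$), so the hypothesis is vacuous there.

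\textbf{Main obstacle.} The only real subtlety is handling the structure of $I$ cleanly — whether to invoke the theory of ramified primes in maximal quaternion orders from \cite{Voight} or to give the self-contained $\F_2$-linear-algebra argument on $\HQ/2\HQ$. I would lean toward the latter for a self-contained proof: identify $I/2\HQ$ explicitly as the $\F_2$-span of the images of $1+\bfi$, $1+\bfj$, $1+\bfk$ (a $3$-dimensional subspace of the $4$-dimensional $\HQ/2\HQ$), verify it is closed under left and right multiplication by $\bfi, \bfj, \bfk$, and then everything else — the conjugation memberships and the norm criterion — becomes a finite check over $\F_2$.
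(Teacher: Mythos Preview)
Your approach is sound but considerably more hands-on than the paper's. The paper dispatches both parts with a single observation: the relevant quotient ring is \emph{commutative}. For (1), $\LQ/2\LQ$ is a commutative ring of order $16$, so $\alpha \equiv u\alpha u^{-1} \pmod{2\LQ}$ automatically for every unit $u$. For (2), $\HQ/I \cong \F_4$, which is again commutative, giving $\alpha - u\alpha u^{-1} \in I$ at once; and since $\HQ/I$ is a field, $N(\beta) = \beta\overline{\beta} \equiv 0$ forces $\beta \equiv 0$. No generator-by-generator checking is needed. Your route---reduce to generators of $R^\times$ and compute---also works and is perhaps more concrete, but it trades a one-line structural argument for several explicit verifications.

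Two slips to fix. First, $\HQ/I \cong \F_4$, not $\F_2$: the index is $[\HQ:I] = N(1+\bfi)^2 = 4$, and your text is internally inconsistent (you say $I/2\HQ$ has order $2$ in one place and is $3$-dimensional in another; in fact $|I/2\HQ| = 4$). This is harmless for your norm argument, which only needs the quotient to be a field. Second, your claim that $\alpha - \bfi\alpha\bfi^{-1} \in 2\HQ$ is false when $\alpha \in \HQ\setminus\LQ$: for $\alpha = \bfh$ you get $\bfj+\bfk$, which lies in $I$ but not in $2\HQ$. Likewise, in $\alpha - \bfh\alpha\bfh^{-1} = a_1(\bfi-\bfj)+a_2(\bfj-\bfk)+a_3(\bfk-\bfi)$ the individual summands need not lie in $\HQ$ when the $a_i$ are half-integers; you should instead observe that the differences $a_i - a_j$ are integers and rewrite the expression as an integer combination of, say, $\bfi-\bfj$ and $\bfj-\bfk$ before concluding membership in $I$.
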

\begin{proof}
When $R = \LQ$, $R/2R$ is a commutative ring of order 16. In this residue ring, $\alpha - u \alpha u^{-1} \equiv \alpha - \alpha \equiv 0$. Similarly, when $R = \HQ$, $R/I \cong \F_4$ is commutative, and hence $\alpha - u \alpha u^{-1} \in I$. In this case, if $\beta \in R$ and $N(\beta)$ is even, then $\beta \cdot \overline{\beta} \equiv 0$ in $R/I$. Since $R/I$ is a field, this means that $\beta \in I$.
\end{proof}

\begin{Lem}\label{lem:S star and linear polys}
Let $S \subsetneq R$ be nonempty. Assume that $S \subseteq \mcC_R(m)$ for some monic quadratic $m \in \Z[x]$.
\begin{enumerate}[(1)]
\item If every linear polynomial in $\Int(S,R)$ is in $\Int(S^\ast, R)$, then $S$ is a ringset.

\item Let $r(x) = \gamma_1 x + \gamma_0 \in \Int(S,R)$.
\begin{enumerate}[(a)]
\item Assume $R=\LQ$. If $2 \gamma_1 \in R$, then $r \in \Int(S^\ast,R)$.
\item Assume $R=\HQ$. If $\gamma_1(1+\bfi) \in R$, then $r \in \Int(S^\ast,R)$.
\end{enumerate}
\end{enumerate}
\end{Lem}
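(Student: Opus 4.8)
The plan is to attack part (1) first, since it is the conceptual heart of the argument, and then prove part (2) as a short computation about conjugates. For part (1), suppose every linear polynomial in $\Int(S,R)$ also lies in $\Int(S^\ast,R)$. Given $f, g \in \Int(S,R)$, I want to show $fg \in \Int(S,R)$, i.e.\ that $(fg)(\alpha) \in R$ for every $\alpha \in S$. The key idea is that the obstruction to $fg$ being integer-valued comes from the fact that evaluation is not multiplicative: $(fg)(\alpha) \ne f(\alpha)g(\alpha)$ in general. Write $g = q_g m + r_g$ with $r_g(x) = \gamma_1 x + \gamma_0 \in \Int(S,R)$ the linear remainder (this uses $m$ monic, so division in $\D[x]$ works, and $m(\alpha) = 0$ for $\alpha \in \mcC_R(m)$, forcing $r_g \in \Int(S,R)$). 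Since $m$ has central (integer) coefficients, $(q_g m)(\alpha) = q_g(\alpha)m(\alpha) = 0$ and more importantly $(f q_g m)(\alpha) = (fq_g)(\alpha)m(\alpha) = 0$ for $\alpha \in S$; hence $(fg)(\alpha) = (f r_g)(\alpha)$ on $S$, and we reduce to showing $f r_g \in \Int(S,R)$ for all $f \in \Int(S,R)$ and all linear $r_g \in \Int(S,R)$.

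The next step is to compute $(f r_g)(\alpha)$ explicitly. With $r_g(x) = \gamma_1 x + \gamma_0$, we have $(f r_g)(x) = f(x)\gamma_1 x + f(x)\gamma_0$, so $(f r_g)(\alpha) = \big(\text{coefficient-shifted } f\big)$ evaluated at $\alpha$ — more precisely, writing $f(x) = \sum_i a_i x^i$, $(f r_g)(\alpha) = \sum_i a_i \gamma_1 \alpha^{i+1} + \sum_i a_i \gamma_0 \alpha^i$. Comparing with $f(\alpha)r_g(\alpha) = \big(\sum_i a_i \alpha^i\big)(\gamma_1 \alpha + \gamma_0)$, the two differ by terms of the shape $a_i[\alpha^i, \gamma_1]\alpha$, i.e.\ by commutators of powers of $\alpha$ with $\gamma_1$. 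Here is where $S^\ast$ enters: each power $\alpha^j$ lies in the $\Q$-algebra $\Q[\alpha] \subseteq \D$, which is a field (quadratic over $\Q$), and an element $\beta$ of this field can be written in terms of $1$ and $\alpha$; the commutator $[\beta,\gamma_1]$ then only involves $[\alpha,\gamma_1]$, and the upshot — the calculation I would organize carefully — is that $(fr_g)(\alpha) - f(\alpha)r_g(\alpha)$ is an $R$-linear combination (on suitable sides) of expressions $\gamma_1 \alpha - u\gamma_1 u^{-1}\cdot(\text{something})$, or more directly that controlling $r_g$ on all conjugates $u\alpha u^{-1}$ is exactly what is needed to control these commutator terms. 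I expect the cleanest route is: show $(fr_g)(\alpha) \in R$ for all $f \in \Int(S,R)$ iff $r_g(u\alpha u^{-1}) \in R$ for all units $u$, by relating conjugation $\alpha \mapsto u\alpha u^{-1}$ to the inner automorphism that $f(\alpha)$ can induce; this is presumably the content already latent in the paper's earlier remark that if the coefficients of $g$ are central then $(fg)(\alpha) = f(\alpha)g(\alpha)$, applied after absorbing the non-centrality of $\gamma_1$ into a conjugation.

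For part (2), the task is concrete: given $r(x) = \gamma_1 x + \gamma_0 \in \Int(S,R)$ with $2\gamma_1 \in R$ (case $\LQ$) or $\gamma_1(1+\bfi) \in R$ (case $\HQ$), show $r(u\alpha u^{-1}) \in R$ for every $\alpha \in S$ and every unit $u \in R^\times$. Write $r(u\alpha u^{-1}) = r(\alpha) + \gamma_1(u\alpha u^{-1} - \alpha)$; since $r(\alpha) \in R$ already, it suffices that $\gamma_1(u\alpha u^{-1} - \alpha) \in R$. By Lemma \ref{lem:difference of conjugates}(1), $u\alpha u^{-1} - \alpha \in 2R$ when $R = \LQ$, so $\gamma_1(u\alpha u^{-1}-\alpha) \in \gamma_1 \cdot 2R = (2\gamma_1)R \subseteq R$; that settles case (a). For case (b), Lemma \ref{lem:difference of conjugates}(2) gives $u\alpha u^{-1} - \alpha \in I$, the two-sided ideal generated by $1+\bfi$, so $u\alpha u^{-1} - \alpha = \sum_k \lambda_k (1+\bfi)\rho_k$ with $\lambda_k, \rho_k \in R$ (or a one-sided such expression, since $I = R(1+\bfi)R$); then $\gamma_1(u\alpha u^{-1}-\alpha) = \sum_k \gamma_1\lambda_k(1+\bfi)\rho_k$. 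This is not immediately in $R$ unless we can move $\gamma_1$ past $\lambda_k$, so the honest statement is that we need $\gamma_1 I \subseteq R$, and since $\gamma_1(1+\bfi) \in R$ and $I$ is generated as a \emph{left} ideal by $1+\bfi$ together with $(1+\bfi)\bfj = $ another generator — here I would use the explicit structure of $I$ in $\HQ$ (it has index $4$, $R/I \cong \F_4$) to check that $\gamma_1 I \subseteq R$ follows from $\gamma_1(1+\bfi) \in R$, likely because $I = (1+\bfi)R = R(1+\bfi)$ as a two-sided ideal is principal and $2 = -\bfi(1+\bfi)^2 \in (1+\bfi)R$, so $\gamma_1 \cdot 2 \in R$ as well and $\gamma_1 I \subseteq \gamma_1(1+\bfi)R \subseteq R$. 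The main obstacle, then, is part (1): pinning down precisely how evaluating the linear remainder on the full conjugacy-closure $S^\ast$ captures all the commutator discrepancies between $(fg)(\alpha)$ and $f(\alpha)g(\alpha)$; the rest is bookkeeping with the known ideal structure of $\LQ$ and $\HQ$.
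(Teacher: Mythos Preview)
Your treatment of part (2) is essentially the paper's and is correct: for (2a) you use Lemma~\ref{lem:difference of conjugates}(1) exactly as the paper does, and for (2b) you correctly arrive at $I=(1+\bfi)R$ so that $\gamma_1(u\alpha u^{-1}-\alpha)\in\gamma_1(1+\bfi)R\subseteq R$.

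The gap is in part (1). You attempt to verify closure under multiplication directly, by analyzing the commutator defect $(fr_g)(\alpha)-f(\alpha)r_g(\alpha)$ and hoping to relate it to the values $r_g(u\alpha u^{-1})$; but you never carry this out, and the phrases ``I expect'' and ``presumably'' mark exactly where the argument stops. This route is in fact unnecessarily hard, and the paper avoids it entirely. The observation you are missing is that conjugation preserves minimal polynomials, so $S^\ast\subseteq\mcC_R(m)$. Consequently, if $f\in\Int(S,R)$ and $f=qm+r$ with $r$ of degree at most $1$, then $f(\beta)=r(\beta)$ for \emph{every} $\beta\in S^\ast$, not merely for $\beta\in S$. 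The hypothesis (that every linear $r\in\Int(S,R)$ lies in $\Int(S^\ast,R)$) therefore yields $f\in\Int(S^\ast,R)$ for all $f\in\Int(S,R)$, i.e.\ $\Int(S,R)=\Int(S^\ast,R)$. Since $S^\ast$ is a union of full conjugacy classes, it is already a ringset by Lemma~\ref{lem:basic ringsets}, and the proof is complete without ever examining a product $fg$. Your approach throws away the fact that the hypothesis controls $r$ on all conjugates at once, and then struggles to recover that control one product at a time.
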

\begin{proof}
(1) Assume that the stated condition holds. Note that $S^\ast$ is a union of full conjugacy classes in $R$, hence is a ringset by Lemma \ref{lem:basic ringsets}. We will show that $\Int(S,R) = \Int(S^\ast,R)$. Certainly, $\Int(S^\ast,R) \subseteq \Int(S,R)$ because $S \subseteq S^\ast$. For the reverse inclusion, let $f \in \Int(S,R)$. Divide $f$ by $m$ to get $f=qm+r$, where $q, r \in \D[x]$ and $r(x) = \gamma_1 x+ \gamma_0$ for some $\gamma_1, \gamma_0 \in \D$. Note that each element of $S^\ast$ has minimal polynomial $m$, so $f(\beta) = r(\beta)$ for all $\beta \in S^\ast$. In particular, $r \in \Int(S,R)$. Now, if $\gamma_1 = 0$, then $\gamma_0 \in R$ and hence $f \in \Int(S^\ast,R)$. Otherwise, $r$ is a linear polynomial in $\Int(S,R)$, and thus is in $\Int(S^\ast,R)$ by assumption. In this case, $f$ is once again in $\Int(S^\ast,R)$.

(2a) Assume that $2\gamma_1 \in R$. Let $\beta \in S^\ast$. Then, $\beta = u\alpha u^{-1}$ for some $\alpha \in S$ and some $u \in R^\times$. By Lemma \ref{lem:difference of conjugates}, $\alpha - \beta \in 2R$. Thus,
\begin{equation*}
r(\alpha) - r(\beta) = \gamma_1(\alpha - \beta) \in R.
\end{equation*}
By assumption, $r(\alpha) \in R$, so $r(\beta) \in R$ as well. Thus, $r \in \Int(S^\ast,R)$.

(2b) Let $I$ be the two-sided ideal of $\HQ$ generated by $1+\bfi$. It is well known that $1+\bfi$ generates $I$ as both a left ideal and a right ideal of $\HQ$. So, if $\varepsilon \in I$, then there exist $\varepsilon_1, \varepsilon_2 \in R$ such that $\varepsilon = \varepsilon_1(1+\bfi) = (1+\bfi)\varepsilon_2$. Proceeding as in part (2a), we can show that $\alpha - \beta \in (1+\bfi)R$, and hence $\gamma_1(\alpha-\beta) \in R$. The result follows.
\end{proof}

From here, we will use the quantity $\Gamma(S)$ to characterize reduced sets $S$ that are ringsets. We begin with some positive results.

\begin{Thm}\label{thm:Gamma is 1, 2, or 4}
Let $S \subsetneq R$ be reduced.
\begin{enumerate}[(1)]
\item If $\Gamma(S) = 1$, then $S$ is a ringset.
\item If $\Gamma(S) = 2$, then $S$ is a ringset.
\item If $R = \LQ$ and $\Gamma(S) = 4$, then $S$ is a ringset.
\end{enumerate}
\end{Thm}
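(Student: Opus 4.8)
The strategy is to apply Lemma \ref{lem:S star and linear polys}. By part (1) of that lemma, it suffices to show that every linear polynomial $r(x) = \gamma_1 x + \gamma_0 \in \Int(S,R)$ actually lies in $\Int(S^\ast, R)$. By parts (2a) and (2b), this in turn reduces to a statement about the denominator of the leading coefficient $\gamma_1$: when $R = \LQ$ we need $2\gamma_1 \in R$, and when $R = \HQ$ we need $\gamma_1(1+\bfi) \in R$ (equivalently, by Lemma \ref{lem:difference of conjugates}(2), $\gamma_1 \delta \in R$ for every $\delta \in R$ with $N(\delta)$ even). So the whole theorem comes down to controlling how badly $\gamma_1$ can fail to be integral, given only that $r$ sends $S$ into $R$.

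\textbf{Key steps.} First I would fix a reduced $S \subseteq \mcC_R(m)$ with $m(x) = x^2 - 2ax + n$, and a linear $r(x) = \gamma_1 x + \gamma_0 \in \Int(S,R)$. For any $\alpha, \beta \in S$ we have $r(\alpha) - r(\beta) = \gamma_1(\alpha - \beta) \in R$, so $\gamma_1 \delta \in R$ for every $\delta \in \Delta(S)$. The plan is to exploit this together with the norm identity: for $\delta \in \Delta(S)$, $\gamma_1 \delta \overline{\delta} = \gamma_1 N(\delta) \in R$, and since $N(\delta)$ is a scalar this says $N(\delta)\gamma_1 \in R$. Taking a $\Z$-linear (Bézout) combination of the various $N(\delta)$ shows $\Gamma(S)\gamma_1 \in R$. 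Thus if $\Gamma(S) = 1$ we get $\gamma_1 \in R$ outright, which certainly implies $2\gamma_1 \in R$ and $\gamma_1(1+\bfi)\in R$, settling part (1) for both rings simultaneously. For part (2), $\Gamma(S) = 2$ gives $2\gamma_1 \in R$, which is exactly the hypothesis of Lemma \ref{lem:S star and linear polys}(2a), so $S$ is a ringset when $R = \LQ$; and when $R = \HQ$ with $S \subseteq \HQ \setminus \LQ$ one has (by Corollary \ref{cor:prime factors}(2)) enough control to pick $\delta \in \Delta(S)$ with $N(\delta)$ odd, which combined with $2\gamma_1 \in R$ forces $\gamma_1 \in R$ — here I would need to check the $S \subseteq \LQ$ subcase of $\HQ$ separately, where $\Gamma(S) = 2$ and reducedness (Proposition \ref{prop:reduced equivalent def}(2b)) guarantees some element of $S$ has an even imaginary coefficient, which should let me produce a suitable $\delta$ to upgrade $2\gamma_1 \in R$ to $\gamma_1(1+\bfi) \in R$. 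For part (3), $\Gamma(S) = 4$ and $R = \LQ$ give only $4\gamma_1 \in R$, so I cannot immediately conclude; instead I would write $4\gamma_1 = \mu \in R$ and use that $\gamma_1\delta \in R$ for all $\delta \in \Delta(S)$ with $N(\delta) \in \{4, 8, 12, \dots\}$ (all $\equiv 0 \bmod 4$ by Remark \ref{rem:Gamma}), analyzing $\mu \delta \in 4R$ modulo $4\LQ$ and modulo $2\LQ$ to pin down that $\mu \in 2R$, i.e.\ $2\gamma_1 \in R$, again reducing to Lemma \ref{lem:S star and linear polys}(2a).

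\textbf{Main obstacle.} The genuinely delicate part is step (3): $\Gamma(S) = 4$ only yields $4\gamma_1 \in R$, and I must rule out the possibility that $2\gamma_1 \notin R$. The argument has to use reducedness essentially — by Corollary \ref{cor:prime factors}(1) with $p = 2$, some $\beta \in S$ has an imaginary coefficient not divisible by $2$, hence (after translating so all elements of $S$ have the same real part, which is real and integral) there is $\delta = \alpha - \beta \in \Delta(S)$ whose imaginary part is not all-even; combined with $N(\delta) \equiv 0 \pmod 4$, a congruence analysis modulo $4\LQ$ of the condition $\gamma_1\delta \in \LQ$ should force the "half-integer denominator" part of $\gamma_1$ to vanish. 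The bookkeeping over the finite ring $\LQ/4\LQ$ (or passing to $\LQ/2\LQ$) is where the real work lies; the case $\Gamma(S) = 8$, which requires the extra combinatorial hypothesis, is deliberately deferred to Section \ref{sec:Gamma is 8} precisely because that reduction fails, and I expect the $\Gamma(S)=4$ analysis to show exactly why $8$ is the borderline.
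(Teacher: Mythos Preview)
Your overall strategy---reduce via Lemma \ref{lem:S star and linear polys} to controlling the leading coefficient $\gamma_1$ of a linear $r \in \Int(S,R)$, then use $\gamma_1\delta \in R$ for $\delta \in \Delta(S)$ and a B\'ezout combination of the $N(\delta)$ to get $\Gamma(S)\gamma_1 \in R$---is exactly the paper's approach, and your treatment of part~(1) and of part~(2) for $R=\LQ$ matches the paper.

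However, your plans for part~(2) when $R=\HQ$ and for part~(3) both contain a genuine error, and in fact the same one: you try to locate a $\delta \in \Delta(S)$ with some coordinate or norm \emph{odd}, but the hypothesis on $\Gamma(S)$ forces exactly the opposite. In part~(2) with $R=\HQ$ you propose to ``pick $\delta \in \Delta(S)$ with $N(\delta)$ odd''; this is impossible, since $\Gamma(S)=2$ means $2 \mid N(\delta)$ for every $\delta$. In part~(3) you deduce from Corollary~\ref{cor:prime factors} that ``there is $\delta = \alpha-\beta \in \Delta(S)$ whose imaginary part is not all-even''; but $\Gamma(S)=4$ gives $d_1^2+d_2^2+d_3^2 \equiv 0 \pmod 4$ for every $\delta = d_1\bfi+d_2\bfj+d_3\bfk \in \Delta(S)$, which forces \emph{all} $d_i$ even. (That some $\beta \in S$ has an odd imaginary coefficient says nothing about parities in $\Delta(S)$: take $S=\{\bfi,-\bfi\}$.)

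The paper's fix turns this obstruction into the key step. Precisely because every $\delta$ is ``divisible'' (by $1+\bfi$ in~$\HQ$, by $2$ in~$\LQ$ when $\Gamma(S)=4$), one can peel that factor off $\overline{\delta}$ inside the B\'ezout expression $\Gamma(S)\gamma_1 = \sum_i n_i\,\gamma_1(\alpha_i-\beta_i)\,\overline{(\alpha_i-\beta_i)}$. In part~(3), writing $\overline{\alpha_i-\beta_i} = 2\cdot\big(\overline{\alpha_i-\beta_i}/2\big)$ with the bracketed factor in $\LQ$ immediately halves the identity to $2\gamma_1 \in \LQ$. In part~(2) for $\HQ$, Lemma~\ref{lem:difference of conjugates}(2) gives $\alpha_i-\beta_i = \varepsilon_i(1+\bfi)$, and since the real part is zero also $\overline{\alpha_i-\beta_i} = -\varepsilon_i(1+\bfi)$; substituting and right-multiplying by $(1-\bfi)^{-1}=(1+\bfi)/2$ converts $2\gamma_1 \in R$ into $\gamma_1(1+\bfi)\in R$. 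So the missing idea is not a congruence analysis in $\LQ/4\LQ$ but rather the observation that the very divisibility you were trying to avoid lets you cancel a factor of $2$ (or $1+\bfi$) from the B\'ezout identity.
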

\begin{proof}
For each part, we use the fact that $\Gamma(S)$ can be written as a linear combination of the norms of differences of elements of $S$. So, throughout we will assume that there exists an integer $t \geq 1$ and, for each $1 \leq i \leq t$, elements $\alpha_i, \beta_i \in S$ and integers $n_i$ such that
\begin{equation}\label{Gamma linear combo}
\Gamma(S) = \sum_{i=1}^t n_iN(\alpha_i-\beta_i).
\end{equation}

Next, let $r(x) = \gamma_1 x + \gamma_0 \in \Int(S,R)$. Then, for every $\alpha, \beta \in S$, $\gamma_1(\alpha-\beta) = r(\alpha)-r(\beta) \in R$. Thus, $N(\alpha - \beta)\gamma_1 = \gamma_1(\alpha - \beta)\big(\,\overline{\alpha-\beta}\,\big) \in R$. Applying \eqref{Gamma linear combo}, we obtain
\begin{equation}\label{gamma linear combo}
\Gamma(S)\gamma_1 = \sum_{i=1}^t n_i\gamma_1(\alpha_i - \beta_i)\big(\,\overline{\alpha_i-\beta_i}\,\big) \in R.
\end{equation}
Note that \eqref{gamma linear combo} holds for the leading coefficient $\gamma_1$ of any linear polynomial in $\Int(S,R)$.

(1) Assume that $\Gamma(S)=1$. By \eqref{gamma linear combo}, $\gamma_1 \in R$. Since $r \in \Int(S,R)$, this forces $\gamma_0 \in R$, and hence $r \in R[x]$. Consequently, $r \in \Int(S^\ast,R)$ and $S$ is a ringset by Lemma \ref{lem:S star and linear polys}.

(2) Assume that $\Gamma(S) = 2$. Then, $2\gamma_1 \in R$ by \eqref{gamma linear combo}. If $R = \LQ$, then $S$ is a ringset by Lemma \ref{lem:S star and linear polys}. So, assume that $R = \HQ$. We will use \eqref{gamma linear combo} to show that $\gamma_1(1+\bfi) \in R$. 

For each $1 \leq i \leq t$, the norm of $\alpha_i-\beta_i$ is even, so $\alpha_i - \beta_i \in R(1+\bfi)$ by Lemma \ref{lem:difference of conjugates}. For each $i$, let $\varepsilon_i \in R$ be such that $\alpha_i-\beta_i = \varepsilon_i(1+\bfi)$. Also, note that since the real part of $\alpha_i-\beta_i$ is 0, $\overline{\alpha_i-\beta_i} = -(\alpha_i-\beta_i) = -\varepsilon_i(1+\bfi)$. From \eqref{gamma linear combo}, we obtain
\begin{equation}\label{eq:1+i}
\gamma_1(1+\bfi)(1-\bfi) = 2\gamma_1 = \sum_{i=1}^t n_i\gamma_1(\alpha_i - \beta_i)(-\varepsilon_i)(1+\bfi).
\end{equation}
The inverse of $(1-\bfi)$ is $(1-\bfi)^{-1} = (1+\bfi)/2$. Multiplying each side of \eqref{eq:1+i} on the right by this element produces
\begin{align*}
\gamma_1(1+\bfi) &= \sum_{i=1}^t n_i\gamma_1(\alpha_i - \beta_i)(-\varepsilon_i)(1+\bfi)(1+\bfi)/2\\
&= \sum_{i=1}^t n_i\gamma_1(\alpha_i - \beta_i)(-\varepsilon_i)(\bfi).
\end{align*}
This last expression is in $R$ because $\gamma_1(\alpha_i-\beta_i) \in R$ for each $i$. We conclude that $\gamma_1(1+\bfi) \in R$. Hence, $S$ is a ringset by Lemma \ref{lem:S star and linear polys}.

(3) Assume that $R = \LQ$ and $\Gamma(S) = 4$. Then, $N(\delta) \equiv 0 \pmod{4}$ for all $\delta \in \Delta(S)$. Given $\delta = d_1\bfi + d_2\bfj + d_3\bfk$, where each $d_i \in \Z$, this means that $d_1^2 + d_2^2 + d_3^2 \equiv 0 \pmod{4}$. Thus, each $d_i$ is even. Consequently, $\alpha - \beta \in 2\LQ$ and $\overline{\alpha-\beta} \in 2\LQ$ for all $\alpha, \beta \in S$. Applying \eqref{gamma linear combo}, we have
\begin{equation}\label{eq:Gamma=4}
2\gamma_1 = \tfrac{1}{2}(4\gamma_1) = \sum_{i=1}^t n_i\gamma_1(\alpha_i - \beta_i)\Big(\dfrac{\;\overline{\alpha_i-\beta_i}\;}{2}\Big).
\end{equation}
For each $i$, both $\gamma_1(\alpha_i - \beta_i)$ and $\big(\,\overline{\alpha_i-\beta_i}\,\big)/2$ are in $R$. So, $2\gamma_1 \in R$, and hence $S$ is a ringset by Lemma \ref{lem:S star and linear polys}.
\end{proof}

As Theorem \ref{thm:Gamma is 1, 2, or 4} shows, $S$ will be a ringset when $\Gamma(S)$ is a small power of 2. We next demonstrate that $S$ will not be a ringset if $\Gamma(S)$ is too large. To prove this, we will work with elements of the form $u\alpha - \alpha u$, where $\alpha \in R$ and $u \in R^\times$. As an aid to the reader, we record the following computations. Let $\alpha = a_0 + a_1\bfi + a_2\bfj + a_3\bfk$. Then,
\begin{align*}
\bfi \alpha - \alpha\bfi &= -2a_3\bfj + 2a_2\bfk, \quad \bfj \alpha - \alpha\bfj = 2a_3\bfi - 2a_1\bfk, \quad \bfk\alpha - \alpha\bfk = -2a_2\bfi + 2a_1\bfj, \quad \text{and}\\
\bfh\alpha - \alpha\bfh &= (a_3-a_2)\bfi + (a_1-a_3)\bfj + (a_2-a_1)\bfk.
\end{align*}

\begin{Prop}\label{prop:powers of 2 divide Gamma}
Let $S \subsetneq R$ be reduced.
\begin{enumerate}[(1)]
\item Assume $R = \LQ$. If $16$ divides $\Gamma(S)$, then $S$ is not a ringset.
\item Assume $R = \HQ$. If $4$ divides $\Gamma(S)$, then $S$ is not a ringset.
\end{enumerate}
\end{Prop}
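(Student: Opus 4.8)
Throughout, the plan assumes $|S| \geq 2$; the singleton case is immediate, since then $\Gamma(S) = 0$ so the divisibility hypotheses hold trivially, while the element of $S$ is not in $\Z$ (Definition~\ref{def:reduced set}(i) forces a quadratic minimal polynomial), so $S$ is not a ringset by Lemma~\ref{lem:basic ringsets}(1). For $|S| \geq 2$ the plan is, in each case, to exhibit a degree-one polynomial $r(x) = \gamma_1 x + \gamma_0 \in \Int(S,R)$ that vanishes at some $\alpha_0 \in S$, together with a unit $c \in R^\times \subseteq \Int(S,R)$, so that the product $rc$ is not in $\Int(S,R)$. The identity I will use is $(rc)(\alpha) = r(\alpha)c + \gamma_1(c\alpha - \alpha c)$ for $\alpha \in R$ (immediate because $x$ is central in $\D[x]$); evaluating at $\alpha_0$ gives $(rc)(\alpha_0) = \gamma_1(c\alpha_0 - \alpha_0 c)$, so it will suffice to choose $r$ and $c$ so that this single value lies outside $R$.

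The first step is to translate the divisibility hypothesis into a statement about $\Delta(S)$. Since every element of the minimal polynomial class containing $S$ has the same real part, each $\delta \in \Delta(S)$ has real part $0$, and $\delta$ has integer imaginary coefficients (clear when $S \subseteq \LQ$; when $S \subseteq \HQ\setminus\LQ$ the half-integer coefficients of two elements of $S$ differ by even integers). Writing $\delta = d_1\bfi + d_2\bfj + d_3\bfk$, a sum of three integer squares that is $\equiv 0 \pmod 4$ must have all three summands even, so $4 \mid N(\delta)$ gives $\delta \in 2\LQ$; applying this once more, $16 \mid N(\delta)$ gives $\delta \in 4\LQ$. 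Hence $16 \mid \Gamma(S)$ forces $\Delta(S) \subseteq 4\LQ$ (for $R = \LQ$) and $4 \mid \Gamma(S)$ forces $\Delta(S) \subseteq 2\LQ$ (for $R = \HQ$). Fixing $\alpha_0 \in S$, I would then record that $\tfrac{1}{4}(x - \alpha_0) \in \Int(S,\LQ)$ in the first case, and that both $\tfrac{1}{2}(x - \alpha_0)$ and $\tfrac{\bfh}{2}(x - \alpha_0)$ lie in $\Int(S,\HQ)$ in the second (using $(\alpha - \alpha_0)/2 \in \LQ \subseteq \HQ$ and $\bfh\LQ \subseteq \HQ$); each of these polynomials vanishes at $\alpha_0$.

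For part (1) I would take $r(x) = \tfrac{1}{4}(x - \alpha_0)$. Because $S$ is reduced, Corollary~\ref{cor:prime factors}(1) with $p = 2$ yields an element of $S$ with an odd imaginary coefficient, and since $\Delta(S) \subseteq 4\LQ$ the imaginary coefficients of all elements of $S$ have the same parities, so $\alpha_0 = a_0 + a_1\bfi + a_2\bfj + a_3\bfk$ itself has one; after an automorphism of $\LQ$ permuting $\bfi, \bfj, \bfk$ (which sends ringsets to ringsets) we may assume $a_1$ is odd. Taking $c = \bfj$ and using $\bfj\alpha_0 - \alpha_0\bfj = 2a_3\bfi - 2a_1\bfk$ gives $(rc)(\alpha_0) = \tfrac{a_3}{2}\bfi - \tfrac{a_1}{2}\bfk \notin \LQ$. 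For part (2), I would split via Remark~\ref{Disjoint classes}. If $S \subseteq \HQ\setminus\LQ$, take $r(x) = \tfrac{\bfh}{2}(x - \alpha_0)$ and $c = \bfi$: with $\bfi\alpha_0 - \alpha_0\bfi = -2a_3\bfj + 2a_2\bfk$ and $a_2, a_3 \in \Z + \tfrac{1}{2}$, this gives $(rc)(\alpha_0) = \bfh(-a_3\bfj + a_2\bfk)$, and $-a_3\bfj + a_2\bfk$ has integer real and $\bfi$-coordinates but non-integer $\bfj$- and $\bfk$-coordinates, so lies outside $\HQ$; as $\bfh \in \HQ^\times$, so does $(rc)(\alpha_0)$. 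If $S \subseteq \LQ$, then Proposition~\ref{prop:reduced equivalent def}(2b) says $S$ is reduced with respect to $\LQ$ and has an element with an even imaginary coefficient; together with Corollary~\ref{cor:prime factors}(1) and the matching of parities forced by $\Delta(S) \subseteq 2\LQ$, I may assume (permuting $\bfi, \bfj, \bfk$) that $\alpha_0 = a_0 + a_1\bfi + a_2\bfj + a_3\bfk$ has $a_1$ odd and $a_2$ even. Then I would take $r(x) = \tfrac{1}{2}(x - \alpha_0)$ and $c = \bfh$: using $\bfh\alpha_0 - \alpha_0\bfh = (a_3 - a_2)\bfi + (a_1 - a_3)\bfj + (a_2 - a_1)\bfk$ gives $(rc)(\alpha_0) = \tfrac{1}{2}\big((a_3 - a_2)\bfi + (a_1 - a_3)\bfj + (a_2 - a_1)\bfk\big)$, whose $\bfk$-coordinate $(a_2 - a_1)/2$ is not an integer while its real part is $0$, so $(rc)(\alpha_0) \notin \HQ$. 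In each case $\Int(S,R)$ is not closed under multiplication, so $S$ is not a ringset.

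The hard part is twofold. First, one has to recognize that the divisibility of $\Gamma(S)$ is precisely a containment of $\Delta(S)$ in $4\LQ$ (resp.\ $2\LQ$), which is exactly what makes a suitably divided linear polynomial integer-valued. Second, such a polynomial has a \emph{central} leading coefficient, so multiplying by a central constant achieves nothing; the correct device is to multiply by a non-central unit $c$ chosen so that the commutator $c\alpha_0 - \alpha_0 c$ is divisible by exactly $2$ rather than $4$—this is where reducedness is essential. The most delicate situation is $R = \HQ$ with $S \subseteq \LQ$, which genuinely requires both clauses of the reducedness criterion in Proposition~\ref{prop:reduced equivalent def}(2b) and the use of the Hurwitz unit $\bfh$ as $c$, so that $\bfh\alpha_0 - \alpha_0\bfh$ combines an odd and an even imaginary coefficient of $\alpha_0$ and yields a quaternion with one integer and one strictly-half-integer coordinate.
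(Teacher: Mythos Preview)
Your proof is correct and follows essentially the same approach as the paper's: translate the divisibility hypothesis into $\Delta(S)\subseteq 4\LQ$ (resp.\ $2\LQ$), build a linear polynomial in $\Int(S,R)$ vanishing at a chosen point, and multiply on the right by a unit so that the commutator term lands outside $R$. The only cosmetic differences are your explicit commutator identity, the appeal to automorphisms for the WLOG, and your use of $\tfrac{\bfh}{2}(x-\alpha_0)$ in the $\HQ\setminus\LQ$ case where the paper simply uses $\tfrac{1}{2}(x-\beta)$ (the extra $\bfh$ is harmless but unnecessary).
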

\begin{proof}
By Lemma \ref{lem:basic ringsets}, both parts of the proposition are true when $S$ is a singleton set. So, we will assume throughout that $|S| \geq 2$, and hence that $\Gamma(S) \ne 0$.

(1) Assume that $16 \mid \Gamma(S)$. Then, each $\delta \in \Delta(S)$ has the form $\delta = d_1\bfi + d_2\bfj + d_3\bfk$, where $d_1, d_2, d_3 \in \Z$ and $N(\delta) = d_1^2+d_2^2+d_3^2 \equiv 0 \pmod{16}$. The only way this congruence can be satisfied is if each $d_i$ is divisible by 4. This holds for all $\delta \in \Delta(S)$, which means that the polynomial $(x-\alpha)/4$ is in $\Int(S,R)$ for all $\alpha \in S$. Since $S$ is reduced, by Corollary \ref{cor:prime factors} there exists $\beta = b_0 + b_1\bfi + b_2\bfj + b_3\bfk \in S$ such that at least one of $b_1, b_2$, or $b_3$ is odd. Without loss of generality, assume that $b_1$ is odd. Let $f(x) = (x-\beta)/4$. Then, $(f\bfj)(x) = (\bfj x - \beta \bfj)/4$, and 
\begin{equation*}
(f\bfj)(\beta) = (\bfj \beta - \beta \bfj)/4 = (2b_3\bfi - 2b_1\bfk)/4 \notin R.
\end{equation*}
This shows that $f \bfj \notin \Int(S,R)$, and therefore $S$ is not a ringset. When $b_2$ or $b_3$ is odd, we can reach the same conclusion by examining $(f\bfi)(\beta)$.\\

(2) Assume that $4 \mid \Gamma(S)$. Recall from Remark \ref{Disjoint classes} that because $S$ is reduced, either $S \subsetneq \LQ$ or $S \subsetneq \HQ\setminus\LQ$. This leads us to consider two cases.\\

\noindent\textbf{\underline{Case 1}}: $S \subsetneq \LQ$

As in the proof of Theorem \ref{thm:Gamma is 1, 2, or 4}(3), each $\delta \in \Delta(S)$ has the form $\delta = d_1\bfi+d_2\bfj+d_3\bfk$ with each $d_i$ even. This means that all coefficients of elements of $S$ have matching parities. That is, given $\alpha = a+a_1\bfi+a_2\bfj+a_3\bfk \in S$ and $\beta = a+b_1\bfi+b_2\bfj+b_3\bfk \in S$, we have $a_i \equiv b_i \pmod{2}$ for all $1 \leq i \leq 3$. 

Suppose that each imaginary coefficient $a_i$ is odd for every $\alpha \in S$. Then, either $S \subseteq 2\HQ$ (if $a$ is odd) or $S+1 \subseteq 2\HQ$ (if $a$ is even). This is impossible, because $S$ is reduced. So, there exists $\beta = a+b_1\bfi+b_2\bfj+b_3\bfk \in S$ such that $b_i$ is even for some $1 \leq i \leq 3$. Furthermore, some $b_j$ must be odd. If not, then---since coefficients of elements of $S$ have matching parities---this would violate Corollary \ref{cor:prime factors}.

Now, since $\Delta(S) \subseteq 2\LQ$, the polynomial $f(x) = (x-\beta)/2$ is in $\Int(S,R)$. Since $R = \HQ$, $\bfh \in R$ and we have
\begin{equation*}
(f\bfh)(\beta) = (\bfh\beta - \beta\bfh)/2 = \big((b_3-b_2)\bfi + (b_1-b_3)\bfj + (b_2-b_1)\bfk\big)/2.
\end{equation*}
As noted above, at least one $b_i$ is even, and at least one $b_j$ is odd. Thus, at least one of $b_3-b_2$, $b_1-b_3$, or $b_2-b_1$ is odd. Therefore, $(f\bfh)(\beta) \notin R$ and $S$ is not a ringset.\\

\noindent\textbf{\underline{Case 2}}: $S \subsetneq \HQ\setminus\LQ$

This is similar to part (1). This time, each $\delta \in \Delta(S)$ has the form $\delta = \tfrac{d_1}{2}\bfi + \tfrac{d_2}{2}\bfj + \tfrac{d_3}{2}\bfk$ with $d_1, d_2, d_3 \in \Z$. The norm of $\delta$ is $N(\delta) = \tfrac{1}{4}(d_1^2+d_2^2+d_3^2)$. Assuming that $4 \mid \Gamma(S)$, we must have $d_1^2+d_2^2+d_3^2 \equiv 0 \pmod{16}$, and hence 4 divides each $d_i$. Consequently, the polynomial $(x-\alpha)/2 \in \Int(S,R)$ for each $\alpha \in S$.

Fix $\beta \in S$. Since $S \subseteq \HQ\setminus\LQ$, we have $\beta = \tfrac{1}{2}(a+b_1\bfi+b_2\bfj+b_3\bfk)$, where $a, b_1, b_2$, and $b_3$ are odd integers. Let $f(x) = (x-\beta)/2 \in \Int(S,R)$. Then, 
\begin{equation*}
(f\bfi)(\beta) = (\bfi \beta - \beta \bfi)/2 = (-2(\tfrac{b_3}{2})\bfj + 2(\tfrac{b_2}{2})\bfk)/2 = (-b_3\bfj + b_2\bfk)/2 \notin R.
\end{equation*}
Thus, $f\bfi \notin \Int(S,R)$, and $S$ is not a ringset.
\end{proof}

To this point, we have focused on powers of $2$ that could divide $\Gamma(S)$. This is because whenever an odd prime $p$ divides $\Gamma(S)$, $S$ will not be a ringset (see Proposition \ref{prop:p divides Gamma} below). It also remains to consider the possibility that $R=\LQ$ and $8 \mid \Gamma(S)$. This case requires a more detailed analysis, and is done in Section \ref{sec:Gamma is 8}.

\begin{Lem}\label{lem:lin poly lemma}
Let $S \subsetneq R$ be reduced. Assume that $\Int(S,R)$ contains a polynomial of the form $f(x)=\gamma(x-\beta)/p$, where $\gamma$, $\beta$, and $p$ satisfy the following conditions:
\begin{enumerate}[(i)]
\item $p$ is an odd prime.
\item $\gamma \in R$ but $\gamma/p \notin R$.
\item $\beta \in S$ satisfies the conclusion of Corollary \ref{cor:prime factors} with respect to $p$.
\end{enumerate}
Then, $S$ is not a ringset.
\end{Lem}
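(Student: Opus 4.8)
The plan is to show that $\Int(S,R)$ is not closed under multiplication by pairing $f$ with a suitable constant unit polynomial. Since $R[x]\subseteq\Int(S,R)$, every $u\in R^\times$, regarded as a constant polynomial, lies in $\Int(S,R)$. Because $x$ is central in $\D[x]$, we have $(fu)(x)=\gamma(x-\beta)u/p=(\gamma u\,x-\gamma\beta u)/p$, and hence $(fu)(\beta)=\gamma(u\beta-\beta u)/p$. As $\beta\in S$, it therefore suffices to find a single unit $u$ for which $\gamma(u\beta-\beta u)/p\notin R$; since $p$ is odd, this is the same as $\gamma(u\beta-\beta u)\notin pR$.

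I would search for $u$ among $\bfi$, $\bfj$, $\bfk$, which are units of both $\LQ$ and $\HQ$, using the commutator identities recorded just before Proposition~\ref{prop:powers of 2 divide Gamma}. By Remark~\ref{Disjoint classes} there are two cases: either $S\subsetneq\LQ$ and $\beta=b_0+b_1\bfi+b_2\bfj+b_3\bfk$ with each $b_i\in\Z$, or $S\subsetneq\HQ\setminus\LQ$ and $\beta=(b_0+b_1\bfi+b_2\bfj+b_3\bfk)/2$ with each $b_i$ odd. In either case, for $u\in\{\bfi,\bfj,\bfk\}$ the element $\delta_u:=u\beta-\beta u$ has zero real part, lies in $\LQ\subseteq R$, and its norm is a sum of two of $b_1^2$, $b_2^2$, $b_3^2$, up to the factor $4$, which is a unit mod $p$: concretely $N(\delta_\bfi)$ is $b_2^2+b_3^2$ or $4(b_2^2+b_3^2)$, and similarly $N(\delta_\bfj)$ involves $b_1^2+b_3^2$ and $N(\delta_\bfk)$ involves $b_1^2+b_2^2$. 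By hypothesis~(iii) and Corollary~\ref{cor:prime factors}, not all of $b_1$, $b_2$, $b_3$ are divisible by $p$, and from this I claim that at least one of $b_1^2+b_2^2$, $b_1^2+b_3^2$, $b_2^2+b_3^2$ is nonzero modulo $p$: if all three vanished mod $p$ then $b_i^2\equiv-b_j^2\pmod p$ for every pair $i\ne j$, so $2b_i^2\equiv0\pmod p$ and, $p$ being odd, $p\mid b_i$ for all $i$, a contradiction. Choose $u$ so that $p\nmid N(\delta_u)$.

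With this $u$ fixed, suppose for contradiction that $\gamma\delta_u\in pR$. Multiplying on the right by $\overline{\delta_u}\in R$ and using that $pR$ is a two-sided ideal of $R$ gives $N(\delta_u)\,\gamma\in pR$. Writing $\gamma$ in a $\Z$-basis of $R$ and using $\gcd(N(\delta_u),p)=1$ then forces every coordinate of $\gamma$ to be divisible by $p$, i.e.\ $\gamma/p\in R$, contradicting hypothesis~(ii). Hence $(fu)(\beta)=\gamma\delta_u/p\notin R$, so $fu\notin\Int(S,R)$; since $f$ and $u$ both lie in $\Int(S,R)$, this shows $\Int(S,R)$ is not a ring and $S$ is not a ringset.

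I do not expect a genuine obstacle: the argument is a direct computation once the right multiplier $u$ is identified, and it runs uniformly for $\LQ$ and $\HQ$ since only the units $\bfi$, $\bfj$, $\bfk$ are used. The one spot that needs a little care is the elementary congruence claim --- that among the three pairwise sums $b_i^2+b_j^2$ at least one avoids $p$ whenever the $b_i$ are not all multiples of $p$ --- since this is precisely what dictates which of $\bfi$, $\bfj$, $\bfk$ to multiply by; but as sketched above it is a one-line check.
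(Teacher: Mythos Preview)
Your proof is correct and uses the same core idea as the paper: evaluate $(fu)(\beta)=\gamma(u\beta-\beta u)/p$ for $u\in\{\bfi,\bfj,\bfk\}$, then multiply by the conjugate of the commutator to bring in the norms $b_i^2+b_j^2$. The only organizational difference is that the paper assumes $S$ is a ringset, records all three conditions $(\gamma/p)(4b_i^2+4b_j^2)\in R$, and takes linear combinations to force $p\mid b_i$ for every $i$, whereas you first isolate a single $u$ with $p\nmid N(\delta_u)$ via the pairwise-sum observation and obtain the contradiction directly from that one choice---a slightly more constructive packaging of the same computation.
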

\begin{proof}
Suppose by way of contradiction that $S$ is a ringset. Since $S$ is reduced,  either $S \subseteq \LQ$ or $S \subseteq \HQ\setminus\LQ$ by Remark \ref{Disjoint classes}. Let $\beta = b_0 + b_1\bfi + b_2\bfj + b_3\bfk$, where each $b_i \in \Z$ if $S \subseteq \LQ$, and each $b_i \in \Z + \tfrac{1}{2}$ if $S \subseteq \HQ\setminus\LQ$. By assumption, there is some $1\leq i \leq 3$ such that either $p \nmid b_i$ (if $\beta \in \LQ$) or $p \nmid 2b_i$ (if $\beta \in \HQ\setminus\LQ$).

Since $S$ is a ringset, $\Int(S,R)$ contains $f\bfi$, $f\bfj$, and $f\bfk$. Consider $(f\bfi)(\beta)$. In $\D[x]$, we have $(f\bfi)(x) = (\gamma\bfi x - \gamma \beta \bfi)/p$, and so
\begin{equation*}
(f\bfi)(\beta) = (\gamma\bfi \beta - \gamma \beta \bfi)/p = (\gamma/p)(\bfi\beta-\beta\bfi) = (\gamma/p)(-2b_3\bfj + 2b_2\bfk) \in R.
\end{equation*}
Note that $-2b_3\bfj + 2b_2\bfk \in R$. Multiplying $(f\bfi)(\beta)$ on the right by $\overline{-2b_3\bfj+2b_2\bfk}$ produces $(\gamma/p)(4b_2^2+4b_3^2) \in R$. Performing similar steps with $(f\bfj)(\beta)$ and $(f\bfk)(\beta)$ shows that $(\gamma/p)(4b_1^2+4b_3^2) \in R$ and $(\gamma/p)(4b_1^2+4b_2^2) \in R$. Then, $R$ also contains
\begin{equation*}
(\gamma/p)(8b_1^2) = (\gamma/p)(4b_1^2+4b_2^2) + (\gamma/p)(4b_1^2+4b_3^2) - (\gamma/p)(4b_2^2+4b_3^2).
\end{equation*}
Likewise, $(\gamma/p)(8b_2^2), (\gamma/p)(8b_3^2) \in R$.

By assumption, $\gamma/p \notin R$, so $p$ must divide $8b_i^2$ for each $1 \leq i \leq 3$. Since $p$ is odd, this means that for all $1 \leq i \leq 3$, $p$ divides $b_i$ (if $\beta \in \LQ$), or $2b_i$ (if $\beta \in \HQ\setminus\LQ$). This contradicts Corollary \ref{cor:prime factors}. Therefore, $S$ cannot be a ringset.
\end{proof}

From here, we will prove that whenever an odd prime $p$ divides $\Gamma(S)$, $\Int(S,R)$ contains a polynomial of the form $\gamma(x-\beta)/p$ as in Lemma \ref{lem:lin poly lemma}. To do this, we will use the well-known fact (see e.g.\ \cite[Chap.\ 11]{Voight}) that $R/pR \cong M_2(\F_p)$, the ring of $2 \times 2$ matrices over the finite field $\F_p$.

\begin{Lem}\label{lem:nilpotent matrices}
Let $p$ be an odd prime. Let $A,B \in M_2(\F_p)$ be nonzero and such that $A$, $B$, and $B-A$ are all nilpotent. Then, $A$ and $B$ are nonzero scalar multiples of one another.
\end{Lem}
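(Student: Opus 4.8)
The plan is to exploit the rigid structure of nonzero nilpotent $2\times 2$ matrices. Over any field, a nilpotent $2\times 2$ matrix $M$ has characteristic polynomial $x^2$, so $M^2=0$ by Cayley--Hamilton; if in addition $M\neq 0$, then $M$ has rank exactly $1$ and its image equals its kernel, which is a one-dimensional subspace (a ``line'') of $\F_p^2$. Applying this to $A$ and $B$, I would set $L_A:=\operatorname{im}(A)=\ker(A)$ and $L_B:=\operatorname{im}(B)=\ker(B)$. The crux of the proof is the claim that $L_A=L_B$.

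To prove the claim I would argue by contradiction. Suppose $L_A\neq L_B$; then these are distinct lines in $\F_p^2$, so choosing a nonzero $v\in L_A$ and a nonzero $w\in L_B$ yields a basis $\{v,w\}$. Since $\operatorname{im}(A)=\ker(A)=L_A=\langle v\rangle$, we have $Av=0$ and $Aw=\alpha v$ for some $\alpha\in\F_p$, with $\alpha\neq 0$ (otherwise $A=0$ on a basis, forcing $A=0$); symmetrically $Bw=0$ and $Bv=\beta w$ with $\beta\neq 0$. In the basis $\{v,w\}$ the matrix of $B-A$ is then $\smat{0}{-\alpha}{\beta}{0}$, whose determinant is $\alpha\beta\neq 0$. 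But every nilpotent matrix has determinant $0$, contradicting the hypothesis that $B-A$ is nilpotent. Hence $L_A=L_B$.

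With the claim in hand, write $L:=L_A=L_B$, fix a nonzero $v\in L$, and extend to a basis $\{v,w\}$ of $\F_p^2$. Then $Av=Bv=0$, while $Aw=av$ and $Bw=bv$ for some $a,b\in\F_p$; both $a$ and $b$ are nonzero because $A$ and $B$ are nonzero. Since $A$ and $(ab^{-1})B$ agree on the basis $\{v,w\}$, we conclude $A=(ab^{-1})B$ with $ab^{-1}\neq 0$, which is exactly the assertion. The only step that takes any thought is the dichotomy in the second paragraph---observing that distinct kernel-lines force $B-A$ to interchange the two basis vectors up to nonzero scalars, hence to be invertible; the rest is routine linear algebra. (Note that the hypothesis ``$p$ odd'' is not actually used in this lemma; it is retained only because the lemma is applied via $R/pR\cong M_2(\F_p)$, which requires $p$ odd.)
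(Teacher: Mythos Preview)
Your proof is correct, and it takes a genuinely different route from the paper's. The paper argues by explicit coordinates: since a nonzero nilpotent in $M_2(\F_p)$ is similar to $\smat{0}{1}{0}{0}$, one may assume $A=\smat{0}{1}{0}{0}$, write $B=\smat{a}{b}{c}{-a}$ with $-a^2-bc=0$, and compute $\det(B-A)=-a^2-bc+c=c$; nilpotency of $B-A$ then forces $c=0$, hence $a=0$, so $B=bA$. Your argument is coordinate-free: you attach to each nonzero $2\times 2$ nilpotent its unique line $\operatorname{im}=\ker$, show that if these lines for $A$ and $B$ were distinct then $B-A$ would be invertible in the resulting basis, and conclude proportionality once the lines coincide. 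The paper's version is shorter to write down; yours explains structurally why the result holds and makes it transparent (as you note) that the hypothesis ``$p$ odd'' is irrelevant to the lemma itself and is only needed for the isomorphism $R/pR\cong M_2(\F_p)$ in the subsequent application.
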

\begin{proof}
Since $A$ and $B$ are nonzero and nilpotent, they are each similar to $\smat{0}{1}{0}{0}$. So, we may assume without loss of generality that $A = \smat{0}{1}{0}{0}$. Since $B$ is nilpotent, both its trace and determinant are 0. So, $B = \smat{a}{b}{c}{-a}$ for some $a, b, c \in \F_p$ such that $-a^2-bc=0$. We have $0=\det(B-A) = -a^2-bc+c$, so $c=0$. This forces $a=0$. Hence, $B = bA$, and $b \ne 0$ because $B$ is not the zero matrix.
\end{proof}

\begin{Prop}\label{prop:p divides Gamma}
Let $S \subsetneq R$ be reduced. If there is an odd prime $p$ such that $p$ divides $\Gamma(S)$, then $S$ is not a ringset.
\end{Prop}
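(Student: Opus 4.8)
The plan is to manufacture inside $\Int(S,R)$ a linear polynomial of exactly the shape required by Lemma \ref{lem:lin poly lemma} and then invoke that lemma. Concretely, I want to find $\gamma \in R$ with $\gamma \notin pR$ (equivalently $\gamma/p \notin R$) such that $\gamma\,\Delta(S) \subseteq pR$. Granting such a $\gamma$, let $\beta \in S$ be an element furnished by Corollary \ref{cor:prime factors} with respect to $p$. Then $f(x) = \gamma(x-\beta)/p$ satisfies $f(\alpha) = \gamma(\alpha-\beta)/p \in R$ for every $\alpha \in S$ (because $\alpha - \beta \in \Delta(S)$), so $f \in \Int(S,R)$; and $f$, $\gamma$, $\beta$, $p$ meet conditions (i)--(iii) of Lemma \ref{lem:lin poly lemma}, so $S$ is not a ringset. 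The singleton case $|S|=1$ needs no work: a reduced singleton lies in $R\setminus\Z$, hence is not a ringset by Lemma \ref{lem:basic ringsets}(1). So from now on assume $|S| \geq 2$.

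To build $\gamma$, I would pass to the residue ring $R/pR \cong M_2(\F_p)$ (legitimate since $p$ is odd), writing $\pi\colon R \to M_2(\F_p)$ for the quotient map, and fix a base point $\beta_0 \in S$. For any $\alpha \in S$, the difference $\alpha-\beta_0$ is a pure quaternion, since all elements of $S$ share the same real part (condition (i) of being reduced), so $\pi(\alpha-\beta_0)$ has trace $0$; and $p \mid \Gamma(S) \mid N(\alpha-\beta_0)$, so $\pi(\alpha-\beta_0)$ has determinant $0$. Hence $\pi(\alpha-\beta_0)$ is nilpotent for every $\alpha \in S$. Applying the same reasoning to $\alpha-\alpha' \in \Delta(S)$ shows that $\pi(\alpha-\beta_0)-\pi(\alpha'-\beta_0) = \pi(\alpha-\alpha')$ is nilpotent for all $\alpha,\alpha' \in S$. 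Now Lemma \ref{lem:nilpotent matrices}, applied to any two nonzero matrices among $\{\pi(\alpha-\beta_0) : \alpha \in S\}$, forces them to be scalar multiples of one another; consequently there is a nonzero nilpotent $N \in M_2(\F_p)$ with $\pi(\alpha-\beta_0) \in \F_p N$ for all $\alpha \in S$. (In the degenerate case where every $\pi(\alpha-\beta_0)$ vanishes, simply take $N$ to be any nonzero nilpotent matrix, e.g.\ a lift of $\smat{0}{1}{0}{0}$; the argument below still goes through.)

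Since $N$ is a nonzero nilpotent $2\times 2$ matrix it has rank $1$, so the space of row vectors $v$ with $vN = 0$ is one-dimensional and nonzero. Choosing a matrix whose first row is a nonzero such $v$ and whose second row is zero, and lifting it through $\pi$, produces $\gamma \in R$ with $\pi(\gamma) \neq 0$ and $\pi(\gamma)\,N = 0$. Then $\gamma \notin pR$, and $\pi(\gamma)\,\pi(\alpha-\beta_0) = 0$ for every $\alpha \in S$, i.e.\ $\gamma(\alpha-\beta_0) \in pR$; subtracting two such relations gives $\gamma(\alpha-\alpha') \in pR$ for all $\alpha,\alpha' \in S$, which is precisely $\gamma\,\Delta(S) \subseteq pR$. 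This is the $\gamma$ sought in the first paragraph, and the proof concludes as described there.

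I expect the only real content to be the residue-ring step: recognizing that "common real part'' annihilates the trace and "$p \mid \Gamma(S)$'' annihilates the determinant, so that every relevant image is nilpotent and Lemma \ref{lem:nilpotent matrices} pins all of them onto a single line $\F_p N$; and then the small piece of linear algebra that the left annihilator of a rank-one $N$ is nonzero, so it lifts to a $\gamma$ with $\gamma \notin pR$ but $\gamma/p \notin R$. The remaining work is just checking the hypotheses of Lemma \ref{lem:lin poly lemma}. The one bookkeeping subtlety is the degenerate case $\pi(\alpha-\beta_0)\equiv 0$ for all $\alpha$, which I fold into the same argument by allowing $N$ to be an arbitrary nonzero nilpotent there.
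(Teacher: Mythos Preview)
Your proposal is correct and follows essentially the same route as the paper: reduce modulo $p$ to $M_2(\F_p)$, observe that all elements of $\Delta(S)$ become nilpotent with nilpotent differences, invoke Lemma \ref{lem:nilpotent matrices} to force them onto a single line $\F_p N$, and then feed a suitable $\gamma(x-\beta)/p$ into Lemma \ref{lem:lin poly lemma}. The one place the paper is slightly slicker is in the choice of $\gamma$: rather than lifting a left annihilator of $N$, the paper simply takes $\gamma$ to be any nonzero element of $\Delta_\beta$ itself (since $N^2=0$ already kills every scalar multiple of $N$), and takes $\gamma=1$ in the degenerate case---so no extra linear-algebra step or lifting is needed.
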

\begin{proof}
If $S$ is a singleton set, then the result is true by Lemma \ref{lem:basic ringsets}. So, assume that $|S| \geq 2$ and that the odd prime $p$ divides $\Gamma(S)$. Let $\beta \in S$ satisfy the conclusion of Corollary \ref{cor:prime factors} for $p$.

The ring isomorphism $R/pR \cong M_2(\F_p)$ respects minimal polynomials over $\F_p$. Since $S$ is reduced, each element of $\Delta(S)$ has real part 0 and norm divisible by $p$. Hence, the residue of each element of $\Delta(S)$ is nilpotent modulo $p$. Let 
\begin{equation*}
\Delta_\beta = \{\alpha - \beta \mid \alpha \in S, \alpha \ne \beta\} \subseteq \Delta(S).
\end{equation*}
Given $\alpha_1 - \beta, \, \alpha_2 - \beta \in \Delta_\beta$, we have $(\alpha_1-\beta) - (\alpha_2 - \beta) = \alpha_1 - \alpha_2 \in \Delta(S)$. So, all elements in $\Delta_\beta \pmod{p}$ are nilpotent, and have a nilpotent difference. By Lemma \ref{lem:nilpotent matrices}, all the nonzero elements of $\Delta_\beta \pmod{p}$ are $\F_p$-scalar multiples of one another. If $\Delta_\beta \equiv \{0\} \pmod{p}$, then let $\gamma=1$. If not, then there exists $\gamma \in \Delta_\beta$ such that $\gamma \not\equiv 0 \pmod{p}$ and $\gamma(\alpha-\beta) \equiv 0 \pmod{p}$ for all $\alpha \in S$. In either case, the polynomial $\gamma(x-\beta)/p$ is in $\Int(S,R) \setminus R[x]$. By Lemma \ref{lem:lin poly lemma}, we conclude that $S$ is not a ringset.
\end{proof}

We have now proved most cases of the classification theorem for reduced sets that are ringsets.

\begin{Thm}\label{thm:all except 8}
Let $S \subsetneq R$ be reduced.
\begin{enumerate}[(1)]
\item Assume $R = \LQ$. If $S$ is a ringset, then $\Gamma(S) = 2$, $4$, or $8$. If $\Gamma(S) = 2$ or $4$, then $S$ is a ringset.
\item Assume $R = \HQ$. Then, $S$ is a ringset if and only if $\Gamma(S) = 1$ or $2$.
\end{enumerate}
\end{Thm}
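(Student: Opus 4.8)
The plan is to assemble this theorem from the three preceding results, since the genuine work has already been done. The ``sufficient'' directions are immediate: Theorem~\ref{thm:Gamma is 1, 2, or 4} already shows that $\Gamma(S)\in\{1,2\}$ forces $S$ to be a ringset (over either choice of $R$), and that $R=\LQ$ together with $\Gamma(S)=4$ forces $S$ to be a ringset. This gives the ``if'' half of part~(2) and the second sentence of part~(1), with no further argument.

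For the ``necessary'' directions, suppose $S$ is a ringset. First I would clear away the degenerate case: a reduced set lies in some $\mcC_R(m)$ with $m$ quadratic, so $S\cap\Z=\varnothing$; if $|S|=1$ then $S=\{\alpha\}$ with $\alpha\notin\Z$, which is not a ringset by Lemma~\ref{lem:basic ringsets}(1). Hence $|S|\geq2$, and $\Gamma(S)$ is a positive integer because $\Delta(S)$ contains a nonzero quaternion and every nonzero quaternion has positive norm. Now Proposition~\ref{prop:p divides Gamma} rules out any odd prime dividing $\Gamma(S)$, so $\Gamma(S)$ is a power of $2$ (possibly $2^0=1$), and Proposition~\ref{prop:powers of 2 divide Gamma} bounds the exponent. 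When $R=\HQ$ it forbids $4\mid\Gamma(S)$, leaving $\Gamma(S)\in\{1,2\}$ and completing part~(2). When $R=\LQ$ it forbids $16\mid\Gamma(S)$, leaving $\Gamma(S)\in\{1,2,4,8\}$; Remark~\ref{rem:Gamma} then eliminates $\Gamma(S)=1$, since $S\subseteq\LQ$ is reduced and so $\Gamma(S)$ is even. Thus $\Gamma(S)\in\{2,4,8\}$, which is the first sentence of part~(1).

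Honestly there is no real obstacle in this proof --- it is pure bookkeeping built on the earlier propositions. The only places calling for a moment's care are excluding the singleton case (so that $\Gamma(S)\neq0$) and using the parity remark to knock out $\Gamma(S)=1$ in the Lipschitz case. The one genuinely substantive case not settled here, namely $R=\LQ$ with $\Gamma(S)=8$ --- where $S$ is a ringset only under the extra congruence condition appearing in Theorem~\ref{thm:main}(1) --- is deliberately postponed to Section~\ref{sec:Gamma is 8}.
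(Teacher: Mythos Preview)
Your proposal is correct and follows essentially the same route as the paper: invoke Theorem~\ref{thm:Gamma is 1, 2, or 4} for the positive directions, Propositions~\ref{prop:powers of 2 divide Gamma} and~\ref{prop:p divides Gamma} for the negative directions, and Remark~\ref{rem:Gamma} to exclude $\Gamma(S)=1$ when $R=\LQ$. Your explicit treatment of the singleton case is fine but not strictly necessary here, since both Propositions~\ref{prop:powers of 2 divide Gamma} and~\ref{prop:p divides Gamma} already dispose of singletons internally via Lemma~\ref{lem:basic ringsets}.
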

\begin{proof}
Apply Theorem \ref{thm:Gamma is 1, 2, or 4} and Propositions \ref{prop:powers of 2 divide Gamma} and \ref{prop:p divides Gamma}. Note that when $S \subseteq \LQ$, each element of $\Delta(S)$ has even norm (see Remark \ref{rem:Gamma}), so $\Gamma(S) \ne 1$ in this case.
\end{proof}

\section{\texorpdfstring{Reduced Sets $S$ with $\Gamma(S)=8$}{Reduced Sets S with G(S) =8}}\label{sec:Gamma is 8}

Throughout this section, $R = \LQ$. Here, we deal with the exceptional case in which a reduced set $S \subseteq \LQ$ has $\Gamma(S)=8$. In this situation, $S$ may or may not be a ringset.

\begin{Ex}\label{ex:Gamma=8}
Recall that $R=\LQ$, so that $R^\times = \{\pm 1, \, \pm\bfi, \, \pm\bfj, \, \pm\bfk\}$.
\begin{enumerate}[(1)]
\item Let $\alpha = \bfi+\bfj+\bfk$ and let
\begin{equation*}
S = \{u \alpha u^{-1} \mid u \in R^\times\} = \{\bfi+\bfj+\bfk, \; \bfi-\bfj-\bfk, \; -\bfi+\bfj-\bfk, \; -\bfi-\bfj+\bfk\}.
\end{equation*}
Then, $S$ is reduced, $\Gamma(S)=8$, and $S$ is a ringset because it is a full conjugacy class in $R$ (Lemma \ref{lem:basic ringsets}).

\item Let $\alpha_1 = 2\bfi+3\bfj+4\bfk$, $\alpha_2 = -2\bfi+3\bfj+4\bfk$, and $\beta = -5\bfj-2\bfk$. Take $T = \{\alpha_1, \alpha_2, \beta\}$. Then, $T$ is reduced and $\Gamma(T)=8$. Modulo 4, we have
\begin{equation*}
\alpha_1 - \beta \equiv 2\bfi + 2\bfk \equiv \alpha_2 - \beta \pmod{4}.
\end{equation*}
Since $(1+\bfj)(2\bfi+2\bfk) \equiv 0 \pmod{4}$, this shows that $f(x)=(1+\bfj)(x-\beta)/4 \in \Int(T,R)$. However, $(f\bfi)(\beta) = (-4-10\bfi+4\bfj-10\bfk)/4 \notin R$. Thus, $T$ is not a ringset.
\end{enumerate}
\end{Ex}

The key difference between $S$ and $T$ in Example \ref{ex:Gamma=8} appears when we consider $\Delta(S)$ and $\Delta(T)$ modulo 4. We have
\begin{align*}
\Delta(S) &\equiv \{0, \, 2\bfi+2\bfj, \, 2\bfi+2\bfk, \, 2\bfj+2\bfk\} \pmod{4}, \text{ and}\\
\Delta(T) &\equiv \{0, \, 2\bfi+2\bfk\} \pmod{4}.
\end{align*}
Essentially, the fact that $\Delta(T)$ touches only one nonzero residue class modulo 4 makes it possible to construct a polynomial like $f \in \Int(T,R)$ above, and yet have one of $f\bfi$, $f\bfj$, or $f\bfk$ fail to be integer-valued on $T$. In contrast, when $\Delta(S)$ touches more than one nonzero residue class modulo 4, this sort of obstruction does not appear, and we can prove that $S$ is a ringset. The remainder of this section builds up the theory necessary to prove these statements.

\begin{Lem}\label{lem:one class no ringset}
Let $S \subsetneq \LQ$ be reduced. Assume that $\Delta(S) \pmod{4}$ is contained in one of $\{0, \; 2\bfi+2\bfj\}$, $\{0, \; 2\bfi + 2\bfk\}$, or $\{0, \; 2\bfj+2\bfk\}$. Then, $S$ is not a ringset.
\end{Lem}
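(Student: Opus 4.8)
The plan is to use $\Delta(S) \pmod 4$ lying in one of the three two-element sets to produce an explicit polynomial $f \in \Int(S,R)$ together with a unit $u \in \{\bfi, \bfj, \bfk\}$ such that $fu \notin \Int(S,R)$, in direct analogy with Example \ref{ex:Gamma=8}(2). By symmetry under permuting the coordinates $\bfi, \bfj, \bfk$, it suffices to treat the case $\Delta(S) \pmod 4 \subseteq \{0, \, 2\bfi + 2\bfk\}$. Since $S$ is reduced, $\Gamma(S) \ne 1$ by Remark \ref{rem:Gamma}; the hypothesis that all elements of $\Delta(S)$ are $\equiv 0$ or $2\bfi+2\bfk \pmod 4$ forces $N(\delta) \equiv 0 \pmod 4$ for each $\delta \in \Delta(S)$ (as $N(2\bfi+2\bfk) = 8$), and the $\Gamma(S) = 8$ regime is the relevant one, though the argument below really only uses that $\Delta(S) \subseteq 2\LQ$ together with the residue restriction.

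First I would fix the ``anchor'' element. Because $S$ is reduced, Corollary \ref{cor:prime factors} (with $p = 2$) gives $\beta = b_0 + b_1\bfi + b_2\bfj + b_3\bfk \in S$ with some $b_\ell$ odd. I would then exploit the congruence hypothesis to pin down the parities of the imaginary coordinates of $\beta$: for every $\alpha \in S$, $\alpha - \beta \equiv 0$ or $2\bfi + 2\bfk \pmod 4$, and in particular every imaginary coordinate of every $\alpha - \beta$ is even, so all elements of $S$ have imaginary coordinates of the same parity as those of $\beta$; combined with Corollary \ref{cor:prime factors} this shows $b_2$ (the $\bfj$-coordinate) must be even while $b_1$ or $b_3$ is odd — and actually one checks $b_1, b_3$ have the same parity since $\alpha_i - \beta \equiv 2\bfi+2\bfk$ shifts both by $2$, so $b_1 \equiv b_3 \pmod 2$ must be odd and $b_2$ even. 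Next I would build the polynomial: set $\varepsilon = 1 + \bfj$ and $f(x) = \varepsilon(x - \beta)/4$. To see $f \in \Int(S,R)$, note for $\alpha \in S$ that $\alpha - \beta \equiv 0$ or $2\bfi + 2\bfk \pmod 4$, and $\varepsilon(2\bfi + 2\bfk) = (1+\bfj)(2\bfi+2\bfk) = 2\bfi + 2\bfk + 2\bfj\bfi + 2\bfj\bfk = 2\bfi + 2\bfk - 2\bfk + 2\bfi = 4\bfi \equiv 0 \pmod 4$, so $\varepsilon(\alpha - \beta) \in 4\LQ$ and $f(\alpha) \in \LQ$.

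Then I would evaluate $f\bfi$ at $\beta$. Using right evaluation, $(f\bfi)(x) = (\varepsilon\bfi x - \varepsilon\beta\bfi)/4$, so $(f\bfi)(\beta) = \varepsilon(\bfi\beta - \beta\bfi)/4$. From the commutator table in the paper, $\bfi\beta - \beta\bfi = -2b_3\bfj + 2b_2\bfk$, hence $(f\bfi)(\beta) = (1+\bfj)(-2b_3\bfj + 2b_2\bfk)/4 = (-2b_3\bfj + 2b_2\bfk - 2b_3\bfj\bfj + 2b_2\bfj\bfk)/4 = (2b_3 + 2b_2\bfi - 2b_3\bfj + 2b_2\bfk)/4$. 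Since $b_3$ is odd, the real part $2b_3/4 = b_3/2 \notin \Z$, so $(f\bfi)(\beta) \notin \LQ$, whence $f\bfi \notin \Int(S,R)$ and $S$ is not a ringset. The main obstacle — and the step requiring the most care — is the parity bookkeeping for $\beta$: I must be sure that the residue restriction on $\Delta(S)$, together with reducedness via Corollary \ref{cor:prime factors}, genuinely forces an \emph{odd} coordinate among $b_1, b_3$ rather than having the only odd coordinate be $b_2$ (in which case I would instead multiply by a different unit, or use $\varepsilon = 1 + \bfi$ or $1 + \bfk$ adapted to which residue class appears); handling the three residue-class possibilities uniformly, and checking in each that some unit $u$ makes $(fu)(\beta)$ have non-integer real part, is the real content. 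Everything else is the routine commutator computation already tabulated before the lemma.
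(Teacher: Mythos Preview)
Your overall strategy---build $f(x)=\varepsilon(x-\beta)/4$ with $\varepsilon\in\{1+\bfi,1+\bfj,1+\bfk\}$ chosen to annihilate the nonzero residue class, then show some $fu$ with $u\in\{\bfi,\bfj,\bfk\}$ fails at $\beta$---is exactly the paper's approach. The gap is in your parity bookkeeping, which is both incorrect and unnecessary.

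The claim ``$b_2$ must be even while $b_1,b_3$ are odd'' does not follow from the hypotheses. The condition $\Delta(S)\pmod 4\subseteq\{0,\,2\bfi+2\bfk\}$ only says that the \emph{differences} $a_\ell-b_\ell$ are even; it says nothing about the parity of $b_2$ relative to $b_1,b_3$. For instance, $S=\{\bfi+\bfj+\bfk,\,-\bfi+\bfj-\bfk\}$ satisfies the hypothesis with all three imaginary coordinates odd, and $S=\{\bfi+2\bfk,\,\bfi-2\bfk\}$ satisfies it with only $b_1$ odd. Your argument that ``$\alpha-\beta\equiv 2\bfi+2\bfk$ shifts both by $2$, so $b_1\equiv b_3\pmod 2$'' confuses a relation between $a_1$ and $b_1$ (same parity) with a relation between $b_1$ and $b_3$ (no constraint). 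In the second example above, your product $(f\bfi)(\beta)$ actually lands in $\LQ$, so checking only $f\bfi$ is insufficient.

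The paper sidesteps all of this: it simply picks $\beta$ with \emph{some} odd $b_\ell$ (Corollary~\ref{cor:prime factors}) and computes \emph{both} $(f\bfi)(\beta)$ and $(f\bfj)(\beta)$ (in your coordinates, $(f\bfi)(\beta)$ and $(f\bfk)(\beta)$). Between the two products the coefficients $b_1,b_2,b_3$ all appear with denominator $2$, so whichever $b_\ell$ is odd forces one of them out of $\LQ$. No parity case-split on $\beta$ is needed.
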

\begin{proof}
Assume without loss of generality that $\Delta(S) \pmod{4} \subseteq \{0, \; 2\bfi+2\bfj\}$. Then, $(1+\bfk)(\alpha-\beta) \equiv 0 \pmod{4}$ for all $\alpha, \beta \in S$. So, the polynomial $(1+\bfk)(x-\alpha)/4 \in \Int(S,\LQ)$ for all $\alpha \in S$. Since $S$ is reduced, by Corollary \ref{cor:prime factors} there exists $\beta = a+b_1\bfi+b_2\bfj+b_3\bfk \in S$ such that at least one of $b_1$, $b_2$, or $b_3$ is odd. Let $f(x) = (1+\bfk)(x-\beta)/4 \in \Int(S,\LQ)$. Evaluating $(f\bfi)(\beta)$ and $(f\bfj)(\beta)$, we find that
\begin{align*}
(f\bfi)(\beta) &= (1+\bfk)(\bfi \beta - \beta \bfi)/2 = (-2b_2 + 2b_3\bfi - 2b_3\bfj + 2b_2\bfk)/4, \text{ and}\\
(f\bfj)(\beta) &= (1+\bfk)(\bfj \beta - \beta \bfj)/2 = (2b_1 + 2b_3\bfi + 2b_3\bfj - 2b_1\bfk)/4.
\end{align*}
Since at least one of $b_1$, $b_2$, or $b_3$ must be odd, either $f\bfi$ or $f\bfj$ is not in $\Int(S,\LQ)$. Thus, $S$ is not a ringset. 

If, modulo 4, $\Delta(S) \subseteq \{0, \; 2\bfi + 2\bfk\}$ (respectively, $\{0, \; 2\bfj+2\bfk\}$), then we reach the same conclusion by considering $f(x) = (1+\bfj)(x-\beta)/4$ (respectively, $f(x) = (1+\bfi)(x-\beta)/4$). In all cases, $f \in \Int(S,\LQ)$, but one of $f\bfi, f\bfj$, or $f\bfk$ will fail to be in $\Int(S,\LQ)$.
\end{proof}

Lemma \ref{lem:one class no ringset} will be our main tool for showing that some sets $S$ with $\Gamma(S)=8$ are not ringsets. Our next step is to examine the imaginary coefficients of the elements of such a set $S$.

\begin{Lem}\label{lem:Gamma is 8 lem1}
Let $S \subsetneq \LQ$ be reduced and such that $\Gamma(S)=8$. Let $\alpha=a + a_1\bfi + a_2\bfj + a_3\bfk$ and $\beta = a+b_1\bfj+b_2\bfj+b_3\bfk$ be elements of $S$.
\begin{enumerate}[(1)]
\item For all $1 \leq i \leq 3$, $a_i$ and $b_i$ have the same parity.
\item For at least one $i \in \{1,2,3\}$, both $a_i$ and $b_i$ are odd.
\item Either $a_i - b_i \equiv 0 \pmod{4}$ for all $1 \leq i \leq 3$, or $a_i - b_i \equiv 0 \pmod{4}$ for exactly one $i \in \{1, 2, 3\}$.
\end{enumerate}
\end{Lem}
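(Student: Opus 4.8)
The plan is to work entirely with the norm form. Since $S$ is reduced and $\Gamma(S) = 8$, every $\delta \in \Delta(S)$ has real part $0$ and satisfies $N(\delta) \equiv 0 \pmod 8$; moreover, by minimality, $\Gamma(S)$ is exactly $8$, so at least one $\delta_0 \in \Delta(S)$ has $N(\delta_0)$ not divisible by $16$. The whole lemma is then a statement about which residues modulo $4$ the imaginary coefficient vectors can take. I would set $\delta = \alpha - \beta = d_1\bfi + d_2\bfj + d_3\bfk$ with $d_i = a_i - b_i$, so $N(\delta) = d_1^2 + d_2^2 + d_3^2 \equiv 0 \pmod 8$.

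For part (1), I would observe (as in the proofs of Theorem \ref{thm:Gamma is 1, 2, or 4}(3) and Proposition \ref{prop:powers of 2 divide Gamma}, Case 1) that $N(\delta) \equiv 0 \pmod 4$ already forces $d_1^2 + d_2^2 + d_3^2 \equiv 0 \pmod 4$, which over $\Z/4$ is only possible if each $d_i$ is even. Hence $a_i \equiv b_i \pmod 2$ for all $i$, and since the elements of $S$ share a common real part, this shows all elements of $S$ have imaginary coefficients of matching parity pattern. For part (2), note that this common parity pattern cannot be ``all even'': if every $a_i$ were even for every element of $S$, then $S$ (or, translating the real part, $S+a$) would lie in $2\LQ$, contradicting that $S$ is reduced (Corollary \ref{cor:prime factors}, or directly Definition \ref{def:reduced set}(ii)). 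So in the common parity pattern at least one coordinate is odd, and by part (1) that coordinate is odd for both $\alpha$ and $\beta$.

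For part (3) — which I expect to be the main obstacle, though it is still just elementary congruence arithmetic — I would write $d_i = 2e_i$ using part (1), so $N(\delta) = 4(e_1^2 + e_2^2 + e_3^2)$ and the condition $N(\delta) \equiv 0 \pmod 8$ becomes $e_1^2 + e_2^2 + e_3^2 \equiv 0 \pmod 2$, i.e.\ an even number of the $e_i$ are odd. Since $a_i - b_i \equiv 0 \pmod 4$ is equivalent to $e_i$ being even, the number of indices $i$ with $a_i - b_i \not\equiv 0 \pmod 4$ is the number of odd $e_i$, which is $0$ or $2$. Thus the number of indices with $a_i - b_i \equiv 0 \pmod 4$ is $3$ or $1$, which is exactly the dichotomy claimed. (One should double-check the degenerate case $\alpha = \beta$: then all three differences are $0 \pmod 4$, landing in the first alternative, so the statement is vacuously fine; and when $\alpha \ne \beta$ the reduced/$\Gamma(S)=8$ hypotheses guarantee $N(\delta) \ge 8$ with $N(\delta) \equiv 0 \pmod 8$, so the argument applies verbatim.)

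In summary, all three parts reduce to reading off constraints from $d_1^2 + d_2^2 + d_3^2 \pmod{8}$ together with the ``not all even'' consequence of being reduced; no new machinery beyond Corollary \ref{cor:prime factors} is needed. The only mild subtlety is keeping straight that part (1) gives a parity pattern common to \emph{all} of $S$ (so that ``at least one odd'' in part (2) is the same coordinate for every element), but this is immediate once the common real part is used.
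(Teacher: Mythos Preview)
Your proposal is correct and follows essentially the same route as the paper: both arguments read off the constraints from $d_1^2+d_2^2+d_3^2\equiv 0\pmod 8$ together with Corollary~\ref{cor:prime factors}, and your substitution $d_i=2e_i$ in part~(3) is just an explicit rewriting of the paper's contrapositive observation that exactly zero or two indices with $a_i-b_i\equiv 0\pmod 4$ would force $N(\alpha-\beta)\equiv 4\pmod 8$. The remark that some $\delta_0$ has $16\nmid N(\delta_0)$ is never actually needed here (nor does the paper use it), so you can safely drop it.
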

\begin{proof}
Since $\Gamma(S)=8$, we have
\begin{equation}\label{eq:N mod 8}
N(\alpha-\beta) \equiv (a_1-b_1)^2 + (a_2-b_2)^2 + (a_3-b_3)^2 \equiv 0 \pmod{8}.
\end{equation}
First, \eqref{eq:N mod 8} implies that $a_i-b_i$ is even for each $i$. Consequently, some $a_i$ must be odd; if not, then $b_i$ is even for every $1 \leq i \leq 3$ and for every $\beta \in S$, which violates Corollary \ref{cor:prime factors}. This proves (1) and (2).

Next, if $a_i-b_i \equiv 0 \pmod{4}$ for exactly zero or two values of $i \in \{1,2,3\}$, then $N(\alpha-\beta) \equiv 4 \pmod{8}$. This contradicts \eqref{eq:N mod 8}, so (3) holds. 
\end{proof}

\begin{Lem}\label{lem:not a ringset 1}
Let $S \subsetneq \LQ$ be reduced and such that $\Gamma(S)=8$. If there exists $\beta = a+b_1\bfj+b_2\bfj+b_3\bfk \in S$ such that exactly one of $b_1$, $b_2$, or $b_3$ is even, then $S$ is not a ringset. 
\end{Lem}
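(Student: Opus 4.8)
The plan is to show that, under these hypotheses, $\Delta(S) \pmod 4$ is contained in one of the three two-element sets appearing in Lemma~\ref{lem:one class no ringset}, which then yields the conclusion at once. Write $\beta = a + b_1\bfi + b_2\bfj + b_3\bfk$. After relabelling the imaginary units (the cyclic permutation $\bfi \mapsto \bfj \mapsto \bfk \mapsto \bfi$ extends to a ring automorphism of $\LQ$), we may assume $b_3$ is even and $b_1, b_2$ are odd. By Lemma~\ref{lem:Gamma is 8 lem1}(1) this parity pattern propagates across the whole set: every $\alpha = a + a_1\bfi + a_2\bfj + a_3\bfk \in S$ has $a_1, a_2$ odd and $a_3$ even.

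The key step is to pin down the $\bfk$-coefficients modulo $4$. Since $S$ is reduced, $S \subseteq \mcC_R(m)$ for a monic quadratic $m(x) = x^2 - 2ax + n$, so every $\alpha \in S$ satisfies $a_1^2 + a_2^2 + a_3^2 = n - a^2$. Subtracting this identity for two elements $\alpha, \gamma \in S$ (with imaginary coefficients $a_i$ and $c_i$) gives $a_3^2 - c_3^2 = (c_1^2 - a_1^2) + (c_2^2 - a_2^2)$. Since odd squares are $\equiv 1 \pmod 8$, the right-hand side vanishes modulo $8$, so $(a_3 - c_3)(a_3 + c_3) \equiv 0 \pmod 8$; both factors are even and their average $a_3$ is even, and a short parity check then forces $a_3 - c_3 \equiv 0 \pmod 4$. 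Thus every element of $\Delta(S)$ has $\bfk$-coefficient divisible by $4$.

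Finally I combine this with Lemma~\ref{lem:Gamma is 8 lem1}(3): for $\alpha, \gamma \in S$ the difference $\alpha - \gamma$ has all imaginary coefficients even, its $\bfk$-coefficient divisible by $4$, and either all three imaginary coefficients are $\equiv 0 \pmod 4$ or exactly one is. In the latter case the exceptional coordinate must be the $\bfk$-one, forcing the $\bfi$- and $\bfj$-coordinates to be $\equiv 2 \pmod 4$, so $\alpha - \gamma \equiv 2\bfi + 2\bfj \pmod 4$. Hence $\Delta(S) \pmod 4 \subseteq \{0,\, 2\bfi + 2\bfj\}$, and $S$ is not a ringset by Lemma~\ref{lem:one class no ringset}. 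The cases where $b_1$ or $b_2$ is the even coefficient are identical after permuting $\bfi, \bfj, \bfk$, landing in $\{0,\, 2\bfj + 2\bfk\}$ and $\{0,\, 2\bfi + 2\bfk\}$ respectively. There is no serious obstacle here; the only real content is recognizing that the ``single even slot'' hypothesis together with the constant norm collapses $\Delta(S)$ modulo $4$ onto a single nonzero residue class --- exactly the configuration Lemma~\ref{lem:one class no ringset} is built to handle --- and the only thing requiring care is the bookkeeping in propagating the parity pattern of $\beta$ to all of $S$ so that one even slot can be fixed uniformly.
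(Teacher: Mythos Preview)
Your proof is correct and follows essentially the same strategy as the paper: use the equal-norm condition modulo $8$ to show that the single even coordinate is constant modulo $4$ across $S$, then invoke Lemma~\ref{lem:Gamma is 8 lem1}(3) to force $\Delta(S) \pmod 4$ into a two-element set and conclude via Lemma~\ref{lem:one class no ringset}. The only cosmetic differences are your choice of $b_3$ (rather than $b_1$) as the even slot and your direct comparison of arbitrary pairs $\alpha,\gamma\in S$, whereas the paper compares each $\alpha$ to the fixed $\beta$ and then passes to general differences; neither change affects the argument.
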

\begin{proof}
Assume that such a $\beta \in S$ exists. Without loss of generality, assume that $b_1$ is even while $b_2$ and $b_3$ are odd. Let $\alpha=a + a_1\bfi + a_2\bfj + a_3\bfk \in S$.
Note that $a_1$ must be even and each of $a_2$ and $a_3$ must be odd, by Lemma \ref{lem:Gamma is 8 lem1}(1).

We claim that $a_1 \equiv b_1 \pmod{4}$. To see this, consider the norms of $\alpha$ and $\beta$ mod 8. Since $\alpha, \beta \in S$, we have $N(\alpha) = N(\beta)$. Moreover, each of $a_2, a_3, b_2$, and $b_3$ is odd, so $a_2^2 + a_3^2 \equiv 2 \equiv b_2^2+b_3^2 \pmod{8}$. It follows that $a_1^2 \equiv b_1^2 \pmod{8}$. Because both $a_1$ and $b_1$ are even, they must be equivalent modulo 4. Thus,
\begin{equation*}
\alpha - \beta \equiv (a_1-b_1)\bfi + (a_2-b_2)\bfj + (a_3-b_3)\bfk \equiv (a_2-b_2)\bfj + (a_3-b_3)\bfk \pmod{4}.
\end{equation*}
By Lemma \ref{lem:Gamma is 8 lem1}(3), we see that $\alpha - \beta \pmod{4} \in \{0, \; 2\bfj+2\bfk\}$. This holds for all $\alpha \in S$. If $\alpha_1, \alpha_2 \in S$, then $\alpha_1-\alpha_2 = (\alpha_1-\beta) + (\alpha_2-\beta)$. Thus, $\Delta(S) \pmod{4}$ is contained in $\{0, \; 2\bfj+2\bfk\}$, and hence $S$ is not a ringset by Lemma \ref{lem:one class no ringset}.

If $b_2$ (respectively, $b_3$) is even, then $\Delta(S) \pmod{4}$ is contained in $\{0, \; 2\bfi+2\bfk\}$ (respectively, $\{0, \; 2\bfi+2\bfj\}$). So, we reach the same conclusion in those cases.
\end{proof}

\begin{Lem}\label{lem:not a ringset 2}
Let $S \subsetneq \LQ$ be reduced and such that $\Gamma(S)=8$. If there exists $\beta = a+b_1\bfj+b_2\bfj+b_3\bfk \in S$ such that exactly two of $b_1$, $b_2$, or $b_3$ are even, then $S$ is not a ringset. 
\end{Lem}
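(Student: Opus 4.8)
The plan is to imitate the strategy of Lemma \ref{lem:not a ringset 1}: I will show that under this hypothesis, $\Delta(S) \pmod 4$ is forced to lie inside one of the three ``bad'' residue sets $\{0,\,2\bfi+2\bfj\}$, $\{0,\,2\bfi+2\bfk\}$, $\{0,\,2\bfj+2\bfk\}$, and then invoke Lemma \ref{lem:one class no ringset} to conclude that $S$ is not a ringset. Correcting the evident typo, write $\beta = a+b_1\bfi+b_2\bfj+b_3\bfk$, and assume without loss of generality that $b_1$ and $b_2$ are even while $b_3$ is odd. By Lemma \ref{lem:Gamma is 8 lem1}(1), every $\alpha = a+a_1\bfi+a_2\bfj+a_3\bfk \in S$ then has $a_1,a_2$ even and $a_3$ odd, so for each such $\alpha$ the difference $\alpha - \beta$ has the form $(a_1-b_1)\bfi + (a_2-b_2)\bfj + (a_3-b_3)\bfk$ with all three coefficients even.

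The key step is to squeeze more out of the equality $N(\alpha)=N(\beta)$, which holds because $S \subseteq \mcC_R(m)$. Since the real parts agree, this says $a_1^2+a_2^2+a_3^2 = b_1^2+b_2^2+b_3^2$; using that $a_3,b_3$ are odd (so $a_3^2 \equiv b_3^2 \equiv 1 \pmod 8$) and that $a_1,a_2,b_1,b_2$ are even, a short reduction modulo $8$ shows that $a_1-b_1$ and $a_2-b_2$ are congruent modulo $4$ — that is, the $\bfi$- and $\bfj$-coefficients of $\alpha-\beta$ are either both $\equiv 0$ or both $\equiv 2 \pmod 4$. I then combine this with Lemma \ref{lem:Gamma is 8 lem1}(3), which says that either all three of $a_1-b_1,a_2-b_2,a_3-b_3$ are $\equiv 0 \pmod 4$ or exactly one of them is. In the first case $\alpha-\beta \equiv 0 \pmod 4$; in the second case the coefficient that is alone congruent to $0$ cannot be $a_1-b_1$ or $a_2-b_2$ (they move together), so it must be $a_3-b_3$, leaving $\alpha-\beta \equiv 2\bfi+2\bfj \pmod 4$. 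Hence $\alpha-\beta \pmod 4 \in \{0,\,2\bfi+2\bfj\}$ for every $\alpha \in S$, and since any element of $\Delta(S)$ is a difference $(\alpha_1-\beta)-(\alpha_2-\beta)$ of two such elements, $\Delta(S) \pmod 4 \subseteq \{0,\,2\bfi+2\bfj\}$. Lemma \ref{lem:one class no ringset} then gives the result. When $b_2$ (respectively $b_1$) is the odd coefficient, the same computation with the indices permuted places $\Delta(S) \pmod 4$ inside $\{0,\,2\bfi+2\bfk\}$ (respectively $\{0,\,2\bfj+2\bfk\}$), so the conclusion holds in every case.

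The step I expect to require the most care is the norm manipulation in the second paragraph: one must verify cleanly that $N(\alpha)=N(\beta)$ together with the parity information really does force $a_1-b_1 \equiv a_2-b_2 \pmod 4$, and that this dovetails exactly with Lemma \ref{lem:Gamma is 8 lem1}(3) to eliminate every residue class of $\alpha-\beta$ except $0$ and a single $2\bfi+2\bfj$-type class. Once that is in place, the reduction to Lemma \ref{lem:one class no ringset} and the handling of the symmetric cases are routine bookkeeping modulo $4$.
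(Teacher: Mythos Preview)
Your proof is correct and follows essentially the same route as the paper's. Both arguments reduce to showing that $\alpha-\beta \pmod{4}$ lies in $\{0,\,2\bfi+2\bfj\}$ (under the WLOG assumption that $b_1,b_2$ are even and $b_3$ is odd) via the norm equality $N(\alpha)=N(\beta)$ taken modulo $8$, and then invoke Lemma~\ref{lem:one class no ringset}; the only cosmetic difference is that the paper eliminates the residues $2\bfi+2\bfk$ and $2\bfj+2\bfk$ one at a time by contradiction, whereas you prove the single congruence $a_1-b_1\equiv a_2-b_2\pmod 4$ directly and feed it into Lemma~\ref{lem:Gamma is 8 lem1}(3).
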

\begin{proof}
This is similar to the proof of the previous lemma. Assume that such a $\beta \in S$ exists. Without loss of generality, assume that $b_3$ is odd, while $b_1$ and $b_2$ are even. Let $\alpha=a + a_1\bfi + a_2\bfj + a_3\bfk \in S$. By Lemma \ref{lem:Gamma is 8 lem1}(3), the residue of $\alpha-\beta$ modulo 4 must lie in $\{0, \; 2\bfi+2\bfj, \; 2\bfi+2\bfk, \; 2\bfj+2\bfk\}$. We will show that $\alpha - \beta \pmod{4} \in \{0, \; 2\bfi+2\bfj\}$.

Suppose that $\alpha-\beta \equiv 2\bfi+2\bfk \pmod{4}$. So, $a_2 \equiv b_2 \pmod{4}$, and hence $a_2^2 \equiv b_2^2 \pmod{8}$. We have $a_3^2 \equiv b_3^2 \pmod{8}$ because both $a_3$ and $b_3$ are odd. Since $N(\alpha) = N(\beta)$, these equivalences force $a_1^2 \equiv b_1^2 \pmod{8}$. However, this is impossible. Both $a_1$ and $b_1$ are even, and $a_1-b_1 \equiv 2 \pmod{4}$, so $a_1^2 \not\equiv b_1^2 \pmod{8}$. We will reach a similar contradiction if $\alpha-\beta \equiv 2\bfj+2\bfk \pmod{4}$ by examining $a_2$ and $b_2$.

Our work so far shows that $\alpha - \beta \pmod{4} \in \{0, \; 2\bfi+2\bfj\}$. As in Lemma \ref{lem:not a ringset 1}, this implies that $\Delta(S) \pmod{4} \subseteq \{0, \; 2\bfi+2\bfj\}$. Hence, $S$ is not a ringset by Lemma \ref{lem:one class no ringset}. We obtain the same result if $b_1$ (respectively, $b_2$) is odd, because $\Delta(S) \pmod{4} \subseteq \{0, \; 2\bfj+2\bfk\}$ (respectively, $\{0, \; 2\bfi+2\bfk\}$).
\end{proof}

From Lemmas \ref{lem:not a ringset 1} and \ref{lem:not a ringset 2}, we see that if $S$ is reduced, $\Gamma(S) = 8$, and $S$ is a ringset, then it is necessary that every imaginary coefficient of every element of $S$ is odd. Unfortunately, this condition is not sufficient. 

\begin{Ex}\label{ex:all odds}
Let $\alpha = \bfi+\bfj+\bfk$, $\beta = \bfi-\bfj-\bfk$, and $S=\{\alpha, \beta\}$. Then, $S$ is reduced, $\Gamma(S) = 8$ and $\Delta(S) = \{0, \pm(2\bfj+2\bfk)\}$. By Lemma \ref{lem:one class no ringset}, $S$ is not a ringset with respect to $\LQ$. We note, however, that $S$ is a ringset with respect to $\HQ$. In this larger ring, $S$ is not reduced, because $S+1 \subseteq 2\HQ$. Taking $T = \{(1+\bfi+\bfj+\bfk)/2, \, (1+\bfi-\bfj-\bfk)/2\}$, we have $S = -1+2T$. In $\HQ$, $T$ is reduced and $\Gamma(T) = 2$. By Theorem \ref{thm:all except 8} and Lemma \ref{lem:translation}, both $T$ and $S$ are ringsets with respect to $\HQ$.
\end{Ex}

\begin{Thm}\label{thm:Gamma=8 full thm}
Let $S \subsetneq \LQ$ be reduced and such that $\Gamma(S)=8$. Then, $S$ is a ringset if and only if there exist $\delta_1, \delta_2 \in \Delta(S)$ such that $\delta_1$ and $\delta_2$ are congruent modulo 4 to distinct residues in $\{2\bfi+2\bfj, \; 2\bfi+2\bfk, \; 2\bfj+2\bfk\}$.
\end{Thm}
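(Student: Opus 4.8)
The plan is to dispose of the ``only if'' direction immediately from results already available, and to spend the real effort on the ``if'' direction, which I would reduce to controlling the leading coefficient of a linear integer-valued polynomial modulo $2$. For ``only if'', suppose $S$ is a ringset. Since $S$ is reduced with $\Gamma(S)=8$, Lemma \ref{lem:Gamma is 8 lem1}(3) shows that every $\delta\in\Delta(S)$ is congruent modulo $4$ to one of $0$, $2\bfi+2\bfj$, $2\bfi+2\bfk$, $2\bfj+2\bfk$; and by the contrapositive of Lemma \ref{lem:one class no ringset}, $\Delta(S)\pmod 4$ is not contained in $\{0,2\bfi+2\bfj\}$, $\{0,2\bfi+2\bfk\}$, or $\{0,2\bfj+2\bfk\}$. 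Together these force $\Delta(S)\pmod 4$ to meet at least two distinct nonzero residues among the three, and any preimages $\delta_1,\delta_2\in\Delta(S)$ realizing them satisfy the conclusion.

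For the ``if'' direction, assume such $\delta_1,\delta_2$ exist; permuting $\bfi,\bfj,\bfk$ we may take $\delta_1\equiv 2\bfi+2\bfj$ and $\delta_2\equiv 2\bfi+2\bfk\pmod 4$. By Lemma \ref{lem:S star and linear polys}(1) it suffices to prove that every linear $r(x)=\gamma_1 x+\gamma_0\in\Int(S,\LQ)$ lies in $\Int(S^\ast,\LQ)$. First I would record that every imaginary coefficient of every element of $S$ is odd: if not, then by Lemma \ref{lem:Gamma is 8 lem1}(1) the parity pattern is constant on $S$, the all-even pattern is excluded since $S$ is reduced, and the arguments in the proofs of Lemmas \ref{lem:not a ringset 1} and \ref{lem:not a ringset 2} would then confine $\Delta(S)\pmod 4$ to a single class $\{0,\delta\}$, contradicting the existence of $\delta_1$ and $\delta_2$. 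Next, as in the proof of Theorem \ref{thm:Gamma is 1, 2, or 4} we have $8\gamma_1\in\LQ$ for any linear $r\in\Int(S,\LQ)$; writing $\delta_1=2\varepsilon_1$, the residue of $\delta_1$ modulo $4$ gives $N(\delta_1)\equiv 8\pmod{16}$, so $N(\varepsilon_1)=2k$ with $k$ odd, whence $(\gamma_1\delta_1)\overline{\varepsilon_1}=4k\gamma_1\in\LQ$; combining with $8\gamma_1\in\LQ$ and $\gcd(4k,8)=4$ yields $\mu:=4\gamma_1\in\LQ$.

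The crux is then to pin down $\mu$ modulo $2$. Since $\gamma_1\delta_i\in\LQ$ and $\delta_i=2\varepsilon_i$ with $\varepsilon_1\equiv\bfi+\bfj$ and $\varepsilon_2\equiv\bfi+\bfk\pmod{2\LQ}$, we get $\mu\varepsilon_i=2\gamma_1\delta_i\in 2\LQ$, so in the commutative ring $\LQ/2\LQ$ the class $\bar\mu$ annihilates $\bfi+\bfj$ and $\bfi+\bfk$. A short annihilator computation in $\LQ/2\LQ$ gives $\{x:x(\bfi+\bfj)=x(\bfi+\bfk)=0\}=\{0,\,1+\bfi+\bfj+\bfk\}$, and since $(\bfi+\bfj)+(\bfi+\bfk)\equiv\bfj+\bfk\pmod 2$, in fact $\bar\mu$ annihilates each of $\bfi+\bfj,\bfi+\bfk,\bfj+\bfk$. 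Finally, for $\alpha=a+a_1\bfi+a_2\bfj+a_3\bfk\in S$ and $u\in\LQ^\times$: when $u=\pm 1$ there is nothing to check, and when $u\in\{\pm\bfi,\pm\bfj,\pm\bfk\}$ one computes $u\alpha u^{-1}-\alpha=-2\eta$ with $\eta$ one of $a_2\bfj+a_3\bfk$, $a_1\bfi+a_3\bfk$, $a_1\bfi+a_2\bfj$; since the $a_i$ are odd, $\bar\eta\in\{\bfj+\bfk,\bfi+\bfk,\bfi+\bfj\}$, so $\gamma_1(u\alpha u^{-1}-\alpha)=-\tfrac12\mu\eta\in\LQ$ by the annihilator conditions. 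As $r(\alpha)\in\LQ$, this gives $r(u\alpha u^{-1})=r(\alpha)+\gamma_1(u\alpha u^{-1}-\alpha)\in\LQ$, so $r\in\Int(S^\ast,\LQ)$ and $S$ is a ringset.

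I expect the main obstacle to be the combined step producing $\mu=4\gamma_1\in\LQ$ \emph{together with} the tight constraint $\bar\mu\in\{0,\,1+\bfi+\bfj+\bfk\}$. One cannot improve this to $2\gamma_1\in\LQ$ in general---when $\bar\mu=1+\bfi+\bfj+\bfk$ we only obtain $2\gamma_1\in\bfh+\LQ\subseteq\HQ$---so Lemma \ref{lem:S star and linear polys}(2a) does not apply and the explicit conjugation computation above is forced. This is exactly where having two \emph{distinct} residues in $\Delta(S)\pmod 4$ is indispensable: with only one residue available, $\bar\mu$ would be constrained to annihilate only one of $\bfi+\bfj$, $\bfi+\bfk$, $\bfj+\bfk$, which is consistent with a leading coefficient that destroys integer-valuedness on a conjugate, as in the set $T$ of Example \ref{ex:Gamma=8}.
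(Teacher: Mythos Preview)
Your proof is correct and follows essentially the same route as the paper: both directions match, you reduce to linear $r$ via Lemma \ref{lem:S star and linear polys}(1), establish that all imaginary coefficients are odd from the proofs of Lemmas \ref{lem:not a ringset 1} and \ref{lem:not a ringset 2}, obtain $4\gamma_1\in\LQ$, extract $\gamma_1(2\bfi+2\bfj),\gamma_1(2\bfi+2\bfk)\in\LQ$, and then verify $r$ on each conjugate $u\alpha u^{-1}$. The only noteworthy differences are cosmetic: you derive $4\gamma_1\in\LQ$ via $N(\delta_1)\equiv 8\pmod{16}$ and a gcd argument rather than the paper's direct linear-combination trick $4\gamma_1=\tfrac12\sum n_i\gamma_1(\alpha_i-\beta_i)(\overline{\alpha_i-\beta_i}/2)$, and you recast the paper's containments $\gamma_1(2\bfi+2\bfj),\gamma_1(2\bfi+2\bfk)\in\LQ$ as the annihilator condition $\bar\mu\in\{0,1+\bfi+\bfj+\bfk\}$ in $\LQ/2\LQ$, which is a pleasant conceptual repackaging but amounts to the same computation.
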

\begin{proof}
Note that by Lemma \ref{lem:Gamma is 8 lem1}, $\Delta(S) \pmod{4}$ is contained in $\{0, \,2\bfi+2\bfj, \, 2\bfi+2\bfk, \, 2\bfj+2\bfk\}$.

$(\Rightarrow)$ This follows from Lemma \ref{lem:one class no ringset}. 

$(\Leftarrow)$ Assume that the desired $\delta_1$ and $\delta_2$ exist. Without loss of generality, assume that $\delta_1 \equiv 2\bfi+2\bfj$ and $\delta_2 \equiv 2\bfi+2\bfk$. From Lemmas \ref{lem:not a ringset 1} and \ref{lem:not a ringset 2}, we know that every imaginary coefficient of each element of $S$ must be odd.

Let $r(x)=\gamma_1x+\gamma_0 \in \Int(S,\LQ)$. We will show that $r \in \Int(S^\ast,\LQ)$, where 
\begin{equation*}
S^\ast=\{u \alpha u^{-1} \mid \alpha \in S, u \in \LQ^\times\} = \{\alpha, \, -\bfi\alpha\bfi, \, -\bfj\alpha\bfj, \, -\bfk\alpha\bfk\}.
\end{equation*} 
As in the proof of Theorem \ref{thm:Gamma is 1, 2, or 4}(3), $\overline{\alpha-\beta} \in 2\LQ$ for all $\alpha, \beta \in S$.  In analogy with \eqref{eq:Gamma=4}, there exist $t \geq 1$, $n_i \in \Z$ and $\alpha_i, \beta_i \in S$ for $1 \leq i \leq t$, such that
\begin{equation*}
4\gamma_1 = \tfrac{1}{2}(8\gamma_1) = \sum_{i=1}^t n_i\gamma_1(\alpha_i - \beta_i)\Big(\dfrac{\;\overline{\alpha_i-\beta_i}\;}{2}\Big).
\end{equation*}
Thus, $4\gamma_1 \in \LQ$ because $\gamma_1(\alpha_i-\beta_i)$, $\big(\,\overline{\alpha_i-\beta_i}\,\big)/2 \in \LQ$ for each $i$.

Now, for all $\alpha, \beta \in S$, $\gamma_1(\alpha-\beta) = r(\alpha) - r(\beta) \in \LQ$. Thus, $\gamma_1 \delta \in \LQ$ for all $\delta \in \Delta(S)$. Let $\varepsilon_1 \in \LQ$ be such that $\delta_1 = 2\bfi+2\bfj + 4\varepsilon_1$. Since $\gamma_1\delta_1, 4\gamma_1 \in \LQ$, we see that $\gamma_1(2\bfi+2\bfj) \in \LQ$. Similarly, $\gamma_1(2\bfi+2\bfk) \in \LQ$ because $\gamma_1\delta_2 \in \LQ$.

Let $\alpha=a+a_1\bfi+a_2\bfj+a_3\bfk \in S$. Recall that $a_i$ is odd for all $1 \leq i \leq 3$. This means that $2a_i \equiv 2 \pmod{4}$ for each $i$. Let $\alpha_1, \alpha_2 \in \LQ$ be such that
\begin{equation*}
2a_1\bfi+2a_2\bfj = 2\bfi+2\bfj+4\alpha_1 \quad \text{ and } \quad 2a_1\bfi+2a_3\bfk = 2\bfi+2\bfk+4\alpha_2.
\end{equation*}
We have
\begin{align*}
r(\alpha) - r(-\bfk\alpha\bfk) &= \gamma_1(2a_1\bfi+2a_2\bfj) = \gamma_1(2\bfi+2\bfj) + 4\gamma_1\alpha_1 \in \LQ, \text{ and}\\
r(\alpha) - r(-\bfj\alpha\bfj) &= \gamma_1(2a_1\bfi+2a_3\bfk) = \gamma_1(2\bfi+2\bfk) + 4\gamma_1\alpha_2 \in \LQ.
\end{align*}
These equations imply that $r(-\bfk\alpha\bfk), r(-\bfj\alpha\bfj) \in \LQ$. Finally,
\begin{equation*}
r(\alpha) - r(-\bfi\alpha\bfi) = \gamma_1(2a_2\bfj+2a_3\bfk) = \gamma_1(2a_1\bfi+2a_2\bfj) - \gamma_1(2a_1\bfi+2a_3\bfk) + 4\gamma_1a_3\bfk,
\end{equation*}
so $r(-\bfi\alpha\bfi) \in \LQ$ as well. This shows that $r \in \Int(S^\ast,R)$, so $S$ is a ringset by Lemma \ref{lem:S star and linear polys}.
\end{proof}

\section{Closing Remarks and Infinite Subsets}\label{sec:last}

We now have all the tools necessary to determine whether or not a finite subset of $R$ is a ringset.

\begin{Alg}\label{alg:ringset}
Let $S \subseteq R$ be finite and nonempty. To decide if $S$ is a ringset or not, follow these steps.
\begin{enumerate}
\item Partition $S$ as $S = \bigcup_{i=1}^t S_i$, where each $S_i$ is a nonempty subset of a minimal polynomial class $\mcC_R(m_i)$, and each $m_i$ is distinct. By Corollary \ref{cor:min poly decomp}, $S$ is a ringset if and only if each $S_i$ is a ringset.

\item Any $S_i \subseteq \Z$ is a ringset by Lemma \ref{lem:basic ringsets} and can be ignored. If any $S_i \subseteq R \setminus \Z$ is a singleton set, then $S_i$---and hence $S$---is not a ringset.

\item For each $S_i \subseteq R \setminus \Z$ that is not a singleton set, find an associated reduced set $T_i$. Compute $\Gamma(T_i)$ and apply Theorem \ref{thm:main} to determine whether $T_i$ is a ringset. If each $T_i$ is a ringset, then $S$ is a ringset; and if some $T_i$ is not a ringset, then $S$ is not a ringset.
\end{enumerate}
\end{Alg}

The strategy used in Algorithm \ref{alg:ringset} depends heavily on the assumption that $S$ is finite. Recall that by Remark \ref{C_R(m) is finite}, each minimal polynomial class $\mcC_R(m)$ in $R$ is finite. Moreover, there is no guarantee that Theorem \ref{thm:separation} and Corollary \ref{cor:min poly decomp} will hold for an infinite set. As we now demonstrate, there exist sets $S$ with a minimal polynomial partition $S = \bigcup_{i=1}^\infty S_i$ such that each $S_i$ is not a ringset, but $S$ is a ringset. Rather than deal directly with elements of $S$ in $R$, we will focus on calculations in the finite residue rings $R/nR$, where $n \in \Z$, $n \geq 2$.

\begin{Def}\label{def:null ideal}
Let $A$ be a ring and $S \subseteq A$. The \textit{null ideal} of $S$ in $A$ is $\mcN(S,A):=\{f \in A[x] \mid f(\alpha)=0 \text{ for all } \alpha \in S\}$.
\end{Def}

It is straightforward to verify that $\mcN(S,A)$ is a left ideal of $A[x]$. Whether it is a two-sided ideal depends on $S$ and $A$; this is a question that recently has been pursued for rings of matrices over finite fields \cite{RissnerWerner, SwartzWerner, WernerNull}. Our interest in null ideals lies in their connection to integer-valued polynomials.

\begin{Lem}\label{lem:null ideals IVP}
Let $f \in \D[x]$ and write $f(x)$ as $f(x) = g(x)/n$, where $g \in R[x]$ and $n \in \Z$, $n \geq 1$. Let $S \subseteq R$. Use a tilde to denote passage from $R$ to $R/nR$. Then, $f \in \Int(S,R)$ if and only if $\widetilde{g} \in \mcN(\widetilde{S},R/nR)$.
\end{Lem}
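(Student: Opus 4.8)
The plan is to prove both directions of the biconditional by reducing the condition $f(\alpha) \in R$ to a congruence statement in $R/nR$, using the hypothesis that $f(x) = g(x)/n$ with $g \in R[x]$. The key observation is that for any $\alpha \in R$, we have $f(\alpha) = g(\alpha)/n$, and since $\alpha \in R$ and $g \in R[x]$, the element $g(\alpha)$ lies in $R$; thus $f(\alpha) \in R$ if and only if $g(\alpha) \in nR$, i.e.\ if and only if $\widetilde{g}(\widetilde{\alpha}) = \widetilde{0}$ in $R/nR$. The only subtlety here is that evaluation must be done with coefficients on the left (right evaluation), but this is compatible with passage to the quotient: if $g(x) = \sum_i c_i x^i$ with $c_i \in R$, then $g(\alpha) = \sum_i c_i \alpha^i$ maps to $\sum_i \widetilde{c_i}\,\widetilde{\alpha}^i = \widetilde{g}(\widetilde{\alpha})$ under the ring homomorphism $R \to R/nR$, since that homomorphism is additive and multiplicative. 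I would state this explicitly as the crux of the argument.

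First I would fix notation: write $g(x) = \sum_{i=0}^d c_i x^i$ with each $c_i \in R$, and recall that $\widetilde{g}(x) := \sum_{i=0}^d \widetilde{c_i} x^i \in (R/nR)[x]$ is the reduction of $g$ coefficientwise. Then I would note that the natural surjection $\pi \colon R \to R/nR$ is a ring homomorphism, so for every $\alpha \in R$,
\begin{equation*}
\pi(g(\alpha)) = \pi\Big(\sum_{i=0}^d c_i \alpha^i\Big) = \sum_{i=0}^d \pi(c_i)\pi(\alpha)^i = \widetilde{g}(\widetilde{\alpha}).
\end{equation*}

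Next I would carry out the two directions. For the forward direction, suppose $f \in \Int(S,R)$, so $f(\alpha) \in R$ for all $\alpha \in S$; then $g(\alpha) = n f(\alpha) \in nR$, hence $\pi(g(\alpha)) = \widetilde{0}$, so by the displayed identity $\widetilde{g}(\widetilde{\alpha}) = \widetilde{0}$ for all $\alpha \in S$, i.e.\ $\widetilde{g} \in \mcN(\widetilde{S}, R/nR)$. For the converse, suppose $\widetilde{g} \in \mcN(\widetilde{S}, R/nR)$; then for each $\alpha \in S$ the displayed identity gives $\pi(g(\alpha)) = \widetilde{g}(\widetilde{\alpha}) = \widetilde{0}$, so $g(\alpha) \in nR$, and therefore $f(\alpha) = g(\alpha)/n \in R$. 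Since this holds for every $\alpha \in S$, we conclude $f \in \Int(S,R)$.

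Honestly, there is no substantial obstacle here — the statement is essentially a bookkeeping lemma, and the only point requiring any care is making sure that right evaluation of noncommutative polynomials is compatible with the quotient map, which it is because $\pi$ is a genuine ring homomorphism and the monomials $c_i x^i$ already have their coefficients on the left. I would simply be careful to invoke the right-evaluation convention from the introduction so the reader sees why $\pi(g(\alpha)) = \widetilde{g}(\widetilde{\alpha})$ is legitimate, and otherwise keep the proof to a few lines.
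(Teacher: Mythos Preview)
Your proof is correct and follows exactly the same approach as the paper: for each $\alpha \in S$, $f(\alpha) \in R$ iff $g(\alpha) \in nR$ iff $\widetilde{g}(\widetilde{\alpha}) = 0$ in $R/nR$. The paper dispatches this in a single sentence, while you spell out explicitly why $\pi(g(\alpha)) = \widetilde{g}(\widetilde{\alpha})$ via the ring homomorphism $\pi$; that extra care is harmless but not strictly necessary here.
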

\begin{proof}
For each $\alpha \in S$, we have $f(\alpha) \in R$ if and only if $g(\alpha) \in nR$, if and only if $\widetilde{g}(\widetilde{\alpha}) = 0$ in $R/nR$.
\end{proof}

\begin{Lem}\label{lem:mult by units}
Let $S \subseteq R$.
\begin{enumerate}[(1)]
\item Assume $R = \LQ$. Then, $S$ is a ringset if and only if for all $f \in \Int(S,R)$, each of $f\bfi$, $f\bfj$, and $f\bfk$ is in $\Int(S,R)$.
\item Assume $R = \HQ$. Then, $S$ is a ringset if and only if for all $f \in \Int(S,R)$, each of $f\bfi$, $f\bfj$, $f\bfk$, and $f\bfh$ is in $\Int(S,R)$.
\end{enumerate}
\end{Lem}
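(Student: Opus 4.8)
The plan is to reduce both directions to the simple polynomial identity
\begin{equation*}
(fg)(\alpha) = \bigl(f\cdot g(\alpha)\bigr)(\alpha) \qquad \text{for all } f,g \in \D[x],\ \alpha \in \D,
\end{equation*}
where $f\cdot\gamma$ denotes the polynomial obtained by multiplying $f$ on the right by the constant $\gamma \in \D$. First I would record that $R[x] \subseteq \Int(S,R)$: any $h \in R[x]$ has $h(\alpha)\in R$ for every $\alpha \in S \subseteq R$. In particular the constants $\bfi,\bfj,\bfk$ (and $\bfh$ when $R=\HQ$) lie in $\Int(S,R)$. This makes the forward implication immediate: if $S$ is a ringset, then $\Int(S,R)$ is closed under multiplication, so $f\bfi = f\cdot\bfi$, $f\bfj$, $f\bfk$ (and $f\bfh$) are all in $\Int(S,R)$ for every $f \in \Int(S,R)$.

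For the converse, the first step is to upgrade the hypothesis to closure under right multiplication by all of $R$. Since $\Int(S,R)$ is an additive group and, by hypothesis, is closed under right multiplication by $1,\bfi,\bfj,\bfk$ (and $\bfh$ in the Hurwitz case), it is closed under right multiplication by every $\Z$-linear combination of these elements. For $R=\LQ$ those combinations are exactly $\LQ$; for $R=\HQ$ they are exactly $\HQ$, since $\bfk = 2\bfh-1-\bfi-\bfj$ shows $\{1,\bfi,\bfj,\bfh\}$ is already a $\Z$-basis of $\HQ$. Hence $fr \in \Int(S,R)$ for all $f \in \Int(S,R)$ and all $r \in R$.

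The second step is the identity itself. Writing $f = \sum_i a_i x^i$ and $g = \sum_j b_j x^j$ and using that $x$ is central in $\D[x]$, the left-hand side equals $\sum_{i,j} a_i b_j \alpha^{i+j}$; putting $\gamma = g(\alpha) = \sum_j b_j \alpha^j$ and again using centrality of $x$, the right-hand side equals $\sum_i a_i \gamma \alpha^i = \sum_{i,j} a_i b_j \alpha^{j} \alpha^{i}$, and the two agree because powers of the single element $\alpha$ commute. Combining the two steps finishes the proof: given $f,g \in \Int(S,R)$ and $\alpha \in S$, the element $\gamma = g(\alpha)$ lies in $R$, so $f\cdot\gamma \in \Int(S,R)$ by the first step, hence $(f\cdot\gamma)(\alpha) \in R$, and the identity gives $(fg)(\alpha) = (f\cdot\gamma)(\alpha) \in R$. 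As $\alpha \in S$ was arbitrary, $fg \in \Int(S,R)$, so $\Int(S,R)$ is a ring and $S$ is a ringset.

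I do not expect a genuine obstacle here; the only subtle point is conceptual. One should resist analyzing the (generally non-integer) $\D$-coefficients of $g$ directly, and instead isolate the single value $g(\alpha) \in R$ via the identity above — after which everything hinges only on the easily established closure of $\Int(S,R)$ under right multiplication by $R$, which is where the listed units enter and where the distinction between $\LQ$ (units $\bfi,\bfj,\bfk$) and $\HQ$ (also $\bfh$) is used.
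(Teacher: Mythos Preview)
Your proof is correct and follows essentially the same route as the paper. The only difference is that where the paper invokes \cite[Prop.~40]{WernerSurvey} (which states that $S$ is a ringset iff $f\alpha \in \Int(S,R)$ for all $f \in \Int(S,R)$ and $\alpha \in R$), you instead prove the underlying identity $(fg)(\alpha) = \bigl(f\cdot g(\alpha)\bigr)(\alpha)$ directly, making your argument self-contained; the remaining step---expressing every element of $R$ as a $\Z$-linear combination of $1,\bfi,\bfj,\bfk$ (and $\bfh$)---is handled the same way in both proofs, with only a cosmetic difference in how the $\HQ$ case is written (you use that $\{1,\bfi,\bfj,\bfh\}$ is a $\Z$-basis, the paper uses the decomposition $\alpha = a_0+a_1\bfi+a_2\bfj+a_3\bfk+e\bfh$ with $e\in\{0,1\}$).
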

\begin{proof}
(1) By \cite[Prop.\ 40]{WernerSurvey}, $S$ is a ringset if and only if $f\alpha \in \Int(S,R)$ for all $f \in \Int(S,R)$ and all $\alpha \in R$. Given $\alpha \in \LQ$, we have $\alpha = a_0 + a_1\bfi + a_2\bfj + a_3\bfk$ for some $a_i \in \Z$, $0\leq i \leq 3$. Then,
\begin{equation}\label{eq:right mult}
f\alpha = a_0f + a_1f\bfi + a_2f\bfj + a_3f\bfk.
\end{equation}
Clearly, if $S$ is a ringset, then $f\bfi$, $f\bfj$, and $f\bfk$ are all in $\Int(S,\LQ)$. Conversely, if $f\bfi, f\bfj, f\bfk \in \Int(S,\LQ)$, then by \eqref{eq:right mult}, $f\alpha \in \Int(S,\LQ)$ for any choice of integers $a_i$, $0 \leq i \leq 3$. Thus, $S$ is a ringset.

(2) This is identical to the proof of (1) after noting that $\alpha \in \HQ$ can be written as
\begin{equation*}
\alpha = a_0 + a_1\bfi + a_2\bfj + a_3\bfk + e\bfh,
\end{equation*}
where $a_i \in \Z$ for $0 \leq i \leq 3$, and $e \in \{0,1\}$.
\end{proof}

\begin{Prop}\label{prop:infinite ringset}
Let $S \subseteq R$ be such that for all $n \in \Z$, $n \geq 2$, and all $\alpha \in S$, there exists $\beta \in S$ such that 
\begin{enumerate}[(i)]
\item the minimal polynomials of $\alpha$ and $\beta$ are equivalent modulo $n$, and
\item $\alpha - \beta \equiv \pm(\bfi-\bfj) \pmod{n}$.
\end{enumerate}
Then, $S$ is a ringset.
\end{Prop}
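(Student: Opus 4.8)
\emph{Proof idea.} The plan is to prove that $\Int(S,R)$ is closed under multiplication using Lemma~\ref{lem:mult by units}: it suffices to show $fu\in\Int(S,R)$ whenever $f\in\Int(S,R)$ and $u\in\{\bfi,\bfj,\bfk\}$ (and additionally $u=\bfh$ if $R=\HQ$). Clearing denominators, write $f=g/n$ with $g\in R[x]$ and $n\ge 1$; the case $n=1$ is trivial, so assume $n\ge 2$. Since $fu=(gu)/n$, the task is to prove that $(gu)(\alpha)\in nR$ for every $\alpha\in S$. Note first that $S\cap\Z=\varnothing$: an element of $S\cap\Z$ has linear minimal polynomial, whereas at any modulus $\ge 2$ its partner $\beta$ differs from it by a nonzero imaginary quaternion and so has quadratic minimal polynomial, contradicting $\mu_\alpha\equiv\mu_\beta$. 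In particular $\alpha\ne\overline\alpha$ for each $\alpha\in S$.

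The computational core is a commutator identity. Writing $g(x)=\sum_i c_ix^i$ and, for $\alpha\notin\Z$, $m_i(\alpha):=(\alpha^i-\overline\alpha^{\,i})/(\alpha-\overline\alpha)=\sum_{j=0}^{i-1}\alpha^j\overline\alpha^{\,i-1-j}$, one checks that $m_i(\alpha)$ is a rational integer (it is symmetric in $\alpha,\overline\alpha$, hence an integer-coefficient polynomial in $\alpha+\overline\alpha$ and $N(\alpha)$, which both lie in $\Z$), and that $[u,\alpha^i]:=u\alpha^i-\alpha^i u=m_i(\alpha)[u,\alpha]$ for all $i$; the latter follows by telescoping, using that $[u,\alpha]$ anticommutes with the imaginary part of $\alpha$, so that $[u,\alpha]\alpha=\overline\alpha[u,\alpha]$. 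Summing against the $c_i$ gives
\begin{equation*}
(gu)(\alpha)=g(\alpha)\,u+G_\alpha\,[u,\alpha],\qquad G_\alpha:=\sum_i m_i(\alpha)\,c_i .
\end{equation*}
Since $g(\alpha)\in nR$, it remains to show $G_\alpha[u,\alpha]\in nR$.

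The crucial step — and the one I expect to be the main obstacle — is to exploit the hypothesis, applied \emph{at the modulus $2n$} (the extra factor of $2$ is what lets the final divisibility close). Pick $\beta\in S$ with $\mu_\alpha\equiv\mu_\beta$ and $\alpha-\beta\equiv\pm(\bfi-\bfj)$, both modulo $2n$. Since $m_i$ and $t_i(\alpha):=\alpha^i+\overline\alpha^{\,i}$ are integer-coefficient polynomials in $\alpha+\overline\alpha$ and $N(\alpha)$, the congruence of minimal polynomials gives $G_\alpha\equiv G_\beta$ and $L_\alpha\equiv L_\beta\pmod{2nR}$, where $L_\gamma:=\sum_i t_i(\gamma)c_i$. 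Plugging this, together with $g(\alpha),g(\beta)\in nR$, into the identity $g(\gamma)=\tfrac12 L_\gamma+G_\gamma\gamma'$ (where $\gamma'$ denotes the imaginary part of $\gamma$), a short manipulation yields $2G_\alpha(\alpha'-\beta')\in 2nR$; and since the real parts force $\alpha'-\beta'\equiv\pm(\bfi-\bfj)\pmod{2nR}$, we obtain $2G_\alpha(\bfi-\bfj)\in 2nR$, hence $G_\alpha(\bfi-\bfj)\in nR$ because $R$ is torsion-free.

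To finish, it is enough to check that $[u,\alpha]\in(\bfi-\bfj)R$ for every relevant $u$, for then $G_\alpha[u,\alpha]\in G_\alpha(\bfi-\bfj)R\subseteq nR$. When $R=\LQ$, the formulas for $\bfi\alpha-\alpha\bfi$, etc., recorded before Proposition~\ref{prop:powers of 2 divide Gamma} show $[u,\alpha]\in 2\LQ\subseteq(\bfi-\bfj)\LQ$ (as $(\bfi-\bfj)(\bfj-\bfi)=2$), finishing that case. When $R=\HQ$, one verifies that each of $[\bfi,\alpha],[\bfj,\alpha],[\bfk,\alpha],[\bfh,\alpha]$ has even norm — for $\bfh$ because its three imaginary coefficients sum to $0$ — so by Lemma~\ref{lem:difference of conjugates}(2) each lies in the two-sided ideal $I=(1+\bfi)$, and $I=(\bfi-\bfj)\HQ$ (the inclusion $\bfi-\bfj\in I$ is Lemma~\ref{lem:difference of conjugates}(2) since $N(\bfi-\bfj)=2$ is even, and equality holds by a comparison of indices in $\HQ$). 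By Remark~\ref{Disjoint classes} the $\HQ$ computation is run separately for $S\subseteq\LQ$ and $S\subseteq\HQ\setminus\LQ$, the difference being only the bookkeeping of half-integer coefficients. Apart from the divisibility extraction in the previous paragraph, the argument is routine.
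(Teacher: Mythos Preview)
Your proof is correct and follows essentially the same strategy as the paper: reduce via Lemma~\ref{lem:mult by units}, show $(gu)(\alpha)-g(\alpha)u=G_\alpha[u,\alpha]$, deduce $G_\alpha(\bfi-\bfj)\in nR$ from the hypothesis, and finish using $[u,\alpha]\in(\bfi-\bfj)R$. The paper reaches the third step more cleanly by dividing $g$ by $\mu_\alpha$---your $G_\alpha$ is exactly the leading coefficient $\gamma_1$ of the linear remainder $r$---and applying the hypothesis at modulus $n$ rather than $2n$: since $\mu_\alpha(\beta)\equiv 0\pmod n$ one gets $r(\beta)\equiv g(\beta)\equiv 0$, whence $\gamma_1(\alpha-\beta)\equiv 0\pmod n$ at once, with no need for the $L_\gamma$/$G_\gamma$ decomposition or the detour through modulus $2n$.
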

\begin{proof}
Let $f \in \Int(S,R)$ and write $f(x) = g(x)/n$, where $g \in R[x]$ and $n \in \Z$, $n \geq 1$. From Lemmas \ref{lem:null ideals IVP} and \ref{lem:mult by units}, it suffices to show that the images of $g\bfi$, $g\bfj$, $g\bfk$, and $g\bfh$ (if $R = \HQ$) are in the null ideal of the image of $S$ in $R/nR$. This is clear if $n=1$, so assume that $n \geq 2$.

Fix $\alpha \in S$ with minimal polynomial $m$. Let $q, r \in R[x]$ be such that $g=qm+r$ and $r = \gamma_1x + \gamma_0$ for some $\gamma_1, \gamma_0 \in R$. Then, $r(\alpha) = g(\alpha)$, and $g(\alpha) \equiv 0 \pmod{n}$. By assumption, there exists $\beta \in S$ such that $m(\beta) \equiv m(\alpha) \equiv 0 \pmod{n}$ and $\alpha-\beta \equiv \pm(\bfi-\bfj) \pmod{n}$. Since $\beta \in S$, $g(\beta) \equiv 0 \pmod{n}$. Hence, $r(\beta) \equiv 0 \pmod{n}$, and so
\begin{equation}\label{eq:i-j mod n}
0 \equiv r(\alpha) - r(\beta) \equiv \gamma_1(\alpha-\beta) \equiv \pm\gamma_1(\bfi-\bfj) \pmod{n}.
\end{equation}
Since $(\bfi-\bfj)(-\bfi+\bfj)=2$, this implies that $2\gamma_1 \equiv 0 \pmod{n}$. Furthermore, if $R=\HQ$, then $1+\bfi = (\bfi-\bfj)(-\bfj\bfh\bfj)$ and \eqref{eq:i-j mod n} implies that
\begin{equation}\label{eq:j-k k-i mod n}
\gamma_1(1+\bfi) \equiv 0 \pmod{n}.
\end{equation}

Let $u \in \{\bfi, \bfj, \bfk, \bfh\}$. We wish to show that $(gu)(\alpha) \equiv 0 \pmod{n}$. Note that $(gu)(\alpha) = (ru)(\alpha)$, and $r(\alpha)u \equiv 0 \pmod{n}$. So,
\begin{equation*}
(ru)(\alpha) \equiv (ru)(\alpha) - r(\alpha)u \equiv \gamma_1(u\alpha-\alpha u) \pmod{n}.
\end{equation*}
If $R = \LQ$, then $u \in \{\bfi, \bfj, \bfk\}$ and $u\alpha - \alpha u \in 2R$ by Lemma \ref{lem:difference of conjugates}. From \eqref{eq:i-j mod n}, we see that $(ru)(\alpha) \equiv 0 \pmod{n}$. If $R = \HQ$, then $u\alpha - \alpha u \in (1+\bfi)R$, so $(ru)(\alpha) \equiv 0 \pmod{n}$ by \eqref{eq:j-k k-i mod n}. Thus, $(gu)(\alpha) \equiv 0 \pmod{n}$ for each choice of $u$.

The arguments in the previous paragraphs hold for any $\alpha \in S$ and all $u \in \{\bfi, \bfj, \bfk, \bfh\}$. Thus, $f\bfi, f\bfj, f\bfk$ (and $f\bfh$, if necessary) are all in $\Int(S,R)$, and $S$ is a ringset by Lemma \ref{lem:mult by units}.
\end{proof}

We close the paper with the promised example of an infinite ringset such that each set in its minimal polynomial partition is not a ringset.

\begin{Ex}\label{ex:infinite ringset}
For each integer $n \geq 2$, let
\begin{equation*}
T_n := \{a+\bfi \mid n^2-n \leq a \leq n^2-1\} \cup \{a+\bfj \mid n^2\leq a \leq n^2+n-1\}.
\end{equation*}
Let $S = \bigcup_{n=2}^\infty T_n$. Observe that each element of $S$ has the form $a+\bfi$ or $a + \bfj$ for some $a \in \Z$, $a \geq 2$. Furthermore, for each such $a$, there is a unique element of $S$ having real part $a$. Thus, each element of $S$ lies in a distinct minimal polynomial class of $R$. So, $S = \bigcup_{\alpha \in S} \{\alpha\}$ is the minimal polynomial partition of $S$, and each $\{\alpha\}$ is not a ringset by Lemma \ref{lem:basic ringsets}.

Consider $T_n$ reduced modulo $n$. For each $n \geq 2$,
\begin{equation*}
T_n \equiv \{a+\bfi, \, a+\bfj \mid 0 \leq a \leq n-1\} \pmod{n}.
\end{equation*}
Let $\alpha \in S$. Given $n$, there exists $0 \leq a_n \leq n-1$ such that either
\begin{equation*}
\alpha \equiv a_n+\bfi \pmod{n} \quad \text{ or } \quad \alpha \equiv a_n+\bfj \pmod{n}.
\end{equation*}
In either case, there exists $\beta \in T_n$ such that $\alpha - \beta \equiv \pm(\bfi-\bfj) \pmod{n}$ and the minimal polynomials of $\alpha$ and $\beta$ are equivalent modulo $n$. By Proposition \ref{prop:infinite ringset}, $S$ is a ringset.
\end{Ex}

\bibliographystyle{plainurl}
\bibliography{Index}
 
\end{document}